\DeclareSymbolFontAlphabet{\mathbb}{AMSb}
\DeclareSymbolFontAlphabet{\mathbbl}{bbold}
\numberwithin{equation}{section}
\title{Relative Serre functor for comodule algebras}
\author[K.~Shimizu]{Kenichi Shimizu}
\email{kshimizu@shibaura-it.ac.jp}
\address{Department of Mathematical Sciences \\
  Shibaura Institute of Technology \\
  307 Fukasaku, Minuma-ku, Saitama-shi, Saitama 337-8570, Japan.}
\date{}
\newtheorem{counter}{}[section]
\theoremstyle{definition}
\newtheorem{definition}         [counter]{Definition}
\theoremstyle{plain}
\newtheorem{lemma}              [counter]{Lemma}
\newtheorem{claim}              [counter]{Claim}
\newtheorem{proposition}        [counter]{Proposition}
\newtheorem{theorem}            [counter]{Theorem}
\newtheorem{corollary}          [counter]{Corollary}
\newtheorem*{theorem*}          {Theorem}
\theoremstyle{remark}
\newtheorem{remark}             [counter]{Remark}
\newcommand{\id}{\mathrm{id}}
\newcommand{\eval}{\mathrm{ev}}
\newcommand{\coev}{\mathrm{coev}}
\newcommand{\op}{\mathrm{op}}
\newcommand{\rev}{\mathrm{rev}}
\newcommand{\unitobj}{\mathbbl{1}}
\newcommand{\Hom}{\mathrm{Hom}}
\newcommand{\radj}{\mathrm{ra}}
\newcommand{\rradj}{\mathrm{rra}}
\newcommand{\iHom}{\underline{\mathrm{Hom}}}
\newcommand{\iEnd}{\underline{\mathrm{End}}}
\newcommand{\ieval}{\underline{\mathrm{ev}}}
\newcommand{\icoev}{\underline{\mathrm{coev}}}
\newcommand{\icomp}{\underline{\mathrm{com\smash{\mathrm{p}}}}}
\newcommand{\catactl}{\mathbin{\triangleright}}
\newcommand{\catactr}{\mathbin{\triangleleft}}
\newcommand{\DD}{\mathbb{D}} 
\newcommand{\Yone}{\mathbb{Y}}
\newcommand{\Mod}{\mathfrak{M}}
\newcommand{\Ser}{\mathbb{S}}
\newcommand{\Nak}{\mathbb{N}}
\newcommand{\EvalAtOne}{\mathbb{E}}
\newcommand{\itrace}{\underline{\mathrm{tr}}}
\newcommand{\profto}{\relbar\joinrel\mapstochar\joinrel\rightarrow}
\newcommand{\bfk}{\Bbbk} 
\newcommand{\FUN}{\mathrm{Fun}}
\newcommand{\PROF}{\mathrm{Prof}}
\newcommand{\REX}{\mathrm{Rex}}
\newcommand{\LEX}{\mathrm{Lex}}
\newcommand{\Sets}{\mathbf{Set}}
\newcommand{\piv}{\mathrm{piv}}
\newcommand{\Kdelta}{\boldsymbol{\delta}}
\newcommand{\inner}{\mathrm{inn}}
\begin{document}

\begin{abstract}
  Let $\mathcal{C}$ be a finite tensor category, and let $\mathcal{M}$ be an exact left $\mathcal{C}$-module category. The relative Serre functor of $\mathcal{M}$ is an endofunctor $\Ser$ on $\mathcal{M}$ together with a natural isomorphism $\underline{\mathrm{Hom}}(M, N)^* \cong \underline{\mathrm{Hom}}(N, \Ser(M))$ for $M, N \in \mathcal{M}$, where $\underline{\mathrm{Hom}}$ is the internal Hom functor of $\mathcal{M}$. In this paper, we discuss the case where $\mathcal{C}$ and $\mathcal{M}$ are the category of finite-dimensional left modules over a finite-dimensional Hopf algebra $H$ and a finite-dimensional left $H$-comodule algebra $L$, respectively. We give an explicit description of the relative Serre functor of $\mathcal{M}$ and its twisted module structure in terms of the Frobenius structure of $L$. We also study pivotal structures on $\mathcal{M}$ and give some concrete examples.
\end{abstract}
 
\maketitle

\setcounter{tocdepth}{2}
\tableofcontents

\section{Introduction}
\label{sec:introduction}

Finite tensor categories \cite{MR2119143} are a widely studied class of tensor categories including fusion categories and representation categories of finite-dimensional Hopf algebras.
One of the important subjects in this research area is module categories, that is, categories on which finite tensor categories act.
In this paper, we are interested in relative Serre functors for module categories, which was originally introduced by Schaumann under the name of the Serre equivalence in \cite{MR3435098}.
We shall recall its definition:
Let $\mathcal{C}$ be a finite tensor category, and let $\mathcal{M}$ be an exact left $\mathcal{C}$-module category with action $\catactl: \mathcal{C} \times \mathcal{M} \to \mathcal{M}$ (see \cite{MR3242743} or Subsection \ref{subsec:rel-Serre} for the precise definition).
A relative Serre functor of $\mathcal{M}$ is an endofunctor $\Ser$ on $\mathcal{M}$ together with a natural isomorphism
\begin{equation}
  \label{eq:Intro-rel-Serre-1}
  \underline{\mathrm{Hom}}(M, N)^* \cong \underline{\mathrm{Hom}}(N, \Ser(M))
  \quad (M, N \in \mathcal{M}),
\end{equation}
where $\underline{\mathrm{Hom}}: \mathcal{M}^{\op} \times \mathcal{M} \to \mathcal{C}$ is the internal Hom functor of $\mathcal{M}$ and $(-)^*$ is the duality functor (see Section~\ref{sec:rel-Serre} for detail).

We recall that a pivotal structure of $\mathcal{C}$ is an isomorphism $\mathfrak{p} : \id_{\mathcal{C}} \to (-)^{**}$ of tensor functors \cite{MR3242743}.
A pivotal structure of a tensor category is often required in the study of tensor categories and their applications.
For example, a pivotal structure is a prerequisite for the semisimplification construction \cite{MR4486913} of tensor categories.
A ribbon category, which plays a fundamental role in some constructions of topological invariants of knots and 3-manifolds, is necesarily a rigid monoidal category equipped with a braiding and a pivotal structure.

Since the relative Serre functor of $\mathcal{M} = \mathcal{C}$ is the double dual functor $(-)^{**}$, it may be natural to define a pivotal structure of a module category as a `trivialization' of the relative Serre functor. A precise mathematical formulation has been given by Schaumann \cite{MR3019263} as follows: The functor $\Ser$ has a canonical structure
\begin{equation}
  \label{eq:Intro-rel-Serre-2}
   X^{**} \catactl \Ser(M) \to \Ser(X \catactl M)
  \quad (X \in \mathcal{C}, M \in \mathcal{M})
\end{equation}
of a `twisted' $\mathcal{C}$-module functor. If $\mathcal{C}$ has a pivotal structure $\mathfrak{p}$, then we can make $\Ser$ an ordinary $\mathcal{C}$-module functor by composing \eqref{eq:Intro-rel-Serre-2} with $\mathfrak{p}_X \catactl \id_{\Ser(M)}$. A {\em pivotal structure} of $\mathcal{M}$ is an isomorphism $\id_{\mathcal{M}} \to \Ser$ of $\mathcal{C}$-module functors.

As in the case of tensor categories, the existence of a pivotal structure on a module category implies some useful consequences.
Suppose that $\mathcal{M}$ admits a pivotal structure.
Then one can define a well-behaved `module trace' for endomorphisms in $\mathcal{M}$ \cite{MR3943747,MR3435098,MR3019263}.
Furthermore, the dual tensor category of $\mathcal{C}$ with respect to $\mathcal{M}$ also has a pivotal structure \cite[Corollary 5.9]{MR3435098}.
In this paper, we will also show that $\iHom(M, M)$ is a symmetric Frobenius algebra in $\mathcal{C}$ for every $M \in \mathcal{M}$ (see \cite{MR3019263} and Subsection~\ref{subsec:symmetric-Frobenius-algebras}). Hence an exact left $\mathcal{C}$-module category admitting a pivotal structure is equivalent to the category of modules over a symmetric Frobenius algebra in $\mathcal{C}$ (Corollary~\ref{cor:sym-Frobenius-alg}).

In view of these results, we expect that a pivotal structure of an exact module category could be a fundamental tool in the study of finite tensor categories and their module categories. While the relative Serre functor can be expressed by the Nakayama functor and a categorical analogue of the modular function \cite[Theorem 4.26]{MR4042867}, explicit descriptions of the natural isomorphisms~\eqref{eq:Intro-rel-Serre-1} and~\eqref{eq:Intro-rel-Serre-2} are not known in most concrete cases. Hence, in general, it is difficult to determine whether a given module category admits a pivotal structure.

Given an algebra $A$, we denote by ${}_A \Mod$ the category of finite-dimensional left $A$-modules. The main purpose of this paper is to investigate the relative Serre functor in the case where $\mathcal{C} = {}_H \Mod$ and $\mathcal{M} = {}_L \Mod$ for some finite-dimensional Hopf algebra $H$ and some finite-dimensional left $H$-comodule algebra $L$ such that ${}_L \Mod$ is an exact module category over ${}_H \Mod$. We give an explicit description of a relative Serre functor of ${}_L \Mod$ and the natural isomorphisms \eqref{eq:Intro-rel-Serre-1} and \eqref{eq:Intro-rel-Serre-2} in terms of the Frobenius structure of $L$. We also discuss when ${}_L \Mod$ admits a pivotal structure and give some concrete examples of pivotal module categories over ${}_H \Mod$.

\subsection{Organization of this paper}

This paper is organized as follows:
In Section \ref{sec:prelim}, we collect some basic terminology and results related to monoidal categories, their modules categories and internal Hom functors. We also introduce the notion of module profunctors (Definition~\ref{def:C-prof}). This is a useful tool for dealing with adjoints of module functors. 

In Section \ref{sec:rel-Serre}, we recall some results on relative Serre functors from \cite{MR4042867,MR3435098} and add new observations.
Let $\mathcal{C}$ be a finite tensor category, and let $\mathcal{M}$ be an exact left $\mathcal{C}$-module category.
Given two finite left $\mathcal{C}$-module categories $\mathcal{A}$ and $\mathcal{B}$, we denote by $\REX_{\mathcal{C}}(\mathcal{A}, \mathcal{B})$ the category of linear right exact $\bfk$-left $\mathcal{C}$-module functors from $\mathcal{A}$ to $\mathcal{B}$.
It has been noted in the proof of \cite[Theorem 4.25]{MR4042867} that a relative Serre functor of $\mathcal{M}$ is given by the composition
\begin{equation}
  \label{eq:intro-standard-rel-Serre}
  \mathcal{M}
  \xrightarrow{\quad \Yone \quad}
  \REX_{\mathcal{C}}(\mathcal{M}, \mathcal{C})^{\op}
  \xrightarrow{\quad (-)^{\radj} \quad}
  \REX_{\mathcal{C}}(\mathcal{C}, \mathcal{M})
  \xrightarrow{\quad \EvalAtOne \quad}
  \mathcal{M},
\end{equation}
where $\Yone$ is the internal Yoneda functor $\Yone(M) = \iHom(M, -)$, $(-)^{\radj}$ is the functor given by taking right adjoints and $\EvalAtOne(F) = F(\unitobj)$. We call \eqref{eq:intro-standard-rel-Serre} the `standard' relative Serre functor in this paper. We observe that the natural isomorphisms \eqref{eq:Intro-rel-Serre-1} and \eqref{eq:Intro-rel-Serre-2} are written in somewhat explicit form if $\Ser$ is the standard relative Serre functor (Theorem~\ref{thm:mod-str-standard-rel-Serre-1}).

The notion of a pivotal structure of an exact module category (Definition~\ref{def:pivotal-module-cat}) originates from \cite{MR3435098}.
After reviewing known results on a pivotal structure of a module category, we give the following new result: Suppose that $\mathcal{C}$ is pivotal and $\mathcal{M}$ admits a pivotal structure. Then, for every $M \in \mathcal{M}$, the algebra $\iHom(M, M)$ in $\mathcal{C}$ is symmetric Frobenius in the sense of \cite{MR2500035} (Theorem~\ref{thm:sym-Frobenius-alg}).
As a consequence, every pivotal left $\mathcal{C}$-module category is equivalent to the category of modules over a symmetric Frobenius algebra in $\mathcal{C}$ (Corollary~\ref{cor:sym-Frobenius-alg}).

Now let $H$ be a finite-dimensional Hopf algebra. It is known that every finite left module category over $\mathcal{C} := {}_H \Mod$ is equivalent to $\mathcal{M} := {}_L \Mod$ for some finite-dimensional left $H$-comodule algebra $L$ \cite{MR2331768}.
We fix such an algebra $L$ and assume that ${}_L \Mod$ is an exact module category over ${}_H \Mod$.
In Section~\ref{sec:rel-Serre-comod-alg}, we investigate relative Serre functors and natural isomorphisms \eqref{eq:Intro-rel-Serre-1} and \eqref{eq:Intro-rel-Serre-2} in the case where $\mathcal{C} = {}_H \Mod$ and $\mathcal{M} = {}_L \Mod$.
It turns out that $L$ is a Frobenius algebra in this case (Lemma~\ref{lem:exact-comod-alg-Frobenius}).
We discuss what module-theoretic counterparts of categories and functors appearing in \eqref{eq:intro-standard-rel-Serre} are. As a result, when $\mathcal{M}$ is exact, we see that there is a relative Serre functor $\Ser_2: \mathcal{M} \to \mathcal{M}$ such that
\begin{equation*}
  \Ser_2(M) = \Hom_H((H \catactl M)^*, \bfk)
  \quad (M \in \mathcal{M})
\end{equation*}
as a vector space and the natural isomorphisms \eqref{eq:Intro-rel-Serre-1} and \eqref{eq:Intro-rel-Serre-2} are written explicitly for this realization of the relative Serre functor (see Subsection~\ref{subsec:comod-alg-rel-Serre}). The integral theory for Hopf algebras gives an isomorphism $M \cong \Ser_2(M)$ of vector spaces. 
By transporting the structure morphisms via this isomorphism, we obtain a relative Serre functor $\Ser$ of $\mathcal{M}$ with the following properties: As a vector space, $\Ser(M) = M$. The action of $L$ on $\Ser(M)$ is given by twisting the original action of $L$ on $M$ by the automorphism \eqref{eq:twisted-Nakayama} of $L$ written by the Nakayama automorphism of $L$ and the modular function on $H$. The natural isomorphisms \eqref{eq:Intro-rel-Serre-1} and \eqref{eq:Intro-rel-Serre-2} are written explicitly by using the Frobenius system of $L$ (Theorem~\ref{thm:main-theorem}).

Our result is too complicated to be described in Introduction and may not be useful in practical applications. However, we find that our formula of \eqref{eq:Intro-rel-Serre-2} reduces to a simple form if $L$ admits a Frobenius form satisfying a certain equation like the defining formula of cointegrals on Hopf algebras. In this case, for $X \in \mathcal{C}$ and $M \in {}_{L} \Mod$, the isomorphism \eqref{eq:Intro-rel-Serre-2} is given by 
\begin{equation*}
  \Ser(X \catactl M) \to X^{**} \catactl \Ser(M),
  \quad x \otimes m \mapsto \Phi_X(g_L^{-1} g_H^{} x) \otimes m,
\end{equation*}
where $\Phi_X : X \to X^{**}$ is the canonical isomorphism of vector spaces, $g_H \in H$ is the distinguished grouplike element of $H$ (see \eqref{eq:distinguished-grouplike} for our definition), and $g_L \in H$ is a certain grouplike element defined in a similar way as $g_H$ but by using the Frobenius form of $L$ (Theorem~\ref{thm:main-theorem-2}).
We also give some remarks and examples in the case where $L$ is a Hopf subalgebra of $H$.

In Section~\ref{sec:examples}, for some left comodule algebras $L$ given by Mombelli \cite{MR2678630}, we determine whether the relative Serre functor of ${}_L \Mod$ is isomorphic to the identity functor, whether the Nakayama functor of ${}_L \Mod$ is isomorphic to the identity functor, and whether ${}_L \Mod$ has a pivotal structure. Fortunately, the simpler form of the main result, Theorem~\ref{thm:main-theorem-2}, can be applied to these comodule algebras. The results in this section are summarized as Tables \ref{tab:Taft-comod-alg-rel-Serre} and~\ref{tab:book-comod-alg-rel-Serre}.

\subsection*{Acknowledgment}

The author thanks V.~Koppen, G.~Schaumann and T.~Shibata for comments and discussion on earlier version of this paper.
The author also thanks the anonymous referee for careful reading of the manuscript.
This work is supported by JSPS KAKENHI Grant Numbers JP16K17568 and JP20K03520.

\subsection*{Competing interests}

The author declares no conflict of interest.

\section{Preliminaries}
\label{sec:prelim}

In this section, we recall basic terminologies and results on monoidal categories, their modules categories and internal Hom functors.
Throughout this paper, adjoints of module functors are essential.
In Subsection~\ref{subsec:adjunction}, we discuss the uniqueness property of an adjoint functor and introduce the natural isomorphism \eqref{eq:right-adj-uniqueness}, which will be used to define some module functors in the sequel.
We also introduce the notion of module profunctors (Definition~\ref{def:C-prof}) and provide fundamental lemmas on module profunctors, which are useful for dealing with adjoints of module functors.

\subsection{Adjunction}
\label{subsec:adjunction}

Let $\mathcal{A}$ and $\mathcal{B}$ be categories.
An {\em adjunction} \cite[IV.1]{MR1712872} from $\mathcal{A}$ to $\mathcal{B}$ is a quadruple $(F, G, \eta, \varepsilon)$ consisting of two functors $F: \mathcal{A} \to \mathcal{B}$ and $G: \mathcal{B} \to \mathcal{A}$ and two natural transformations $\eta : \id_{\mathcal{A}} \to G F$ and $\varepsilon : F G \to \id_{\mathcal{B}}$, called the {\em unit} and the {\em counit} of the adjunction, respectively, satisfying the zig-zag identities. If $(F, G, \eta, \varepsilon)$ is an adjunction, then the natural transformation
\begin{equation}
  \label{eq:adjunction-iso}
  \Phi_{X,Y}: \Hom_{\mathcal{B}}(F(X), Y) \to \Hom_{\mathcal{A}}(X, G(Y)),
  \quad f \mapsto G(f) \circ \eta_X
\end{equation}
is bijective with the inverse given by $\Phi_{X,Y}^{-1}(g) = \varepsilon_{Y} \circ F(g)$. The natural isomorphism $\Phi$ is called the {\em adjunction isomorphism} for $(F, G, \eta, \varepsilon)$.

Let $F : \mathcal{A} \to \mathcal{B}$ and $G : \mathcal{B} \to \mathcal{A}$ be functors. If $(F, G, \eta, \varepsilon)$ is an adjunction for some $\eta$ and $\varepsilon$, then we write $F \dashv G$ and call the functors $F$ and $G$ a {\em left adjoint} of $G$ and a {\em right adjoint} of $F$, respectively. We note that a right adjoint of a functor is unique up to isomorphism if it exists. Indeed, if both $G$ and $G'$ are right adjoint functors of $F$, then there are natural isomorphism
\begin{equation*}
  \Hom_{\mathcal{A}}(X, G(Y)) \cong \Hom_{\mathcal{B}}(F(X), Y) \cong \Hom_{\mathcal{A}}(X, G'(Y))
  \quad (X \in \mathcal{A}, Y \in \mathcal{B})
\end{equation*}
and hence $G \cong G'$ by the Yoneda lemma. We refer to this fact as the {\em uniqueness} of a right adjoint.

In this paper, we often denote a right adjoint of a functor $F$ by $F^{\radj}$ (provided that it exists). If functors $F$ and $G$ are composable and both of them admit a right adjoint, then we have a canonical isomorphism
\begin{equation}
  \label{eq:right-adj-uniqueness}
  \gamma_{F,G}: F^{\radj} \circ G^{\radj} \to (G \circ F)^{\radj}
\end{equation}
by the uniqueness of a right adjoint. The isomorphism \eqref{eq:right-adj-uniqueness} is `associative' in the following sense: If functors $F$, $G$ and $H$ are composable and all of them admit a right adjoint, then the following diagram is commutative:
\begin{equation}
  \label{eq:right-adj-uniqueness-3}
  \begin{tikzcd}[column sep = 64pt]
    F^{\radj} \circ G^{\radj} \circ H^{\radj}
    \arrow[r, "{F^{\radj} \circ \gamma_{G,H}}"]
    \arrow[d, "{\gamma_{F,G} \circ H^{\radj}}"']
    & F^{\radj} \circ (H \circ G)^{\radj}
    \arrow[d, "{\gamma_{F, H G}}"] \\
    (G \circ F)^{\radj} \circ H^{\radj}
    \arrow[r, "{\gamma_{G F, H}}"]
    & (H \circ G \circ F)^{\radj}
  \end{tikzcd}
\end{equation}

\subsection{Monoidal categories}

A {\em monoidal category} \cite[VII.1]{MR1712872} is a category $\mathcal{C}$ endowed with a functor $\otimes: \mathcal{C} \times \mathcal{C} \to \mathcal{C}$ (called the {\em tensor product}), an object $\unitobj \in \mathcal{C}$ (called the {\em unit object}) and natural isomorphisms
\begin{equation}
  \label{eq:mon-cat-constraints}
  (X \otimes Y) \otimes Z \cong X \otimes (Y \otimes Z)
  \quad \text{and} \quad
  \unitobj \otimes X \cong X \cong X \otimes \unitobj
\end{equation}
for $X, Y, Z \in \mathcal{C}$ satisfying the pentagon axiom and the triangle axiom. A monoidal category $\mathcal{C}$ is said to be {\em strict} if the natural isomorphisms \eqref{eq:mon-cat-constraints} are identities. In view of Mac Lane's strictness theorem \cite[VII.2]{MR1712872}, we assume that all monoidal categories are strict.

\subsection{Duality in a monoidal category}

Let $L$ and $R$ be objects of a monoidal category $\mathcal{C}$, and let $\varepsilon: L \otimes R \to \unitobj$ and $\eta: \unitobj \to R \otimes L$ be morphisms in $\mathcal{C}$. Following \cite{MR3242743}, we say that the triple $(L, \varepsilon, \eta)$ is a {\em left dual object} of $R$ and the triple $(R, \varepsilon, \eta)$ is a {\em right dual object} of $L$ if the equations
\begin{equation*}
  (\varepsilon \otimes \id_L) \circ (\id_L \otimes \eta) = \id_L
  \quad \text{and} \quad
  (\id_R \otimes \varepsilon) \circ (\eta \otimes \id_R) = \id_R
\end{equation*}
hold. If this is the case, then the morphisms $\varepsilon$ and $\eta$ are called the {\em evaluation} and the {\em coevaluation}, respectively.

A monoidal category $\mathcal{C}$ is said to be {\em rigid} if every object of $\mathcal{C}$ has a left dual object and a right dual object. If $\mathcal{C}$ is a rigid monoidal category, then we usually denote a left dual object and a right dual object of $X \in \mathcal{C}$ by $(X^*, \eval_X, \coev_X)$ and $({}^* \! X, \eval'_X, \coev'_X)$, respectively. It is known that the maps $X \mapsto X^*$ and $X \mapsto {}^*X$ extend to monoidal equivalences between $\mathcal{C}^{\rev}$ and $\mathcal{C}^{\op}$, where $\mathcal{C}^{\rev}$ is the category $\mathcal{C}$ equipped with the reversed tensor product $X \otimes^{\rev} Y = Y \otimes X$. By replacing $\mathcal{C}$ with an equivalent one and choosing duals in a suitable way, we may assume that the functors $(-)^*$ and ${}^*(-)$ are strict monoidal functors and mutually inverse to each other.

\subsection{Module categories}

Let $\mathcal{C}$ be a monoidal category. A {\em left $\mathcal{C}$-module category} \cite{MR3242743} is a category $\mathcal{M}$ equipped with a functor $\catactl: \mathcal{C} \times \mathcal{M} \to \mathcal{M}$ (called the {\em action} of $\mathcal{C}$) and natural isomorphisms
\begin{equation}
  \label{eq:mod-cat-assoc}
  (X \otimes Y) \catactl M \cong X \catactl (Y \catactl M)
  \quad \text{and}
  \quad \unitobj \catactl M \cong M
  \quad (X, Y \in \mathcal{C}, M \in \mathcal{M})
\end{equation}
satisfying certain axioms similar to those for monoidal categories.
A right $\mathcal{C}$-module category and a $\mathcal{C}$-bimodule category are defined in an analogous way. There is an analogue of Mac Lane's strictness theorem for $\mathcal{C}$-module categories \cite[Remark 7.2.4]{MR3242743}. Thus, for simplicity, we usually assume that the natural isomorphisms \eqref{eq:mod-cat-assoc} of a left $\mathcal{C}$-module category $\mathcal{M}$ are identity morphisms and write
\begin{equation*}
  (X \otimes Y) \catactl M = X \catactl Y \catactl M = X \catactl (Y \catactl M)
\end{equation*}
for objects $X, Y \in \mathcal{C}$ and $M \in \mathcal{M}$. A similar convention is adopted for right module categories and bimodule categories.

We note that the exactness of the action is usually required when, for example, both $\mathcal{C}$ and $\mathcal{M}$ are abelian categories ({\it cf}. \cite{MR3934626,MR3242743}).
The exactness of the action is not imposed in this section for the sake of generality.

Let $\mathcal{M}$ and $\mathcal{N}$ be left $\mathcal{C}$-module categories. A {\em lax left $\mathcal{C}$-module functor} from $\mathcal{M}$ to $\mathcal{N}$ is a pair $(F, s)$ consisting of a functor $F: \mathcal{M} \to \mathcal{N}$ and a natural transformation
\begin{equation*}
  s_{X,M}: X \catactl F(M) \to F(X \catactl M)
  \quad (X \in \mathcal{C}, M \in \mathcal{M})
\end{equation*}
such that the equations
\begin{equation}
  \label{eq:lax-C-module-functor}
  s_{\unitobj, M} = \id_{F(M)}
  \quad \text{and} \quad
  s_{X \otimes Y, M} = s_{X, Y \catactl M} \circ (\id_X \catactl s_{Y, M})
\end{equation}
hold for all objects $X, Y \in \mathcal{C}$ and $M \in \mathcal{M}$. An {\em oplax left $\mathcal{C}$-module functor} from $\mathcal{M}$ to $\mathcal{N}$ is a pair $(F, s)$ consisting of a functor $F: \mathcal{M} \to \mathcal{N}$ and a natural transformation $s: F(X \catactl M) \to X \catactl F(M)$ ($X \in \mathcal{C}$, $M \in \mathcal{M}$) satisfying equations analogous to \eqref{eq:lax-C-module-functor}. We omit the definitions of morphisms of (op)lax module functors; see \cite{MR3934626} for details.

An (op)lax left $\mathcal{C}$-module functor $(F, s)$ is said to be {\em strong} if the natural transformation $s$ is invertible. If $\mathcal{C}$ is rigid, then every (op)lax left $\mathcal{C}$-module functor is strong \cite[Lemma 2.10]{MR3934626} and hence an (op)lax $\mathcal{C}$-module functor may simply be called a $\mathcal{C}$-module functor. Nevertheless, the adjective `(op)lax' will sometimes be used in the case where $\mathcal{C}$ is rigid to clarify the natural direction of the structure morphism.

\subsection{Module profunctors}

In this paper, we deal with some functors defined as an adjoint of a module functor. For this purpose, it is convenient to introduce the notion of a {\em module profunctor}. Recall that a profunctor $T: \mathcal{M} \profto \mathcal{N}$ is just a functor $T: \mathcal{N}^{\op} \times \mathcal{M} \to \Sets$. By the Yoneda lemma, we may identify an ordinary functor $F: \mathcal{M} \to \mathcal{N}$ with a profunctor
\begin{equation}
  \label{eq:functor-to-prof}
  H_F: \mathcal{M} \profto \mathcal{N},
  \quad (N, M) \mapsto \Hom_{\mathcal{N}}(N, F(M)).
\end{equation}
Inspired by the notion of a Tambara module introduced by Pastro and Street \cite{MR2425558}, we now introduce the following definition:

\begin{definition}
  \label{def:C-prof}
  Let $\mathcal{C}$ be a monoidal category, and let $\mathcal{M}$ and $\mathcal{N}$ be left $\mathcal{C}$-module categories. A (left) {\em $\mathcal{C}$-module profunctor} from $\mathcal{M}$ to $\mathcal{N}$ is a pair $(T, \theta)$ consisting of a profunctor $T: \mathcal{M} \profto \mathcal{N}$ and a (di-)natural transformation
  \begin{equation*}
    \theta_{X,M,N}: T(N,M) \to T(X \catactl N, X \catactl M)
    \quad (X \in \mathcal{C}, N \in \mathcal{N}, M \in \mathcal{M})
  \end{equation*}
  (strictly speaking, a family of morphisms that is natural in the variables $N$ and $M$ and dinatural \cite[IX.4]{MR1712872} in the variable $X$) such that the equations
  \begin{gather}
    \label{eq:C-prof-str-axiom-1}
    \theta_{\unitobj, N, M} = \id_{T(N, M)}, \\
    \label{eq:C-prof-str-axiom-2}
    \theta_{X \otimes Y, N, M} = \theta_{X, Y \catactl N, Y \catactl M} \circ \theta_{Y, N, M}
  \end{gather}
  hold for all $X, Y \in \mathcal{C}$, $N \in \mathcal{N}$ and $M \in \mathcal{M}$. Given $\mathcal{C}$-module profunctors $(S, \sigma)$ and $(T, \theta)$ from $\mathcal{M}$ to $\mathcal{N}$, a {\em morphism} from $(S, \sigma)$ to $(T, \theta)$ is a natural transformation $\phi: S \to T$ such that the equation
  $\phi_{X \catactl N, X \catactl M} \circ \sigma_{X,N,M} = \theta_{X,N,M} \circ \phi_{N,M}$
  hold for all $X \in \mathcal{C}$, $N \in \mathcal{N}$ and $M \in \mathcal{M}$.
\end{definition}

We give basic properties of module profunctors. We do not consider the `composition' of module profunctors in this paper, but we shall remark that one can compose a $\mathcal{C}$-module profunctor and (op)lax $\mathcal{C}$-module functors in the following way: Let $\mathcal{M}_i$ and $\mathcal{N}_i$ be left $\mathcal{C}$-module categories ($i = 1, 2$). If $(F, r): \mathcal{N}_1 \to \mathcal{N}_2$ is an oplax $\mathcal{C}$-module functor, $(G, s): \mathcal{M}_1 \to \mathcal{M}_2$ is a lax $\mathcal{C}$-module functor and $(T, \theta): \mathcal{M}_2 \profto \mathcal{N}_2$ is a $\mathcal{C}$-module profunctor, then the functor
\begin{equation*}
  S: \mathcal{N}_1^{\op} \times \mathcal{M}_1 \to \Sets,
  \quad (N_1, M_1) \mapsto T(F(N_1), G(M_1))
\end{equation*}
is a $\mathcal{C}$-module profunctor $\mathcal{M}_1 \profto \mathcal{N}_1$ by the structure map
\begin{equation*}
  T(r_{X,N}, s_{X,M}) \circ \theta_{X,N,M}:
  S(N, M)
  \to S(X \catactl N, X \catactl M)
\end{equation*}
for $X \in \mathcal{C}$, $N \in \mathcal{N}_1$ and $M \in \mathcal{M}_1$.

Now let $\mathcal{M}$ and $\mathcal{N}$ be left $\mathcal{C}$-module categories. The `identity' profunctor $\Hom_{\mathcal{N}}: \mathcal{N} \profto \mathcal{N}$ is a $\mathcal{C}$-module profunctor by the structure map given by
\begin{equation*}
  \Hom_{\mathcal{N}}(N, N') \to \Hom_{\mathcal{N}}(X \catactl N, X \catactl N'),
  \quad f \mapsto \id_X \catactl f
\end{equation*}
for $X \in \mathcal{C}$ and $N, N' \in \mathcal{N}$. Thus, if $F: \mathcal{M} \to \mathcal{N}$ is a lax left $\mathcal{C}$-module functor, then the profunctor $H_F: \mathcal{M} \profto \mathcal{N}$ defined by~\eqref{eq:functor-to-prof} is a $\mathcal{C}$-module profunctor as the composition of $\Hom_{\mathcal{N}}$ and $F$. This construction gives rise to a functor
\begin{equation}
  \label{eq:mod-prof-construction-1}
  \FUN_{\mathcal{C}}(\mathcal{M}, \mathcal{N}) \to \PROF_{\mathcal{C}}(\mathcal{M}, \mathcal{N}),
  \quad F \mapsto H_F,
\end{equation}
where $\FUN_{\mathcal{C}}(\mathcal{M}, \mathcal{N})$ and $\PROF_{\mathcal{C}}(\mathcal{M}, \mathcal{N})$ are the category of lax left $\mathcal{C}$-module functors and the category of $\mathcal{C}$-module profunctors from $\mathcal{M}$ to $\mathcal{N}$, respectively. By a Yoneda-type argument, it is routine to prove:

\begin{lemma}
  The functor~\eqref{eq:mod-prof-construction-1} is fully faithful.
\end{lemma}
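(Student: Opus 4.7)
The plan is to reduce everything to the classical Yoneda lemma for profunctors and then check that the extra module-profunctor condition~\eqref{eq:C-prof-morph} is equivalent to the condition defining morphisms of lax module functors. Let $(F, s^F)$ and $(G, s^G)$ be lax left $\mathcal{C}$-module functors from $\mathcal{M}$ to $\mathcal{N}$, and unpack the module-profunctor structure on $H_F$: tracing through the composition of the identity profunctor with $F$, one finds
\begin{equation*}
  \theta^F_{X,N,M}(f) = s^F_{X,M} \circ (\id_X \catactl f)
\end{equation*}
for $f \in \Hom_{\mathcal{N}}(N, F(M))$, and similarly for $\theta^G$.

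First, I would invoke the ordinary Yoneda lemma (applied pointwise in $M$) to the profunctor $H_F = \Hom_{\mathcal{N}}(-, F(-))$: a natural transformation of profunctors $\phi: H_F \to H_G$, being a family of maps natural in $N$, is uniquely of the form $\phi_{N,M}(f) = \alpha_M \circ f$ for a unique family $\alpha_M: F(M) \to G(M)$; naturality of $\phi$ in $M$ is equivalent to naturality of $\alpha$ in $M$. This already gives a bijection between natural transformations $H_F \to H_G$ of (plain) profunctors and natural transformations $F \to G$ of (plain) functors, which is the faithful/fully-faithful part at the level of underlying categories.

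Next, I would translate the module-profunctor morphism axiom~\eqref{eq:C-prof-morph} via this bijection. Substituting $\phi_{N,M}(f) = \alpha_M \circ f$ into~\eqref{eq:C-prof-morph} yields
\begin{equation*}
  \alpha_{X \catactl M} \circ s^F_{X,M} \circ (\id_X \catactl f)
  = s^G_{X,M} \circ (\id_X \catactl \alpha_M) \circ (\id_X \catactl f)
\end{equation*}
for all $N$, $M$, $X$ and all $f: N \to F(M)$. Specializing to $N = F(M)$, $f = \id_{F(M)}$ (and observing the general case follows by post-composition) shows this is equivalent to the single identity
\begin{equation*}
  \alpha_{X \catactl M} \circ s^F_{X,M} = s^G_{X,M} \circ (\id_X \catactl \alpha_M),
\end{equation*}
which is precisely the defining condition for $\alpha$ to be a morphism of lax left $\mathcal{C}$-module functors.

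The only mild subtlety I anticipate is the bookkeeping in the previous step, namely confirming that the single specialization $(N, f) = (F(M), \id)$ really captures the full content of~\eqref{eq:C-prof-morph} (and, dually, that~\eqref{eq:C-prof-morph} does follow from the module-functor identity for arbitrary $N$ and $f$ by functoriality). Once this is in place, combining the two steps gives the bijection between morphisms $H_F \to H_G$ in $\PROF_{\mathcal{C}}(\mathcal{M}, \mathcal{N})$ and morphisms $F \to G$ in $\FUN_{\mathcal{C}}(\mathcal{M}, \mathcal{N})$, proving that~\eqref{eq:mod-prof-construction-1} is fully faithful.
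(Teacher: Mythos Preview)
Your argument is correct and is precisely the routine Yoneda-type argument the paper alludes to (the paper omits the proof entirely, saying only ``By a Yoneda-type argument, it is routine to prove''). The specialization $(N,f)=(F(M),\id_{F(M)})$ indeed captures the full content of~\eqref{eq:C-prof-morph}, and the converse follows by precomposing with $\id_X\catactl f$, exactly as you note.
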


The following lemma is important:

\begin{lemma}
  \label{lem:representable-module-profunctors}
  Let $\mathcal{M}$ and $\mathcal{N}$ be as above, and let $F: \mathcal{M} \to \mathcal{N}$ be a functor.
  \begin{enumerate}
  \item There is a one-to-one correspondence between lax left $\mathcal{C}$-module functor structures on $F$ and $\mathcal{C}$-module profunctor structures on $H_F$.
  \item There is a one-to-one correspondence between oplax left $\mathcal{C}$-module functor structures on $F$ and $\mathcal{C}$-module profunctor structures on the profunctor
    \begin{equation*}
      \tilde{H}_F: \mathcal{N} \profto \mathcal{M},
      \quad (M, N) \mapsto \Hom_{\mathcal{N}}(F(M), N).
    \end{equation*}
  \end{enumerate}
\end{lemma}
\begin{proof}
  We prove Part (1). Suppose that we are given a natural transformation $\theta$ making $T := H_F$ a $\mathcal{C}$-module profunctor. For $X \in \mathcal{C}$ and $M \in \mathcal{M}$, we define $s_{X,M}$ to be the image of $\id_{F(M)}$ under the map
  \begin{equation*}
    \theta_{X, F(M), M}: T(F(M), M) \to T(X \catactl F(M), X \catactl M).
  \end{equation*}
  By the naturality of $\theta_{X,N,M}$ in the variable $N \in \mathcal{N}$, we have
  \begin{equation}
    \label{eq:C-prof-lax-C-func:pf:1}
    \theta_{X,N,M}(f)
    = s_{X,M} \circ (\id_X \catactl f)
  \end{equation}
  for all $X \in \mathcal{C}$, $M \in \mathcal{M}$, $N \in \mathcal{N}$ and $f \in T(N, M)$. Using this formula and \eqref{eq:C-prof-str-axiom-1}, we can verify the first equation of \eqref{eq:lax-C-module-functor}. We also find that~\eqref{eq:C-prof-str-axiom-2} implies the second equation of \eqref{eq:lax-C-module-functor}. The naturality of $\theta$ implies the naturality of $s = \{ s_{X,M} \}$. Hence $(F, s)$ is a lax $\mathcal{C}$-module functor.

  Conversely, if $(F, s)$ is a lax $\mathcal{C}$-module functor, then we define $\theta$ by~\eqref{eq:C-prof-lax-C-func:pf:1}. It is routine to show that $(T, \theta)$ is indeed a $\mathcal{C}$-module profunctor. These constructions are mutually inverse. Now the proof of Part (1) is done. Part (2) can be proved in a similar way.
\end{proof}

The above lemma yields the following result:

\begin{lemma}[{{\it cf}. \cite[Lemma 2.11]{MR3934626}}]
  \label{lem:module-functor-adjunction}
  Let $F: \mathcal{N} \to \mathcal{M}$ be a functor, and suppose that it has a right adjoint $G$. If $(F, r)$ is an oplax $\mathcal{C}$-module functor, then $G$ has a unique structure of a lax $\mathcal{C}$-module functor such that the diagram
  \begin{equation}
    \label{eq:C-prof-adj-2}
    \begin{tikzcd}[column sep = 96pt]
      \Hom_{\mathcal{M}}(F(N), M)
      \arrow[r, "{f \, \mapsto \, (\id_{X} \catactl f) \circ r_{X,N}}"]
      \arrow[d, "{\Phi_{N,M}}"']
      & \Hom_{\mathcal{M}}(F(X \catactl N), X \catactl M)
      \arrow[d, "{\Phi_{X \catactl N, X \catactl M}}"] \\
      \Hom_{\mathcal{N}}(N, G(M))
      \arrow[r, "{g \, \mapsto \, s_{X,M} \circ (\id_{X} \catactl g)}"]
      & \Hom_{\mathcal{N}}(X \catactl N, G(X \catactl M))
    \end{tikzcd}
  \end{equation}
  commutes for all objects $M \in \mathcal{M}$, $N \in \mathcal{N}$ and $X \in \mathcal{C}$, where $\Phi$ is the adjunction isomorphism \eqref{eq:adjunction-iso} and $s$ is the structure morphism of $G$.
  Similarly, if $(G, s)$ is a lax $\mathcal{C}$-module functor, then $F$ has a unique structure $r$ of an oplax $\mathcal{C}$-module functor such that the diagram \eqref{eq:C-prof-adj-2} commutes.
\end{lemma}
\begin{proof}
  Suppose that $(F, r)$ is an oplax $\mathcal{C}$-module functor.
  Then, by Lemma~\ref{lem:representable-module-profunctors}, $G$ has a unique structure of a lax $\mathcal{C}$-module structure such that the adjunction isomorphism $\Phi$ is an isomorphism of $\mathcal{C}$-module profunctors.
  The proof of the first half part is done by noting that the commutativity of \eqref{eq:C-prof-adj-2} is equivalent to that $\Phi$ is a morphism of $\mathcal{C}$-module profunctors.
  The latter half part is proved in a similar way.
\end{proof}

The following construction will be used frequently:

\begin{lemma}
  \label{lem:C-mod-func-adj}
  If a strong $\mathcal{C}$-module functor $F : \mathcal{N} \to \mathcal{M}$ has a right adjoint $G$, then $G$ has a unique structure of a lax $\mathcal{C}$-module functor such that the unit and the counit of $F \dashv G$ are morphisms of lax left $\mathcal{C}$-module functors.
\end{lemma}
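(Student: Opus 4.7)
Existence is already essentially established in the paragraph immediately preceding the lemma. Apply the preceding lemma to the oplax $\mathcal{C}$-module functor $(F, r)$, where $r_{X,N}: F(X \catactl N) \to X \catactl F(N)$ denotes the (invertible) strong module structure of $F$. This produces a lax $\mathcal{C}$-module structure $s$ on $G$ for which the adjunction isomorphism \eqref{eq:C-prof-adj-1} is an isomorphism of $\mathcal{C}$-module profunctors. The equations \eqref{eq:C-prof-adj-3} and \eqref{eq:C-prof-adj-4} derived from that isomorphism unfold, in view of $F^{\mathrm{lax}} = (F, r^{-1})$, to precisely the axioms that $\eta$ and $\varepsilon$ are morphisms of lax $\mathcal{C}$-module functors; so such a structure $s$ exists.

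For uniqueness, suppose $s'$ is any lax $\mathcal{C}$-module structure on $G$ for which $\varepsilon: F G \to \id_{\mathcal{M}}$ is a morphism of lax $\mathcal{C}$-module functors. Unpacking this morphism condition (with the composite lax structure on $FG$ coming from $F^{\mathrm{lax}}$ and $(G, s')$) gives
\[
  \varepsilon_{X \catactl M} \circ F(s'_{X, M}) = (\id_X \catactl \varepsilon_M) \circ r_{X, G(M)}
\]
for all $X \in \mathcal{C}$ and $M \in \mathcal{M}$. Apply $G$ to both sides and precompose with $\eta_{X \catactl G(M)}$. Using the naturality square
\[
  G F(s'_{X, M}) \circ \eta_{X \catactl G(M)} = \eta_{G(X \catactl M)} \circ s'_{X, M}
\]
and the triangle identity $G(\varepsilon_{X \catactl M}) \circ \eta_{G(X \catactl M)} = \id_{G(X \catactl M)}$, the left-hand side collapses to $s'_{X, M}$, producing the closed formula
\[
  s'_{X, M} = G\bigl( (\id_X \catactl \varepsilon_M) \circ r_{X, G(M)} \bigr) \circ \eta_{X \catactl G(M)}.
\]
The right-hand side is built only from $F$, $r$, and the adjunction data, and in particular does not involve $s'$, so $s'$ must coincide with the $s$ furnished by the existence argument.

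The only step that requires care is the identification of the compatibility equations \eqref{eq:C-prof-adj-3}–\eqref{eq:C-prof-adj-4} with the lax-morphism axioms for $\eta$ and $\varepsilon$ relative to the composite lax structures on $GF$ and $FG$; once this bookkeeping is in place, both halves of the statement are brief diagram chases powered by the standard mate correspondence under the adjunction $F \dashv G$. The condition on $\eta$ alone (or on $\varepsilon$ alone) already forces the mate formula above, so the two morphism conditions are in fact redundant — either one suffices for uniqueness, which is a convenient sanity check.
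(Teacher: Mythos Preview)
Your proof is correct and follows the paper's approach for existence: the paper's argument is exactly the paragraph preceding the lemma, which applies the previous lemma to the oplax structure $(F,r)$ and then reads off from \eqref{eq:C-prof-adj-3}--\eqref{eq:C-prof-adj-4} that $\eta$ and $\varepsilon$ are lax module functor morphisms. Your explicit uniqueness argument via the mate formula is a welcome addition, since the paper simply writes ``Summarizing, we have:'' and does not spell out uniqueness separately (it is implicitly contained in the bijection of Lemma~\ref{lem:C-Prof-Yoneda}, but your direct derivation from the $\varepsilon$-condition alone is cleaner and makes the redundancy of the two conditions transparent).
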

\begin{proof}
  Let $q_{X, N} : X \catactl F(N) \to F(X \catactl N)$ be the structure morphism of $F$, and let $\eta$ and $\varepsilon$ be the unit and the counit of the adjunction $F \dashv G$, respectively.
  Suppose that $G$ is a lax left $\mathcal{C}$-module functor with structure morphism $s$. We note that the unit $\eta$ is a morphism of lax $\mathcal{C}$-module functor if and only if the equation
  \begin{equation}
    \label{eq:C-mod-func-adj-proof-1}
    \eta_{X \catactl N}
    = G(q_{X, M}) \circ s_{X, F(N)} \circ (\id_X \catactl \eta_N)    
  \end{equation}
  holds for all $X \in \mathcal{C}$ and $N \in \mathcal{N}$.
  We also note that the counit $\varepsilon$ is a morphism of lax $\mathcal{C}$-module functor if and only if the equation
  \begin{equation}
    \label{eq:C-mod-func-adj-proof-2}
    \varepsilon_{X \catactl M} \circ F(s_{X,M}) \circ q_{X,G(M)}
    = \id_X \catactl \varepsilon_M
  \end{equation}
  holds for all $X \in \mathcal{C}$ and $M \in \mathcal{M}$.
  Thus, if \eqref{eq:C-mod-func-adj-proof-2} holds, then we have
  \begin{equation*}
    s_{X,M} = \Phi_{X \catactl G(M), X \catactl M}((\id_X \catactl \varepsilon_M) \circ q_{X,G(M)}^{-1})
  \end{equation*}
  for $X \in \mathcal{C}$ and $M \in \mathcal{M}$, where $\Phi$ is the adjunction isomorphism \eqref{eq:adjunction-iso}.
  This proves the uniqueness part of this lemma.

  Now we prove that $G$ has a structure of a lax $\mathcal{C}$-module functor as stated.
  By applying Lemma \ref{lem:module-functor-adjunction} to the oplax $\mathcal{C}$-module functor $(F, r)$ with $r = q^{-1}$, we find a natural transformation $s$ making $G$ a lax $\mathcal{C}$-module functor such that the diagram \eqref{eq:C-prof-adj-2} commutes.
  Assuming $M = F(N)$ and chasing the identity morphism $\id_{F(N)}$ around the diagram \eqref{eq:C-prof-adj-2}, we obtain
  \begin{equation*}
    G(r_{X, N}) \circ \eta_{X \catactl N}
    = s_{X, F(N)} \circ (\id_X \catactl \eta_N),
  \end{equation*}
  which implies~\eqref{eq:C-mod-func-adj-proof-1}.
  Similarly, assuming $N = G(M)$ and chasing $\id_{G(M)}$ around the diagram \eqref{eq:C-prof-adj-2}, we obtain
  \begin{equation*}
    \varepsilon_{X \catactl M} \circ F(s_{X, M}) = (\id_X \catactl \varepsilon_M) \circ r_{X,G(M)},
  \end{equation*}
  which implies~\eqref{eq:C-mod-func-adj-proof-2}. The proof is done.
\end{proof}

\subsection{Internal Hom functors}
\label{subsec:internal-hom}

Let $\mathcal{C}$ be a monoidal category. We say that a left $\mathcal{C}$-module category $\mathcal{M}$ is {\em closed} if, for every object $M \in \mathcal{M}$, the functor
\begin{equation}
  \label{eq:iHom-def-0}
  T_M: \mathcal{C} \to \mathcal{M},
  \quad X \mapsto X \catactl M
\end{equation}
has a right adjoint ({\it cf}. the definition of a closed monoidal category).

Suppose that $\mathcal{M}$ is a closed left $\mathcal{C}$-module category. Given an object $M \in \mathcal{M}$, we denote a right adjoint of the functor $T_M$ by $\iHom_{\mathcal{M}}(M, -)$ or $Y_M$. By definition, there is a natural isomorphism
\begin{equation}
  \label{eq:iHom-def-1}
  \Hom_{\mathcal{C}}(X, \iHom_{\mathcal{M}}(M, N))
  \cong \Hom_{\mathcal{M}}(X \catactl M, N)
\end{equation}
for $X \in \mathcal{C}$ and $N \in \mathcal{M}$. By the parameter theorem for adjunctions \cite[IV.7]{MR1712872}, we extend the assignment $(M, N) \mapsto \iHom_{\mathcal{M}}(M, N)$ to a functor
\begin{equation}
  \label{eq:iHom-def-2}
  \iHom_{\mathcal{M}}: \mathcal{M}^{\op} \times \mathcal{M} \to \mathcal{C}
\end{equation}
in such a way that the isomorphism~\eqref{eq:iHom-def-1} is also natural in $M \in \mathcal{M}$. We call the functor \eqref{eq:iHom-def-2} the {\em internal Hom functor} of $\mathcal{M}$. If the module category $\mathcal{M}$ is clear from the context, we write $\iHom_{\mathcal{M}}$ simply as $\iHom$.

We recall basic properties of the internal Hom functor. First, the internal Hom functor makes $\mathcal{M}$ a $\mathcal{C}$-enriched category \cite{MR1976459}. To define the composition and the identity for $\mathcal{M}$ as a $\mathcal{C}$-enriched category, we denote by
\begin{equation*}
  \icoev_{X,M}: X \to \iHom(M, X \catactl M)
  \quad \text{and} \quad
  \ieval_{M,N}: \iHom(M, N) \otimes M \to N
\end{equation*}
the unit and the counit of the adjunction $T_M \dashv Y_M$, respectively. Then, for three objects $M_1, M_2, M_3 \in \mathcal{M}$, the composition morphism
\begin{equation*}
  \icomp_{M_1, M_2, M_3}: \iHom(M_2, M_3) \otimes \iHom(M_1, M_2) \to \iHom(M_1, M_3)
\end{equation*}
is defined to be the morphism corresponding to
\begin{equation*}
  \ieval_{M_2, M_3} (\id_{\iHom(M_2, M_3)} \catactl \ieval_{M_1, M_2})
  : \iHom(M_2, M_3) \catactl \iHom(M_1, M_2) \catactl M_1 \to M_3
\end{equation*}
via the isomorphism~\eqref{eq:iHom-def-1} with $X = \iHom(M_2, M_3) \otimes \iHom(M_1, M_2)$, $M = M_1$ and $N = M_3$. The identity $\unitobj \to \iHom(M, M)$ is given by the unit $\icoev_{\unitobj, M}$.

Fix an object $M \in \mathcal{M}$. Since the functor $T_M: \mathcal{C} \to \mathcal{M}$ is a strong left $\mathcal{C}$-module functor in an obvious way, its right adjoint $Y_M = \iHom(M, -)$ is a lax left $\mathcal{C}$-module functor from $\mathcal{M}$ to $\mathcal{C}$ by Lemma~\ref{lem:C-mod-func-adj}. We denote by
\begin{equation*}
  \mathfrak{a}_{X,M,N}: X \otimes \iHom(M, N) \to \iHom(M, X \catactl N)
  \quad (X \in \mathcal{C}, M, N \in \mathcal{M})
\end{equation*}
its structure morphism. Explicitly, $\mathfrak{a}_{X,M,N}$ is given by the composition
\begin{subequations}
  \newcommand{\xarr}[1]{\xrightarrow{\makebox[8em]{$\scriptstyle #1$}}}
  \begin{align}
    \label{eq:iHom-mod-left-1}
    X \otimes \iHom(M, N)
    \xarr{\icoev_{X \otimes \iHom(M, N), M}}\,
    & \iHom(M, X \catactl \iHom(M, N) \catactl M) \\
    \label{eq:iHom-mod-left-2}
    \xarr{\iHom(M, X \catactl \ieval_{M, N})}\,
    & \iHom(M, X \catactl N)
  \end{align}
\end{subequations}
for $X \in \mathcal{C}$ and $N \in \mathcal{M}$.

We now suppose that the monoidal category $\mathcal{C}$ is rigid. Given an object $M \in \mathcal{M}$, we often denote by $M^{\op}$ the object $M$ regarded as an object of $\mathcal{M}^{\op}$. The category $\mathcal{M}^{\op}$ is a right $\mathcal{C}$-module category by the action $\mathbin{\tilde{\catactr}}$ given by
$M^{\op} \mathbin{\tilde{\catactr}} X = ({}^* \! X \catactl M)^{\op}$ for $X \in \mathcal{C}$ and $M \in \mathcal{M}$, and hence $\mathcal{M}^{\op} \times \mathcal{M}$ is a $\mathcal{C}$-bimodule category. The category $\FUN_{\mathcal{C}}(\mathcal{M}, \mathcal{C})$ is a right $\mathcal{C}$-module category by the action $\catactr$ given by
\begin{equation}
  \label{eq:Mod-M-C-right-action}
  (F \catactr X)(M) = F(M) \otimes X
  \quad (X \in \mathcal{C}, F \in \FUN_{\mathcal{C}}(\mathcal{M}, \mathcal{C}), M \in \mathcal{M}).
\end{equation}

\begin{lemma}
  The internal Hom functor defines a right $\mathcal{C}$-module functor
  \begin{equation*}
    \mathcal{M}^{\op} \to \FUN_{\mathcal{C}}(\mathcal{M}, \mathcal{C}),
    \quad M \mapsto Y_M.
  \end{equation*}
\end{lemma}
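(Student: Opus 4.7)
The plan is to exhibit, for each $M \in \mathcal{M}$ and $X \in \mathcal{C}$, an isomorphism
\begin{equation*}
  \xi_{M,X}: Y_M \catactr X \xrightarrow{\cong} Y_{M^{\op} \catactr X}
\end{equation*}
in $\FUN_{\mathcal{C}}(\mathcal{M}, \mathcal{C})$, and then to verify that $\{\xi_{M,X}\}$ is natural in $M$ and $X$ and satisfies the unit/pentagon axioms of a right $\mathcal{C}$-module functor structure on $M \mapsto Y_M$. (Since $M \mapsto Y_M$ is contravariant, the source should be read as $\mathcal{M}^{\op}$.)

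The construction rests on the factorization
\begin{equation*}
  T_{{}^*X \catactl M} = T_M \circ R_{{}^*X}, \qquad R_Z := (-) \otimes Z,
\end{equation*}
of strong left $\mathcal{C}$-module functors $\mathcal{C} \to \mathcal{M}$. Since $\mathcal{C}$ is rigid, the right adjoint of $R_{{}^*X}$ is $R_X$. Applying the canonical isomorphism~\eqref{eq:right-adj-uniqueness} with $F = R_{{}^*X}$ and $G = T_M$ yields
\begin{equation*}
  \xi_{M,X} := \gamma_{R_{{}^*X}, T_M} : R_X \circ Y_M \xrightarrow{\cong} Y_{{}^*X \catactl M},
\end{equation*}
whose value at $N \in \mathcal{M}$ is the isomorphism $\iHom(M,N) \otimes X \cong \iHom({}^*X \catactl M, N)$.

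Three checks are needed. First, $\xi_{M,X}$ must be a morphism of lax left $\mathcal{C}$-module functors; both sides carry canonical lax structures as right adjoints of strong module functors by Lemma~\ref{lem:C-mod-func-adj}, and the compatibility is best extracted via the module profunctor formalism, by composing the chain of $\mathcal{C}$-module profunctor isomorphisms
\begin{equation*}
  \Hom_{\mathcal{C}}(Y, \iHom(M,N) \otimes X) \cong \Hom_{\mathcal{C}}(Y \otimes {}^*X, \iHom(M,N)) \cong \Hom_{\mathcal{M}}((Y \otimes {}^*X) \catactl M, N)
\end{equation*}
and invoking Lemma~\ref{lem:C-Prof-Yoneda}. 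Second, naturality in $X$ and $M$ follows from the naturality of $\gamma$ and the functoriality of $M \mapsto T_M$. Third, the unit axiom is trivial, and the pentagon for $M^{\op} \catactr X \catactr Y$ is exactly the associativity diagram~\eqref{eq:right-adj-uniqueness-3} applied to the threefold composite $T_M \circ R_{{}^*X} \circ R_{{}^*Y}$, after identifying $R_{{}^*X} \circ R_{{}^*Y}$ with $R_{{}^*(X \otimes Y)}$ via ${}^*Y \otimes {}^*X \cong {}^*(X \otimes Y)$.

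The main obstacle is the first check. Rather than matching the explicit $\mathfrak{a}$-formulas on both sides directly, the cleanest route is to pass to the module profunctor picture of Lemma~\ref{lem:C-Prof-Yoneda}, which reduces the verification to the routine unit/counit calculus recorded in~\eqref{eq:adj-iso-by-unit-counit}.
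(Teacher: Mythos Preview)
Your proposal is correct and follows essentially the same approach as the paper: both use the factorization $T_{{}^*X \catactl M} = T_M \circ R_{{}^*X}$, obtain the structure isomorphism from the canonical isomorphism~\eqref{eq:right-adj-uniqueness}, and verify the associativity axiom via~\eqref{eq:right-adj-uniqueness-3}. The paper's proof is terser---it simply asserts that $\gamma$ is an isomorphism of left $\mathcal{C}$-module functors without further comment---whereas you spell out the first check via the module profunctor formalism; this is a reasonable way to justify that point, though one could also argue it directly from Lemma~\ref{lem:C-mod-func-adj} and the uniqueness of adjoints in the $\mathcal{C}$-module setting.
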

\begin{proof}
  For $X \in \mathcal{C}$, we define $R_X: \mathcal{C} \to \mathcal{C}$ by $R_X(V) = V \otimes X$.
  We note that $R_X$ is right adjoint to $R_{{}^{*}\!X}$.
  By the uniqueness of a right adjoint, we have
  \begin{equation*}
    Y_M \catactr X
    = R_X \circ Y_{M}
    = R_{{}^* \! X}^{\radj} \circ T_{M}^{\radj}
    \cong (T_{M} \circ R_{{}^* \! X})^{\radj}
    = (T_{\, {}^* \! X \catactl M})^{\radj}
    = Y_{\, {}^* \! X \catactl M}
  \end{equation*}
  as left $\mathcal{C}$-module functors. By the commutative diagram~\eqref{eq:right-adj-uniqueness-3}, we directly check that the functor $M \mapsto Y_M$ in concern is a right $\mathcal{C}$-module functor with this structure morphism.
\end{proof}

In this paper, we denote by
\begin{equation*}
  \mathfrak{b}_{M,N,X}: \iHom(M, N) \otimes X \to \iHom({}^* \! X \catactl M, N)
  \quad (M, N \in \mathcal{M}, X \in \mathcal{C})
\end{equation*}
the component of the isomorphism $Y_M \catactr X \to Y_{{}^* \! X \catactl M}$ given in the proof of the above lemma. For $X \in \mathcal{C}$ and $M \in \mathcal{M}$, we define the morphism $\beta_{X,M}$ by
\begin{equation}
  \label{eq:iHom-mod-right-00}
  \beta_{X,M} = \Big(
  R_{X} \circ Y_M
  = R_{\, {}^* \! X}^{\radj} \circ T_M^{\radj}
  \xrightarrow[\cong]{\quad \eqref{eq:right-adj-uniqueness} \quad}
  (T_M \circ R_{\, {}^* \! X})^{\radj} = Y_{{}^* \! X \catactl M}
  \Big).
\end{equation}
Then $\mathfrak{b}_{M,N,X}$ is given by $\mathfrak{b}_{M,N,X} = \beta_{X,M}(N)$.
By expressing the isomorphism \eqref{eq:right-adj-uniqueness} in terms of the unit and the counit, we see that the natural isomorphism $\mathfrak{b}$ is given by the following composition:
\begin{subequations}
  \newcommand{\xarr}[1]{\xrightarrow{\makebox[10em]{$\scriptstyle #1$}}}
  \begin{align}
    \notag
    & \iHom(M, N) \otimes X \\
    \label{eq:iHom-mod-right-1}
    & \xarr{\icoev_{\iHom(M, N) \catactl X, {}^* \! X \catactl M}}
      \iHom({}^* \! X \catactl M, \iHom(M, N) \catactl X \catactl {}^* \! X  \catactl M) \\
    \label{eq:iHom-mod-right-2}
    & \xarr{\quad \iHom(\id, \id \catactl \eval_X' \catactl \id) \quad}
      \iHom({}^* \! X \catactl M, \iHom(M, N) \catactl M) \\
    \label{eq:iHom-mod-right-3}
    & \xarr{\quad \iHom(M, \ieval_{M, N}) \quad}
      \iHom({}^* \! X \catactl M, N).
  \end{align}
\end{subequations}
Since, by construction, $\mathfrak{b}_{M, -, X}: \iHom(M, -) \otimes X \to \iHom({}^* \! X \catactr M, -)$ is an isomorphism of left $\mathcal{C}$-module functors, we see that the diagram
\begin{equation*}
  \begin{tikzcd}[column sep = 64pt]
    X \otimes \iHom(M, N) \otimes Y
    \arrow[d, "{\mathfrak{a}_{X,M,N} \otimes Y}"']
    \arrow[r, "{X \otimes \mathfrak{b}_{M,N,Y}}"]
    & X \otimes \iHom({}^* Y \catactl M, N)
    \arrow[d, "{\mathfrak{a}_{X,{}^* \! Y \catactl M,N}}"] \\
    \iHom(M, X \catactl N) \otimes Y
    \arrow[r, "{\mathfrak{b}_{X \catactl M,N,Y}}"]
    & \iHom({}^* Y \catactl M, X \catactl N)
  \end{tikzcd}
\end{equation*}
commutes for all $X, Y \in \mathcal{C}$ and $M, N \in \mathcal{M}$. This means:

\begin{lemma}
  \label{lem:iHom-bimodule-functor}
  The internal Hom functor $\iHom: \mathcal{M}^{\op} \times \mathcal{M} \to \mathcal{C}$ is a $\mathcal{C}$-bimodule functor with structure morphisms $\mathfrak{a}$ and $\mathfrak{b}$ introduced in the above.
\end{lemma}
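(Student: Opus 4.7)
The plan is to treat the lemma as a repackaging of three facts already in hand: (a) for each fixed $M$, the functor $\iHom(M, -)$ is a lax left $\mathcal{C}$-module functor with structure $\mathfrak{a}$; (b) for each fixed $N$, the functor $M \mapsto \iHom(M, N)$ is a right $\mathcal{C}$-module functor on $\mathcal{M}^{\op}$ with structure $\mathfrak{b}$; and (c) the bimodule compatibility square displayed immediately before the lemma commutes.

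For (a), I would simply invoke Lemma~\ref{lem:C-mod-func-adj} applied to the adjunction $T_M \dashv Y_M$, which is the very definition of $\mathfrak{a}$. For (b), the preceding (unlabelled) lemma already produces $M \mapsto Y_M$ as a right $\mathcal{C}$-module functor into $\FUN_{\mathcal{C}}(\mathcal{M}, \mathcal{C})$, whose structure morphism has component $\mathfrak{b}_{M, N, Y}$ at $N$; unpacking its associativity and unit axioms in the variable $Y$ (which trace back to the coherence \eqref{eq:right-adj-uniqueness-3} via \eqref{eq:iHom-mod-right-00}--\eqref{eq:iHom-mod-right-01}) gives the right module-functor axioms for $\iHom(-, N)$ at each fixed $N$.

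The substantive content is (c). The key observation is that the preceding lemma delivers $\mathfrak{b}_{M, -, Y}$ not merely as a natural transformation of ordinary functors but as a morphism $Y_M \catactr Y \to Y_{\, {}^* Y \catactl M}$ in $\FUN_{\mathcal{C}}(\mathcal{M}, \mathcal{C})$, i.e., as a morphism of lax left $\mathcal{C}$-module functors. Using the definition~\eqref{eq:Mod-M-C-right-action} of the right action on $\FUN_{\mathcal{C}}(\mathcal{M}, \mathcal{C})$, the left $\mathcal{C}$-module structure on $Y_M \catactr Y$ at $N$ is precisely $\mathfrak{a}_{X, M, N} \otimes \id_Y$, while that on $Y_{\, {}^* Y \catactl M}$ is $\mathfrak{a}_{X, \, {}^* Y \catactl M, N}$. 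The condition that $\mathfrak{b}_{M, -, Y}$ intertwine these two structures is therefore exactly the commutativity of the displayed square.

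The main point requiring care, and the only place where a small calculation is unavoidable, is the unambiguous identification of the induced left module structure on $Y_M \catactr Y$ as $\mathfrak{a} \otimes \id_Y$ rather than a twisted variant involving associators. In the strict setting adopted throughout the paper this is immediate from \eqref{eq:Mod-M-C-right-action}, so no real obstacle arises; combining (a), (b) and (c) then yields the $\mathcal{C}$-bimodule functor structure on $\iHom$ asserted by the lemma.
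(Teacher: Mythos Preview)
Your proposal is correct and follows essentially the same approach as the paper: the paper's argument is precisely the paragraph preceding the lemma, which establishes (a) and (b) as you describe and then observes that $\mathfrak{b}_{M,-,X}$ is, by construction, an isomorphism of left $\mathcal{C}$-module functors, yielding the bimodule compatibility square. Your added remark about identifying the left module structure on $Y_M \catactr Y$ with $\mathfrak{a} \otimes \id_Y$ makes explicit a step the paper leaves implicit.
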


\subsection{Module profunctors and bimodule functors}
\label{subsec:mod-prof-bimod}

Let $\mathcal{C}$ be a rigid monoidal category, and let $\mathcal{M}$ and $\mathcal{N}$ be closed left $\mathcal{C}$-module categories. Given a left $\mathcal{C}$-module functor $F: \mathcal{M} \to \mathcal{N}$, we define
\begin{equation*}
  \underline{H}_F: \mathcal{N}^{\op} \times \mathcal{M} \to \mathcal{C},
  \quad (N, M) \mapsto \iHom_{\mathcal{N}}(N, F(M)).
\end{equation*}
This is a $\mathcal{C}$-bimodule functor by Lemma~\ref{lem:iHom-bimodule-functor} and thus we have a functor
\begin{equation}
  \label{eq:mod-prof-construction-2}
  \FUN_{\mathcal{C}}(\mathcal{M}, \mathcal{N})
  \to \FUN_{\mathcal{C}|\mathcal{C}}(\mathcal{N}^{\op} \times \mathcal{M}, \mathcal{C}),
  \quad F \mapsto \underline{H}_F,
\end{equation}
where $\FUN_{\mathcal{C}|\mathcal{C}}(\mathcal{X}, \mathcal{Y})$ for $\mathcal{C}$-bimodule categories $\mathcal{X}$ and $\mathcal{Y}$ means the category of $\mathcal{C}$-bimodule functors from $\mathcal{X}$ to $\mathcal{Y}$.

Given a $\mathcal{C}$-bimodule functor $T: \mathcal{N}^{\op} \times \mathcal{M} \to \mathcal{C}$ with structure morphism
\begin{equation*}
  s_{X,M,N,Y}: X \otimes T(N, M) \otimes Y \to T({}^* Y \catactl N, X \catactl M),
\end{equation*}
we define $T_0: \mathcal{M} \profto \mathcal{N}$ by $T_0(N, M) = \Hom_{\mathcal{C}}(\unitobj, T(N, M))$. This profunctor is in fact a $\mathcal{C}$-module profunctor by the structure map
\begin{align*}
  T_0(N, M) & \to T_0(X \catactl M, X \catactl N), \\
  f & \mapsto s_{X, M, N, X^*} \circ (\id_X \otimes f \otimes \id_{X^*}) \circ \coev_X
\end{align*}
for $X \in \mathcal{C}$, $M \in \mathcal{M}$ and $N \in \mathcal{M}$. The assignment $T \mapsto T_0$ extends to a functor
\begin{equation}
  \label{eq:mod-prof-construction-3}
  \FUN_{\mathcal{C}|\mathcal{C}}(\mathcal{N}^{\op} \times \mathcal{M}, \mathcal{C})
  \to \PROF_{\mathcal{C}}(\mathcal{M}, \mathcal{N}),
  \quad T \mapsto T_0.
\end{equation}
The composition of \eqref{eq:mod-prof-construction-2} and \eqref{eq:mod-prof-construction-3} sends $F \in \FUN_{\mathcal{C}}(\mathcal{M}, \mathcal{N})$ to
\begin{equation*}
  \mathcal{M} \profto \mathcal{N},
  \quad (N, M) \mapsto \Hom_{\mathcal{C}}(\unitobj, \iHom_{\mathcal{N}}(N, F(M))).
\end{equation*}
By the definition of the internal Hom functor, we see that this $\mathcal{C}$-module profunctor is canonically isomorphic to the $\mathcal{C}$-module profunctor $H_F$ given by~\eqref{eq:functor-to-prof}. Thus we have the following diagram of functors commuting up to isomorphisms:
\begin{equation*}
  \begin{tikzcd}
    \FUN_{\mathcal{C}}(\mathcal{M}, \mathcal{N})
    \arrow[rr, "{\eqref{eq:mod-prof-construction-1}}"]
    \arrow[rd, "{\eqref{eq:mod-prof-construction-2}}"']
    & & \PROF_{\mathcal{C}}(\mathcal{M}, \mathcal{N}) \\
    & \FUN_{\mathcal{C}|\mathcal{C}}(\mathcal{N}^{\op} \times \mathcal{M}, \mathcal{C})
    \arrow[ru, "{\eqref{eq:mod-prof-construction-3}}"']
  \end{tikzcd}
\end{equation*}

\begin{lemma}
  \label{lem:mod-prof-ff}
  Every arrow in the above diagram is a fully faithful functor.
\end{lemma}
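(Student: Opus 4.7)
We have already shown (via the unnamed lemma immediately following the definition of~\eqref{eq:mod-prof-construction-1}) that the arrow~\eqref{eq:mod-prof-construction-1} is fully faithful. Since the displayed diagram commutes up to natural isomorphism, the composite $\eqref{eq:mod-prof-construction-3}\circ\eqref{eq:mod-prof-construction-2}$ is fully faithful as well. The standard categorical fact that, if $G\circ F$ is fully faithful and $G$ is faithful, then $F$ is fully faithful reduces the lemma to the single assertion that~\eqref{eq:mod-prof-construction-3} is fully faithful.

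To prove this, I use a Yoneda-type argument in $\mathcal{C}$. Fix a $\mathcal{C}$-bimodule functor $T\colon\mathcal{N}^{\op}\times\mathcal{M}\to\mathcal{C}$. Because $\mathcal{C}$ is rigid, the left $\mathcal{C}$-module structure of $T$ is invertible, so $X\otimes T(N,M)\cong T(N,X\catactl M)$ for every $X\in\mathcal{C}$. Combined with the rigidity adjunction $\Hom_{\mathcal{C}}(\unitobj,X\otimes V)\cong\Hom_{\mathcal{C}}({}^{*}X,V)$ and the substitution $X=Z^{*}$ (so that ${}^{*}X=Z$), this yields a natural isomorphism
\[
  \Hom_{\mathcal{C}}(Z, T(N, M)) \;\cong\; T_0(N, Z^{*}\catactl M)
  \qquad (Z\in\mathcal{C}),
\]
together with the analogous formula for $S$. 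By the Yoneda lemma, giving a morphism $\phi_{N,M}\colon T(N,M)\to S(N,M)$ in $\mathcal{C}$ is therefore equivalent to giving a family of maps $T_0(N, Z^{*}\catactl M)\to S_0(N, Z^{*}\catactl M)$ natural in $Z\in\mathcal{C}$.

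Given $\alpha\colon T_0\to S_0$ in $\PROF_{\mathcal{C}}(\mathcal{M},\mathcal{N})$, its components $\alpha_{N, Z^{*}\catactl M}$ constitute such a family---naturality in $Z$ comes from the naturality of $\alpha$ in its second variable---so the Yoneda correspondence produces a candidate $\phi=(\phi_{N,M})$. To finish, I must check that $\phi$ is a morphism of $\mathcal{C}$-bimodule functors: naturality in $(N,M)$ and compatibility with the left $\mathcal{C}$-action follow immediately from naturality of $\alpha$, while compatibility with the right $\mathcal{C}$-action is a restatement of the defining condition~\eqref{eq:C-prof-morph} for $\alpha$, decoded through the recipe in~\eqref{eq:mod-prof-construction-3} that builds the module profunctor structure on $T_0$ out of the full bimodule structure of $T$ and the coevaluation maps $\coev_X$ of $\mathcal{C}$. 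The inverse construction $\phi\mapsto\Hom_{\mathcal{C}}(\unitobj,\phi_{-,-})$ evidently recovers $\alpha$. The main obstacle is precisely this last translation: carefully matching~\eqref{eq:C-prof-morph} against the right-action compatibility of $\phi$, in terms of the bimodule structure of $T$ and $S$, is a routine but finicky diagram chase; everything else in the argument is formal Yoneda.
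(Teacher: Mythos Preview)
Your proof is correct and, in fact, supplies an argument that the paper's own proof leaves incomplete. Both you and the paper reduce the lemma to properties of~\eqref{eq:mod-prof-construction-3} and both exploit the same Yoneda identification $\Hom_{\mathcal{C}}(Z,T(N,M))\cong T_0(N,Z^{*}\catactl M)$ coming from rigidity and the invertible left $\mathcal{C}$-module structure of $T$. The difference is in what each proves about~\eqref{eq:mod-prof-construction-3}: the paper asserts that fullness of~\eqref{eq:mod-prof-construction-3} follows from full faithfulness of the composite~\eqref{eq:mod-prof-construction-1}, and then only proves faithfulness explicitly. That inference is not valid in general (fullness of $G\circ F$ does not force $G$ full unless $F$ is essentially surjective), so the paper's argument has a gap precisely at the point you address. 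Your Yoneda reconstruction of $\phi$ from $\alpha$ gives the missing fullness directly, after which faithfulness and the full faithfulness of~\eqref{eq:mod-prof-construction-2} follow by the elementary fact you quote.

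One small refinement: your claim that left $\mathcal{C}$-action compatibility of $\phi$ follows ``immediately from naturality of $\alpha$'' is slightly misleading. What actually makes it automatic is that your Yoneda identification is \emph{built from} the left module structure of $T$ and $S$; the compatibility then reduces to the associativity axiom for that structure, not to naturality of $\alpha$ per se. The right-action compatibility is, as you say, the substantive check: once left compatibility is in hand, the bimodule structure $s_{X,M,N,X^{*}}$ factors through the right structure, and the module-profunctor condition~\eqref{eq:C-prof-morph} on $\alpha$ unwinds (after replacing $M$ by $Z^{*}\catactl M$ and invoking Yoneda in $Z$) to exactly the right-compatibility square for $\phi$. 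This is indeed routine but worth spelling out in a final version.
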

\begin{proof}
  We have proved that \eqref{eq:mod-prof-construction-1} is fully faithful. Thus \eqref{eq:mod-prof-construction-2} is faithful and \eqref{eq:mod-prof-construction-3} is full. To prove this lemma, it suffices to show either the fullness of \eqref{eq:mod-prof-construction-2} or the faithfulness of \eqref{eq:mod-prof-construction-3}.

  We show that the functor \eqref{eq:mod-prof-construction-3} is faithful. We write the source of \eqref{eq:mod-prof-construction-3} as $\mathcal{F}$ for simplicity. Let $S$ and $T$ be objects of $\mathcal{F}$, and let $f$ and $g$ be morphisms from $S$ to $T$ in $\mathcal{F}$. By the definition of a morphism of bimodule functors, there is a commutative diagram
  \begin{equation*}
    \footnotesize
    \begin{tikzcd}
      \Hom_{\mathcal{C}}(\unitobj, S(N, {}^* \! X \catactl M))
      \arrow[r, "{\cong}"]
      \arrow[d, "{\Hom_{\mathcal{C}}(\unitobj, f_{N, {}^* \! X \catactl M})}"]
      & \Hom_{\mathcal{C}}(\unitobj, {}^* \! X \otimes S(N, M))
      \arrow[r, "{\cong}"]
      \arrow[d, "{\Hom_{\mathcal{C}}(\unitobj, {}^* \! X \catactl f_{N, M})}"]
      & \Hom_{\mathcal{C}}(X, S(N, M))
      \arrow[d, "{\Hom_{\mathcal{C}}(X, f_{N, M})}"] \\
      \Hom_{\mathcal{C}}(\unitobj, T(N, {}^* \! X \catactl M))
      \arrow[r, "{\cong}"]
      & \Hom_{\mathcal{C}}(\unitobj, {}^* \! X \otimes T(N, M))
      \arrow[r, "{\cong}"]
      & \Hom_{\mathcal{C}}(X, T(N, M))
    \end{tikzcd}
  \end{equation*}
  for $X \in \mathcal{C}$, $N \in \mathcal{N}$ and $M \in \mathcal{M}$. There is a similar diagram for $g$. If $f$ and $g$ are mapped to the same morphism by the functor \eqref{eq:mod-prof-construction-3}, that is, the equation
  \begin{equation*}
    \Hom_{\mathcal{C}}(\unitobj, f_{N, M})
    = \Hom_{\mathcal{C}}(\unitobj, g_{N, M})
  \end{equation*}
  holds for all objects $M \in \mathcal{M}$ and $N \in \mathcal{N}$, then we have $f = g$ by the above commutative diagram for $f$, an analogous diagram for $g$ and the Yoneda lemma. The proof is done.
\end{proof}

We close this section by giving some useful remarks.

\begin{remark}
  \label{rem:iHom-mod-fun-induced-map}
  A functor $F: \mathcal{M} \to \mathcal{N}$ yields a natural transformation
  \begin{equation}
    \label{eq:iHom-mod-fun-induced-map-1}
    F|_{M,M'}: \Hom_{\mathcal{M}}(M, M') \to \Hom_{\mathcal{N}}(F(M), F(M')),
    \quad g \mapsto F(g)
  \end{equation}
  for $M, M' \in \mathcal{M}$. If $F$ is a left $\mathcal{C}$-module functor, then \eqref{eq:iHom-mod-fun-induced-map-1} is in fact a morphism of $\mathcal{C}$-module profunctors. Thus, by Lemma~\ref{lem:mod-prof-ff}, there is a unique morphism
  \begin{equation}
    \label{eq:iHom-mod-fun-induced-map-2}
    \underline{F}|_{M,M'}:
    \iHom_{\mathcal{M}}(M, M') \to \iHom_{\mathcal{N}}(F(M), F(M'))
    \quad (M, M' \in \mathcal{M})
  \end{equation}
  of $\mathcal{C}$-bimodule functors such that the diagram
  \begin{equation*}
    \begin{tikzcd}[column sep = 32pt]
      \Hom_{\mathcal{M}}(M, M')
      \arrow[r, "{\eqref{eq:iHom-def-1}}"]
      \arrow[d, "{F|_{M,M'}}"']
      & \Hom_{\mathcal{C}}(\unitobj, \iHom_{\mathcal{M}}(M, M'))
      \arrow[d, "{\Hom_{\mathcal{C}}(\unitobj, \underline{F}|_{M,M'})}"] \\
      \Hom_{\mathcal{N}}(F(M), F(M'))
      \ar[r, "{\eqref{eq:iHom-def-1}}"]
      & \Hom_{\mathcal{C}}(\unitobj, \iHom_{\mathcal{N}}(F(M), F(M')))
    \end{tikzcd}
  \end{equation*}
  commutes for all $M, M' \in \mathcal{M}$.
  One can check that the morphism \eqref{eq:iHom-mod-fun-induced-map-2} coincides with the composition
  \begin{equation*}
    \newcommand{\xarr}[1]{\xrightarrow{\makebox[9em]{$\scriptstyle #1$}}}
    \begin{aligned}
      \iHom_{\mathcal{M}}(M, M')
      & \xarr{\icoev_{\iHom(M, M'), F(M)}} \iHom_{\mathcal{N}}(F(M), \iHom(M, M') \catactl F(M')) \\
      & \xarr{\iHom_{\mathcal{N}}(F(M), s)} \iHom_{\mathcal{N}}(F(M), F(\iHom(M, M') \catactl M')) \\
      & \xarr{\iHom_{\mathcal{N}}(F(M), \ieval_{M,M'})} \iHom_{\mathcal{N}}(F(M), F(M')),
    \end{aligned}
  \end{equation*}
  where $s$ is the structure morphism of $F$.
\end{remark}

\begin{remark}
  \label{rem:iHom-mod-fun-adj}
  Let $F: \mathcal{M} \to \mathcal{N}$ be a left $\mathcal{C}$-module functor admitting a right adjoint, and let $\eta$ and $\varepsilon$ be the unit and the counit of the adjunction $F \dashv F^{\radj}$. We define
  \begin{equation}
    \label{eq:iHom-mod-fun-adj-2}
    \phi^F_{N,M}:
    \iHom_{\mathcal{N}}(F(M), N) \to \iHom_{\mathcal{M}}(M, F^{\radj}(N))
    \quad (N \in \mathcal{N}, M \in \mathcal{M})
  \end{equation}
  to be the following composition:
  \begin{equation*}
    \newcommand{\xarr}[1]{\xrightarrow{\makebox[9em]{$\scriptstyle #1$}}}
    \begin{aligned}
      \iHom_{\mathcal{N}}(F(M), N)
      & \xarr{\underline{F^{\radj}}|_{F(M),N}}
      \iHom_{\mathcal{M}}(F^{\radj} F(M), F^{\radj}(N)) \\
      & \xarr{\iHom_{\mathcal{M}}(\eta, F^{\radj}(N))}
      \iHom_{\mathcal{M}}(M, F^{\radj}(N)).
    \end{aligned}
  \end{equation*}
  One can check that the morphism \eqref{eq:iHom-mod-fun-adj-2} is an isomorphism of $\mathcal{C}$-bimodule functors with the inverse given by
  \begin{equation*}
    \newcommand{\xarr}[1]{\xrightarrow{\makebox[9em]{$\scriptstyle #1$}}}
    \begin{aligned}
      \iHom_{\mathcal{M}}(M, F^{\radj}(N))
      & \xarr{\underline{F}|_{F(M),N}}
      \iHom_{\mathcal{N}}(F(M), F F^{\radj}(N)) \\
      & \xarr{\iHom_{\mathcal{N}}(F(M), \varepsilon_N)}
      \iHom_{\mathcal{N}}(F(M), N).
    \end{aligned}
  \end{equation*}
  Let $G: \mathcal{N} \to \mathcal{P}$ be also a left $\mathcal{C}$-module functor admitting a right adjoint. Then we have the following commutative diagram:
  \begin{equation*}
    \begin{tikzcd}[column sep = 96pt]
      \iHom_{\mathcal{P}}(G F(M), P)
      \arrow[d, "{\phi^{G F}_{M,P}}"']
      \arrow[r, "{\phi^G_{F(M),P}}"]
      & \iHom_{\mathcal{N}}(F(M), G^{\radj}(P))
      \arrow[d, "{\phi^F_{F(M),G^{\radj}(P)}}"] \\
      \iHom_{\mathcal{M}}(M, (G F)^{\radj}(P))
      & \iHom_{\mathcal{M}}(M, F^{\radj} G^{\radj}(P)),
      \arrow[l, "{\iHom(M, \gamma_{F,G}(P))}"']
    \end{tikzcd}
  \end{equation*}
  where $\gamma$ is the isomorphism \eqref{eq:right-adj-uniqueness}.
\end{remark}

\section{Relative Serre functors for exact module categories}
\label{sec:rel-Serre}

In this section, we give a brief review of \cite{MR4042867,MR3435098} on relative Serre functors for exact module categories over finite tensor categories.
We discuss how the isomorphisms \eqref{eq:Intro-rel-Serre-1} and \eqref{eq:Intro-rel-Serre-2} coming with the relative Serre functor are given.
We also introduce a pivotal structure of an exact module category following Schaumann \cite{MR3435098}. Some applications of this structure will be given in Subsections \ref{subsec:piv-mod-categ} and \ref{subsec:symmetric-Frobenius-algebras}.

\subsection{Finite tensor categories and their modules}
\label{subsec:fin-ten-cat}

We first recall basic terminology. From now on, we work over a field $\bfk$ of arbitrary characteristic. By an algebra, we always mean an associative unital algebra over the field $\bfk$. Given algebras $A$ and $B$, we denote by ${}_A \Mod$, $\Mod_B$ and ${}_A \Mod_B$ the categories of finite-dimensional left $A$-modules, right $B$-modules and $A$-$B$-bimodules, respectively.

A {\em finite abelian category} \cite[Section 1.8]{MR3242743} is a $\bfk$-linear category that is equivalent to ${}_A \Mod$ for some finite-dimensional algebra $A$. We will frequently use the following well-known fact: A $\bfk$-linear functor between finite abelian categories has a left (right) adjoint if and only if it is left (right) exact.

A {\em finite multi-tensor category} \cite{MR2119143} is a finite abelian category $\mathcal{C}$ equipped with a structure of a rigid monoidal category such that the tensor product $\otimes: \mathcal{C} \times \mathcal{C} \to \mathcal{C}$ is $\bfk$-bilinear. A {\em finite tensor category} is a finite multi-tensor category with simple unit object. We note that the tensor product functor of a finite multi-tensor category is exact in each variable because of the rigidity.

Let $\mathcal{C}$ be a finite tensor category. A {\em finite left $\mathcal{C}$-module category} is a left $\mathcal{C}$-module category $\mathcal{M}$ such that $\mathcal{M}$ is a finite abelian category and the action of $\mathcal{C}$ on $\mathcal{M}$ is $\bfk$-bilinear and right exact in each variable. This definition ensures that a finite left $\mathcal{C}$-module category admits an internal Hom functor. It should be remarked that the action of $\mathcal{C}$ on a finite left $\mathcal{C}$-module category is {\em exact} despite that we only assume the right exactness; see \cite{MR3934626}.

Given two finite left $\mathcal{C}$-module categories $\mathcal{M}$ and $\mathcal{N}$, we denote by $\LEX_{\mathcal{C}}(\mathcal{M}, \mathcal{N})$ and $\REX_{\mathcal{C}}(\mathcal{M}, \mathcal{N})$ the category of $\bfk$-linear left and right exact left $\mathcal{C}$-module functors from $\mathcal{M}$ to $\mathcal{N}$, respectively. By Lemma~\ref{lem:C-mod-func-adj}, we have an equivalence
\begin{equation}
  \label{eq:adj-Rex-Lex}
  (-)^{\radj}: \REX_{\mathcal{C}}(\mathcal{M}, \mathcal{N})^{\op} \to \LEX_{\mathcal{C}}(\mathcal{N}, \mathcal{M}),
  \quad F \mapsto F^{\radj}
\end{equation}
of $\bfk$-linear categories.

Suppose, moreover, that $\mathcal{N}$ is a finite $\mathcal{C}$-bimodule category. Then the source and the target of the functor~\eqref{eq:adj-Rex-Lex} are $\mathcal{C}$-module categories in the following way:
\begin{itemize}
\item The category $\mathcal{R} := \REX_{\mathcal{C}}(\mathcal{M}, \mathcal{N})$ is a right $\mathcal{C}$-module category in the same way as \eqref{eq:Mod-M-C-right-action}, that is, by the action given by
  \begin{equation}
    \label{eq:REX-M-N-right-action}
    (F \catactr X)(M) = F(M) \catactr X
    \quad (X \in \mathcal{C}, F \in \mathcal{R}, M \in \mathcal{M}).
  \end{equation}
  Hence $\mathcal{R}^{\op}$ is a left $\mathcal{C}$-module category by the action $\tilde{\catactl}$ defined by
  \begin{equation}
    \label{eq:REX-M-N-op-left-action}
    X \mathbin{\tilde{\catactl}} F^{\op} = (F \catactr {}^*\! X)^{\op}
    \quad (F \in \mathcal{R}, X \in \mathcal{C}).
  \end{equation}
\item The category $\mathcal{L} := \LEX_{\mathcal{C}}(\mathcal{N}, \mathcal{M})$ is a left $\mathcal{C}$-module category by
  \begin{equation}
    \label{eq:LEX-M-N-left-action}
    (X \catactl F)(N) = F(N \catactr X)
    \quad (X \in \mathcal{C}, F \in \mathcal{L}, N \in \mathcal{N}).
  \end{equation}
\end{itemize}
The functor \eqref{eq:adj-Rex-Lex} is in fact an equivalence of left $\mathcal{C}$-module categories. This fact can be verified easily, but we give the following characterization of the module structure of \eqref{eq:adj-Rex-Lex} for later discussion: Given an object $X \in \mathcal{C}$, we define $R_X: \mathcal{N} \to \mathcal{N}$ by $R_X(N) = N \catactr X$ ($N \in \mathcal{N}$). The functor $R_X$ has an obvious left $\mathcal{C}$-module structure and is right adjoint to $R_{\, {}^* \! X}$. Thus, for $F \in \mathcal{R}$ and $X \in \mathcal{C}$, we have an isomorphism of left $\mathcal{C}$-module functors
\begin{equation*}
  X \catactl F^{\radj}
  = F^{\radj} \circ R_X
  = F^{\radj} \circ (R_{\, {}^* \! X})^{\radj}
  \xrightarrow[\cong]{\ \eqref{eq:right-adj-uniqueness}\ }
  (R_{\, {}^* \! X} \circ F)^{\radj}
  = (X \mathbin{\tilde{\catactl}} F)^{\radj}
\end{equation*}
by the uniqueness of a right adjoint. The functor \eqref{eq:adj-Rex-Lex} is made into a left $\mathcal{C}$-module functor by this isomorphism.

\subsection{Relative Serre functors}
\label{subsec:rel-Serre}

Let $\mathcal{C}$ be a finite tensor category, and let $\mathcal{M}$ be a finite left $\mathcal{C}$-module category. Following \cite{MR4042867,MR3435098}, we introduce:

\begin{definition}
  \label{def:rel-Serre}
  A {\em relative Serre functor} of $\mathcal{M}$ is a pair $\Ser = (\Ser, \phi)$ consisting of a functor $\Ser: \mathcal{M} \to \mathcal{M}$ and a natural isomorphism
  \begin{equation}
    \label{eq:rel-Serre-def}
    \phi_{M,N}: \iHom(N, \Ser(M)) \to \iHom(M, N)^* \quad (M, N \in \mathcal{C}).
  \end{equation}
\end{definition}

We first discuss when a relative Serre functor of $\mathcal{M}$ exists. An object $M \in \mathcal{M}$ is said to be $\mathcal{C}$-projective \cite{MR3934626} if the functor $Y_M := \iHom(M, -)$ is exact. We note the following characterization of $\mathcal{C}$-projectivity:

\begin{lemma}
  An object $M \in \mathcal{M}$ is $\mathcal{C}$-projective if and only if $P \catactl M$ is projective for all projective object $P \in \mathcal{M}$.
\end{lemma}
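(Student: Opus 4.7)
The plan is to reduce the claim to a direct application of the adjunction $T_M \dashv Y_M$, where $T_M = (-) \catactl M: \mathcal{C} \to \mathcal{M}$ and $Y_M = \iHom(M, -)$. First I would observe that $T_M$ is exact, since the action of $\mathcal{C}$ on a finite left $\mathcal{C}$-module category is exact in each variable (as recalled in Section~\ref{subsec:fin-ten-cat}). Being the right adjoint of an exact functor, $Y_M$ is automatically left exact, and consequently $\mathcal{C}$-projectivity of $M$ (i.e.\ exactness of $Y_M$) is equivalent to $Y_M$ preserving epimorphisms.

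Next, since $\mathcal{C}$ is a finite abelian category, it has enough projectives, and a morphism $\alpha$ in $\mathcal{C}$ is an epimorphism if and only if $\Hom_{\mathcal{C}}(P, \alpha)$ is surjective for every projective $P \in \mathcal{C}$. Thus $Y_M$ preserves epimorphisms iff for every epi $g: N \to N''$ in $\mathcal{M}$ and every projective $P \in \mathcal{C}$, the map $\Hom_{\mathcal{C}}(P, Y_M(g))$ is surjective. Using the defining natural isomorphism
\[
\Hom_{\mathcal{C}}(P, Y_M(N)) \cong \Hom_{\mathcal{M}}(P \catactl M, N),
\]
this becomes the condition that $\Hom_{\mathcal{M}}(P \catactl M, g)$ be surjective for every epi $g$ in $\mathcal{M}$, that is, the projectivity of $P \catactl M$ in $\mathcal{M}$. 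Assembling these equivalences gives the claim.

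No real obstacle arises: the whole argument is formal once one has the adjunction defining $\iHom$ together with the characterization of epimorphisms in $\mathcal{C}$ via projective test objects. The only point that requires a little care is the left exactness of $Y_M$, which relies on the (non-trivial but standard) fact, cited in the preceding subsection, that the $\mathcal{C}$-action on a finite module category is automatically exact on both sides, not only right exact as stipulated in the definition.
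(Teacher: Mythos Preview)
Your proof is correct and follows essentially the same route as the paper's: both hinge on the adjunction isomorphism $\Hom_{\mathcal{M}}(P \catactl M, -) \cong \Hom_{\mathcal{C}}(P, Y_M(-))$, with the paper phrasing the converse via a single projective generator and reflection of exact sequences rather than your test-by-all-projectives criterion for epimorphisms. One small redundancy: left exactness of $Y_M$ follows simply from its being a right adjoint, so invoking exactness of $T_M$ (and hence the two-sided exactness of the action) is unnecessary for that step.
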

\begin{proof}
  For $M \in \mathcal{M}$ and $P \in \mathcal{C}$, there is an isomorphism
  \begin{equation}
    \label{eq:lem-C-projective}
    \Hom_{\mathcal{M}}(P \catactl M, -)
    \cong \Hom_{\mathcal{C}}(P, \iHom(M, -))
    = \Hom_{\mathcal{C}}(P, -) \circ Y_M
  \end{equation}
  of functors. Thus, if $Y_M$ is exact and $P \in \mathcal{C}$ is projective, then $\Hom_{\mathcal{M}}(P \catactl M, -)$ is exact as a composition of exact functors, and hence $P \catactl M$ is projective. Suppose, conversely, that $P \catactl M$ is projective for all projective object $P \in \mathcal{C}$. We consider the case where $P$ is a projective generator. Then $U := \Hom_{\mathcal{C} }(P, -)$ reflects exact sequences. Since $U \circ Y_M$ is exact by~\eqref{eq:lem-C-projective}, so is $Y_M$. The proof is done.
\end{proof}

Recall that an {\em exact left $\mathcal{C}$-module category} \cite[Definition 7.5.1]{MR2119143} is a finite left $\mathcal{C}$-module category $\mathcal{M}$ such that $P \catactl M \in \mathcal{M}$ is projective for all objects $M \in \mathcal{M}$ and all projective objects $P \in \mathcal{C}$.
According to \cite[Propositions 7.6.9]{MR2119143}, every $\bfk$-linear left $\mathcal{C}$-module functor from an exact left $\mathcal{C}$-module category to a finite left $\mathcal{C}$-module category is exact.

By the above lemma, a finite left $\mathcal{C}$-module category $\mathcal{M}$ is exact if and only if every object of $\mathcal{M}$ is $\mathcal{C}$-projective \cite[Propositions 7.9.7]{MR2119143}. We use this observation to prove:

\begin{lemma}
  A finite left $\mathcal{C}$-module category $\mathcal{M}$ has a relative Serre functor if and only if it is exact. If $\mathcal{M}$ is exact, then the functor
  \begin{equation}
    \label{eq:rel-Ser-standard}
    \Ser: \mathcal{M} \to \mathcal{M}, \quad M \mapsto Y_M^{\radj}(\unitobj)
  \end{equation}
  is a relative Serre functor of $\mathcal{M}$.  
\end{lemma}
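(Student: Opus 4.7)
The plan is to prove both directions by a single chain of natural adjunction isomorphisms relating $Y_M := \iHom(M,-)$, its right adjoint (when it exists), and the proposed Serre isomorphism, run forwards for the ``only if'' direction and backwards for the ``if'' direction.

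For the ``only if'' direction, suppose $(\Ser, \phi)$ is a relative Serre functor and fix $M \in \mathcal{M}$. I would construct a right adjoint of $Y_M$ directly via
\begin{align*}
  \Hom_{\mathcal{C}}(Y_M(N), X)
  &\cong \Hom_{\mathcal{C}}(\unitobj, X \otimes \iHom(M, N)^*) \\
  &\cong \Hom_{\mathcal{C}}(\unitobj, X \otimes \iHom(N, \Ser(M))) \\
  &\cong \Hom_{\mathcal{C}}(\unitobj, \iHom(N, X \catactl \Ser(M))) \\
  &\cong \Hom_{\mathcal{M}}(N, X \catactl \Ser(M)),
\end{align*}
natural in $X \in \mathcal{C}$ and $N \in \mathcal{M}$. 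The steps are, respectively, left duality in $\mathcal{C}$; the given $\phi_{M,N}$; the strong $\mathcal{C}$-module structure $\mathfrak{a}$ of $\iHom(N, -)$ (strong by Lemma~\ref{lem:C-mod-func-adj} and rigidity); and the defining adjunction of the internal Hom. The composite exhibits $X \mapsto X \catactl \Ser(M)$ as a right adjoint of $Y_M$, so $Y_M$ preserves colimits, hence is right exact; combined with left exactness of $Y_M$ as the right adjoint of $T_M$, the functor $Y_M$ is exact. As $M$ was arbitrary, every $M \in \mathcal{M}$ is $\mathcal{C}$-projective, so $\mathcal{M}$ is exact; setting $X = \unitobj$ recovers $Y_M^{\radj}(\unitobj) \cong \Ser(M)$.

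For the ``if'' direction, assume $\mathcal{M}$ is exact. Then $Y_M$ is exact for every $M$, so $Y_M^{\radj}$ exists and we may define $\Ser(M) := Y_M^{\radj}(\unitobj)$; functoriality is inherited from the contravariant functoriality of $M \mapsto Y_M$ and the uniqueness of right adjoints. To produce the Serre isomorphism I would reverse the previous chain: for $M, N \in \mathcal{M}$ and $X \in \mathcal{C}$,
\begin{align*}
  \Hom_{\mathcal{C}}(X, \iHom(N, \Ser(M)))
  &\cong \Hom_{\mathcal{M}}(X \catactl N, Y_M^{\radj}(\unitobj)) \\
  &\cong \Hom_{\mathcal{C}}(Y_M(X \catactl N), \unitobj) \\
  &\cong \Hom_{\mathcal{C}}(X \otimes \iHom(M, N), \unitobj) \\
  &\cong \Hom_{\mathcal{C}}(X, \iHom(M, N)^*),
\end{align*}
using at the third step the strong $\mathcal{C}$-module structure of $\iHom(M, -)$. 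By the Yoneda lemma this produces an isomorphism $\phi_{M,N}: \iHom(M, N)^* \to \iHom(N, \Ser(M))$ that is manifestly natural in $N$.

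The main subtlety I expect is naturality of $\phi$ in $M$, since the construction treats $M$ as a parameter. Every ingredient of the chain is in fact natural in $M \in \mathcal{M}^{\op}$: the internal Hom is a $\mathcal{C}$-bimodule functor by Lemma~\ref{lem:iHom-bimodule-functor}, $Y_M^{\radj}$ depends functorially on $M$ by the uniqueness of right adjoints, and the adjunction isomorphisms are natural in their object parameters. The cleanest way to promote the pointwise Yoneda isomorphism to a natural transformation in both variables is via the module-profunctor/bimodule-functor formalism of Section~\ref{subsec:mod-prof-bimod}: both $(M, N) \mapsto \iHom(M, N)^*$ and $(M, N) \mapsto \iHom(N, \Ser(M))$ assemble into $\mathcal{C}$-bimodule functors $\mathcal{M}^{\op} \times \mathcal{M} \to \mathcal{C}$, and the family of $X$-natural isomorphisms above furnishes an isomorphism of the associated $\mathcal{C}$-module profunctors, which by Lemma~\ref{lem:mod-prof-ff} lifts uniquely to an isomorphism of bimodule functors. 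This delivers the desired naturality and completes the construction.
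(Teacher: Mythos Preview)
Your proof is correct and follows essentially the same approach as the paper: the same chain of adjunction and duality isomorphisms, run in both directions, with the conclusion that $Y_M$ admits a right adjoint (hence is exact) in one direction and that $\Ser(M) := Y_M^{\radj}(\unitobj)$ represents the right functor in the other. Your final paragraph on naturality in $M$ is more careful than the paper (which simply asserts naturality in all variables), and while invoking Lemma~\ref{lem:mod-prof-ff} works, it is heavier than necessary here since the lemma only asks for a natural isomorphism of functors, not the full $\mathcal{C}$-bimodule compatibility---that is established separately later in the paper.
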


The first-half part of this lemma is \cite[Proposition 4.23]{MR4042867}. The formula \eqref{eq:rel-Ser-standard} of a relative Serre functor is found in the proof of \cite[Theorem 4.25]{MR4042867}. For later use, we include a detailed proof of this lemma.

\begin{proof}
  Suppose that $\mathcal{M}$ has a relative Serre functor $(\Ser, \phi)$. Then we have natural isomorphisms
  \begin{subequations}
    \begin{align}
      \label{eq:rel-Serre-def-1}
      \Hom_{\mathcal{C}}(\iHom(M, N), X)
      & \cong \Hom_{\mathcal{C}}(X^*, \iHom(M, N)^*) \\
      \label{eq:rel-Serre-def-2}
      & \cong \Hom_{\mathcal{C}}(X^*, \iHom(N, \Ser(M))) \\
      \label{eq:rel-Serre-def-3}
      & \cong \Hom_{\mathcal{M}}(X^* \catactl N, \Ser(M)) \\
      \label{eq:rel-Serre-def-4}
      & \cong \Hom_{\mathcal{M}}(N, X \catactl \Ser(M))
    \end{align}
  \end{subequations}
  for $X \in \mathcal{C}$ and $M, N \in \mathcal{M}$. This implies that the functor $Y_M$ is right exact for all $M \in \mathcal{M}$. Since $Y_M$ is left exact (as it is defined as a right adjoint), we conclude that $Y_M$ is exact. Hence $\mathcal{M}$ is exact.

  Suppose, conversely, that $\mathcal{M}$ is exact. Then $Y_M: \mathcal{M} \to \mathcal{C}$ is exact for all $M \in \mathcal{M}$ and hence we can define a functor $\Ser: \mathcal{M} \to \mathcal{M}$ by~\eqref{eq:rel-Ser-standard}. If we do so, then there are natural isomorphisms
  \begin{subequations}
    \begin{align}
      \label{eq:rel-Ser-std-iso-1}
      \Hom_{\mathcal{C}}(X, \iHom(N, \Ser(M)))
      & \cong \Hom_{\mathcal{M}}(X \catactl N, Y_M^{\radj}(\unitobj)) \\
      \label{eq:rel-Ser-std-iso-2}
      & \cong \Hom_{\mathcal{M}}(\iHom(M, X \catactl N), \unitobj) \\
      \label{eq:rel-Ser-std-iso-3}
      & \cong \Hom_{\mathcal{M}}(X \otimes \iHom(M, N), \unitobj) \\
      \label{eq:rel-Ser-std-iso-4}
      & \cong \Hom_{\mathcal{M}}(X, \iHom(M, N)^*)
    \end{align}
  \end{subequations}
  for $X \in \mathcal{C}$ and $M, N \in \mathcal{M}$. Thus $\iHom(N, \Ser(M)) \cong \iHom(M, N)^*$ by the Yoneda lemma. Hence $\Ser$ is a relative Serre functor of $\mathcal{M}$. The proof is done.
\end{proof}

In view of the above lemma, we suppose that $\mathcal{M}$ is exact and fix a relative Serre functor $(\Ser, \phi)$ of $\mathcal{M}$. By specializing \eqref{eq:rel-Serre-def-1}--\eqref{eq:rel-Serre-def-4} in the proof of the above lemma to $X = \unitobj$, we have a natural isomorphism
\begin{equation}
  \label{eq:rel-Ser-as-representing-obj}
  \Hom_{\mathcal{C}}(\iHom(M, N), \unitobj) \cong \Hom_{\mathcal{M}}(N, \Ser(M))
  \quad (M, N \in \mathcal{M}).
\end{equation}
This means that the object $\Ser(M)$ represents the functor
\begin{equation*}
  \mathcal{M}^{\op} \to \Sets,
  \quad N \mapsto \Hom_{\mathcal{C}}(\iHom(M, N), \unitobj).
\end{equation*}
Thus, by the Yoneda lemma, a relative Serre functor of $\mathcal{M}$ is unique up to isomorphisms. We will give a stronger statement in Lemma \ref{lem:rel-Ser-uniqueness} below.

In closing this subsection, we note the following important fact:

\begin{lemma}[{\cite[Subsection 4.4]{MR4042867}}]
  A relative Serre functor of an exact left $\mathcal{C}$-module category is a category equivalence.
\end{lemma}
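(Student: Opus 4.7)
The plan is to show that the relative Serre functor $\Ser$ is both fully faithful and essentially surjective, leveraging that $(-)^{**}: \mathcal{C} \to \mathcal{C}$ is a $\bfk$-linear monoidal auto-equivalence.

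For fully faithfulness, I would iterate the defining isomorphism $\phi$. Substituting $N := \Ser(M')$ in $\phi_{M,N}: \iHom(M,N)^{*} \xrightarrow{\sim} \iHom(N, \Ser M)$ and then applying the $*$-dual of $\phi_{M',M}$ produces a natural isomorphism
\begin{equation*}
  \iHom(\Ser M', \Ser M) \;\cong\; \iHom(M, \Ser M')^{*} \;\cong\; \iHom(M', M)^{**}.
\end{equation*}
Applying $\Hom_{\mathcal{C}}(\unitobj, -)$ and using that $(-)^{**}$ is a fully faithful endofunctor fixing $\unitobj$ (so that $\Hom_{\mathcal{C}}(\unitobj, Y^{**}) \cong \Hom_{\mathcal{C}}(\unitobj, Y)$ canonically) then yields a natural bijection $\Hom_{\mathcal{M}}(\Ser M', \Ser M) \cong \Hom_{\mathcal{M}}(M', M)$. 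A diagram chase, unpacking $\phi$ via the unit/counit formulas~\eqref{eq:adj-iso-by-unit-counit} together with the construction of $\underline{\Ser}$ in Remark~\ref{rem:iHom-mod-fun-induced-map}, verifies that this bijection coincides with the map $f \mapsto \Ser(f)$.

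For essential surjectivity, I would construct a left adjoint $\Ser^{\ladj}$ of $\Ser$ and identify it as a quasi-inverse. Since $\mathcal{M}$ is exact, the internal Hom $\iHom$ is biexact, so the functor
\begin{equation*}
  M \longmapsto \Hom_{\mathcal{C}}(\iHom(M,K), \unitobj) \cong \Hom_{\mathcal{M}}(K, \Ser M)
\end{equation*}
is left exact and hence representable in the finite abelian setting by some object $\Ser^{\ladj}(K) \in \mathcal{M}$; the parameter theorem promotes this to a functor left adjoint to $\Ser$. Fully faithfulness of $\Ser$ forces the counit $\Ser^{\ladj} \Ser \to \id_{\mathcal{M}}$ to be an isomorphism. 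To close the argument, I would re-run the double-dual computation with the left duality ${}^{*}(-)$ in place of $(-)^{*}$, obtaining an analogous natural isomorphism $\iHom(\Ser^{\ladj} M', \Ser^{\ladj} M) \cong {}^{**}\iHom(M', M)$ that shows $\Ser^{\ladj}$ is also fully faithful, whence the unit $\id_{\mathcal{M}} \to \Ser \Ser^{\ladj}$ is likewise an isomorphism and $\Ser$ is a quasi-inverse equivalence.

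The main obstacle I anticipate is the naturality bookkeeping in the first step: identifying the abstract Hom-set bijection with the functor-induced map $f \mapsto \Ser(f)$ requires carefully composing the unit/counit descriptions of $\phi$, the canonical iso $\unitobj^{**} \cong \unitobj$ coming from monoidality of $(-)^{**}$, and the natural transformation $\underline{\Ser}$, and checking they assemble correctly rather than differing by some nontrivial automorphism of $\id_{\mathcal{M}}$.
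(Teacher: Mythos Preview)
Your proposal is correct but takes a different route from the paper. The paper never proves full faithfulness separately; instead it constructs $\overline{\Ser}$ exactly as you build $\Ser^{\ladj}$ (these are the same functor, since $\Hom_{\mathcal{M}}(\overline{\Ser}(K),N)\cong\Hom_{\mathcal{C}}(\iHom(N,K),\unitobj)\cong\Hom_{\mathcal{M}}(K,\Ser N)$), derives the internal identity ${}^*\iHom(N,M)\cong\iHom(\overline{\Ser}(M),N)$ by a Yoneda argument over all $X\in\mathcal{C}$, and then simply composes the two identities to get $\iHom(N,M)\cong\iHom(N,\Ser\overline{\Ser}(M))$ and $\iHom(N,\overline{\Ser}\Ser(M))\cong\iHom(N,M)$, whence both unit and counit are isomorphisms directly. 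This is shorter than your fully-faithful-plus-adjoint strategy.

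Two remarks on your argument. First, the obstacle you flag is easier than you fear: once you have a natural bijection $\tau_{M',M}:\Hom(M',M)\to\Hom(\Ser M',\Ser M)$, naturality in each variable forces $\tau(f)=\theta_M\circ\Ser(f)=\Ser(f)\circ\theta_{M'}$ with $\theta_M:=\tau_{M,M}(\id_M)$, and surjectivity of $\tau$ makes each $\theta_M$ split epi and split mono, hence an isomorphism; so $\Ser$ is fully faithful without any unpacking of $\phi$. Second, your final step (``re-run the double-dual computation with ${}^{*}(-)$'') is not automatic from the mere left-adjoint property of $\Ser^{\ladj}$: you first need the identity ${}^*\iHom(N,M)\cong\iHom(\Ser^{\ladj}M,N)$, which is exactly what the paper establishes for $\overline{\Ser}$ via the $X$-parametrised Yoneda chain. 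Once you grant yourself that identity you may as well use the paper's direct composition argument and skip the second full-faithfulness check entirely.
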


\subsection{Twisted module structure of a relative Serre functor}

Let $\mathcal{C}$ be a finite tensor category. We first introduce the following notation: Given a tensor auto\-equivalence $F$ on $\mathcal{C}$ and a left $\mathcal{C}$-module category $\mathcal{M}$ with action $\catactl$, we denote by ${}_{F} \mathcal{M}$ the category $\mathcal{M}$ equipped with a new action $\catactl_F$ given by $X \catactl_F M = F(X) \catactl M$ for $X \in \mathcal{C}$ and $M \in \mathcal{M}$. A left $\mathcal{C}$-module functor from a left $\mathcal{C}$-module category $\mathcal{N}$ to ${}_{F}\mathcal{M}$ is often called an ($F$-)twisted $\mathcal{C}$-module functor from $\mathcal{N}$ to $\mathcal{M}$.

Now let $\mathcal{M}$ be an exact left $\mathcal{C}$-module category. By Lemma~\ref{lem:iHom-bimodule-functor}, the internal Hom functor of $\mathcal{M}$ has a structure of a $\mathcal{C}$-bimodule functor from $\mathcal{M}^{\op} \times \mathcal{M}$ to $\mathcal{C}$, which we denote by
\begin{equation*}
  \mathfrak{c}_{X, M, N, Y}:
  X \otimes \iHom(M, N) \otimes Y
  \to \iHom({}^{*}Y \catactl M, X \catactl N)
\end{equation*}
in this section. This induces a natural isomorphism
\begin{equation}
  \label{eq:iHom-dual-bimod}
  \iHom(Y \catactl M, {}^* \! X \catactl N)^*
  \xrightarrow{\quad (\mathfrak{c}_{{}^{*\!}X, M, N, Y^*})^* \quad}
  Y^{**} \otimes \iHom(M, N)^* \otimes X
\end{equation}
for $M, N \in \mathcal{M}$ and $X, Y \in \mathcal{C}$. Let $\DD = (-)^{**}$ denote the double left dual functor on $\mathcal{C}$. The dual of the internal Hom functor is in fact a $\mathcal{C}$-bimodule functor
\begin{equation*}
  \mathcal{M}^{\op} \times \mathcal{M} \to {}_{\DD}\mathcal{C},
  \quad (M, N) \mapsto \iHom(N, M)^*
\end{equation*}
by the structure morphism~\eqref{eq:iHom-dual-bimod}. We may also view it as a $\mathcal{C}$-bimodule functor
\begin{equation*}
  \mathcal{M}^{\op} \times {}_{\overline{\DD}} \mathcal{M} \to \mathcal{C},
  \quad (M, N) \mapsto \iHom(N, M)^*,
\end{equation*}
where $\overline{\DD} = {}^{**}(-)$ is the double right dual functor. By the argument of Subsection~\ref{subsec:mod-prof-bimod}, a relative Serre functor $\Ser$ of $\mathcal{M}$ has a unique structure of a left $\mathcal{C}$-module functor ${}_{\overline{\DD}}\mathcal{M} \to \mathcal{M}$ (or, equivalently, $\mathcal{M} \to {}_{\DD}\mathcal{M}$) such that the natural isomorphism~\eqref{eq:rel-Serre-def} is an isomorphism of $\mathcal{C}$-bimodule functors ({\it cf}. \cite[Proposition 4.30]{MR3435098} and \cite[Lemma 4.22]{MR4042867}).

The twisted $\mathcal{C}$-module structure of a relative Serre functor is one of central subjects of this paper. The following lemma shows that a relative Serre functor is unique up to isomorphism of {\em twisted $\mathcal{C}$-module functors} on $\mathcal{M}$.

\begin{lemma}[{\it cf}. {\cite[Proposition 4.30]{MR3435098}}]
  \label{lem:rel-Ser-uniqueness}
  Let $(\Ser, \phi)$ and $(\Ser', \phi')$ be relative Serre functors of $\mathcal{M}$. Then there is a unique natural transformation $\theta: \Ser' \to \Ser$ such that the equation
  \begin{equation}
    \label{eq:rel-Ser-uniqueness}
    \phi_{M,N}' = \phi_{M, N} \circ \iHom(N, \theta_M)
  \end{equation}
  holds for all objects $M, N \in \mathcal{M}$. Such a natural transformation $\theta: \Ser' \to \Ser$ is in fact an isomorphism of twisted $\mathcal{C}$-module functors.
\end{lemma}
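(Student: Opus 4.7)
The plan is to deduce both the existence and the module compatibility of $\theta$ from Lemma~\ref{lem:mod-prof-ff}, and then verify uniqueness of $\theta$ as a plain natural isomorphism by an elementary Yoneda argument.

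I would first set $\psi_{M,N} := \phi'_{M,N} \circ \phi_{M,N}^{-1}$, a natural isomorphism $\iHom(N, \Ser(M)) \to \iHom(N, \Ser'(M))$. Regarding $\Ser$ and $\Ser'$ as twisted $\mathcal{C}$-module functors ${}_{\overline{\DD}}\mathcal{M} \to \mathcal{M}$, the construction of the twisted module structure recalled just above the lemma makes $\phi$ and $\phi'$ into isomorphisms of $\mathcal{C}$-bimodule functors $\mathcal{M}^{\op} \times {}_{\overline{\DD}}\mathcal{M} \to \mathcal{C}$; consequently $\psi$ is also an isomorphism of $\mathcal{C}$-bimodule functors, namely $\underline{H}_{\Ser} \to \underline{H}_{\Ser'}$. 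By Lemma~\ref{lem:mod-prof-ff} the functor \eqref{eq:mod-prof-construction-2} is fully faithful, so $\psi$ is the image of a unique isomorphism $\theta: \Ser \to \Ser'$ of twisted $\mathcal{C}$-module functors. Unwinding the definition of \eqref{eq:mod-prof-construction-2} on morphisms, the $(N, M)$-component of the image of $\theta$ is $\iHom(N, \theta_M)$; setting this equal to $\psi_{M,N}$ is precisely \eqref{eq:rel-Ser-uniqueness}.

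For the uniqueness of $\theta$ as a natural isomorphism of plain functors, suppose $\theta': \Ser \to \Ser'$ is another natural isomorphism satisfying \eqref{eq:rel-Ser-uniqueness}. Then $\iHom(N, \theta'_M) = \psi_{M,N} = \iHom(N, \theta_M)$ for all $M, N \in \mathcal{M}$. Applying $\Hom_{\mathcal{C}}(\unitobj, -)$ and using the adjunction $\Hom_{\mathcal{C}}(\unitobj, \iHom(N, P)) \cong \Hom_{\mathcal{M}}(N, P)$ yields $\theta'_M \circ g = \theta_M \circ g$ for every $g \in \Hom_{\mathcal{M}}(N, \Ser(M))$ and every $N \in \mathcal{M}$; taking $N = \Ser(M)$ and $g = \id_{\Ser(M)}$ gives $\theta'_M = \theta_M$ for each $M$, so $\theta' = \theta$.

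The only real care I anticipate is bookkeeping: one must verify that the $\mathcal{C}$-bimodule structure on the dualized internal Hom functor $\iHom(-,-)^*$ coming from $\mathfrak{c}$ is precisely the one with respect to which $\Ser$ and $\Ser'$ are declared to be twisted $\mathcal{C}$-module functors, so that $\phi$ and $\phi'$ actually qualify as morphisms of $\mathcal{C}$-bimodule functors in the sense required by Lemma~\ref{lem:mod-prof-ff}. Once that alignment is in place, the proof is a direct invocation of Lemma~\ref{lem:mod-prof-ff} together with the one-line Yoneda argument above.
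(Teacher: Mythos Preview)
Your proposal is correct and follows essentially the same approach as the paper: both define $\psi = \phi'\phi^{-1}$, observe that it is an isomorphism of $\mathcal{C}$-bimodule functors $\mathcal{M}^{\op}\times{}_{\overline{\DD}}\mathcal{M}\to\mathcal{C}$, and invoke Lemma~\ref{lem:mod-prof-ff} to produce $\theta$ as an isomorphism of twisted module functors, with a Yoneda argument for uniqueness. The only cosmetic differences are that the paper proves uniqueness first and writes out an explicit formula for $\theta_M$ rather than appealing to Yoneda abstractly, and your final caveat about bookkeeping is exactly the content of the paragraph preceding the lemma in the paper.
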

\begin{proof}
  Equation \eqref{eq:rel-Ser-uniqueness} is equivalent to $\iHom(N, \theta_M) = \psi_{M,N}$, where
  \begin{equation*}
    \psi_{M,N} := \phi_{M,N}^{-1} \circ \phi'_{M,N}: \iHom(N, \Ser'(M)) \to \iHom(N, \Ser(M))
    \quad (M, N \in \mathcal{M}).
  \end{equation*}
  The existence and the uniqueness of a natural transformation $\theta: \Ser \to \Ser'$ satisfying \eqref{eq:rel-Ser-uniqueness} follow from \eqref{eq:iHom-def-1} and the Yoneda lemma.
  Since $\psi$ is in fact an isomorphism of $\mathcal{C}$-bimodule functors from $\mathcal{M}^{\op} \times {}_{\overline{\DD}} \mathcal{M}$ to $\mathcal{C}$, the natural transformation $\theta$ is in fact an isomorphism of twisted $\mathcal{C}$-module functors by Lemma~\ref{lem:mod-prof-ff}.
\end{proof}

\subsection{The `standard' realization of a relative Serre functor}
\label{subsec:std-realization}

Let $\mathcal{C}$ be a finite tensor category, and let $\mathcal{M}$ be an exact left $\mathcal{C}$-module category.
One of our aims is to give an explicit description of the structure morphism of a relative Serre functor $\Ser$ of $\mathcal{M}$. However, the functor $\Ser$ is determined only up to isomorphism and a description of its structure morphism depends on how we realize $\Ser$. For this reason, we choose the `standard' relative Serre functor as follows: From now on, we assume that a right adjoint $Y_M$ of the functor $T_M$ given by \eqref{eq:iHom-def-0} and a right adjoint of $Y_M$ are fixed for each object $M \in \mathcal{M}$.

\begin{definition}
  \label{def:rel-Serre-std}
  The {\em standard relative Serre functor} of $\mathcal{M}$ is a relative Serre functor $(\Ser, \phi)$ of $\mathcal{M}$ given as follows: As a functor, $\Ser$ is given by~\eqref{eq:rel-Ser-standard}. The natural isomorphism $\phi$ is induced by the natural isomorphism
  \begin{equation*}
    \Hom_{\mathcal{C}}(X, \iHom(N, \Ser(M)))
    \xrightarrow{\quad \text{\eqref{eq:rel-Ser-std-iso-1}--\eqref{eq:rel-Ser-std-iso-4}} \quad}
    \Hom_{\mathcal{C}}(X, \iHom(M, N)^*)
  \end{equation*}
  for $X \in \mathcal{C}$ and $M, N \in \mathcal{M}$.
\end{definition}

We give a description of the twisted module structure of the standard relative Serre functor of $\mathcal{M}$. For this purpose, we introduce the `trace' on an object of $\mathcal{M}$ and prove Lemma~\ref{lem:rel-Ser-pairing} below.

\begin{definition}
  \label{def:rel-Serre-trace}
  Let $\Ser = (\Ser, \phi)$ be a relative Serre functor $(\Ser, \phi)$ of $\mathcal{M}$. Then the {\em trace} with respect to $\Ser$ is the family $\itrace = \{ \itrace_M : \iHom(M, \Ser(M)) \to \unitobj \}_{M \in \mathcal{M}}$ of morphisms in $\mathcal{C}$ defined by
  \begin{equation*}
    \itrace_{M} := \Big( \iHom(M, \Ser(M))
    \xrightarrow{\quad \phi_{M, M} \quad}
    \iHom(M, M)^*
    \xrightarrow{\quad \icoev_{\unitobj,M}^* \quad}
    \unitobj^* = \unitobj \Big)
  \end{equation*}
  for $M \in \mathcal{M}$.
\end{definition}

Let $\Ser$ and $\Ser'$ be relative Serre functors of $\mathcal{M}$, and let $\itrace$ and $\itrace'$ denote traces with respect to $\Ser$ and $\Ser'$, respectively. Then we have
\begin{equation}
  \label{eq:rel-Ser-trace-1}
  \itrace'_M = \itrace_M \circ \iHom(M, \theta_M)
  \quad (M \in \mathcal{M}),
\end{equation}
where $\theta: \Ser' \to \Ser$ is the isomorphism given by Lemma \ref{lem:rel-Ser-uniqueness}.

\begin{lemma}
  \label{lem:rel-Ser-pairing}
  Let $\Ser = (\Ser, \phi)$ be a relative Serre functor of $\mathcal{M}$, and let $\itrace$ be the trace with respect to $\Ser$. Then the following hold:
  \begin{itemize}
  \item [(a)] For all $M, N \in \mathcal{M}$, we have
    \begin{equation*}
      \itrace_M \circ \icomp_{M, N, \Ser(M)} 
      = \eval_{\iHom(M, N)} \circ (\phi_{M,N} \otimes \id_{\iHom(M, N)})
    \end{equation*}
  \item [(b)] If $(\Ser, \phi)$ is the standard relative Serre functor, then we have
    \begin{equation*}
      \itrace_M = \varepsilon_M(\unitobj):
      Y_M^{} Y_M^{\radj}(\unitobj) \to \unitobj
      \quad (M \in \mathcal{M}),
    \end{equation*}
    where $\varepsilon_M: Y_M^{} Y_M^{\radj} \to \id_{\mathcal{C}}$ is the counit.
  \end{itemize}
\end{lemma}
\begin{proof}
  In view of \eqref{eq:rel-Ser-trace-1}, we may assume that $(\Ser, \phi)$ is the standard relative Serre functor. Let $X \in \mathcal{C}$ and $M, N \in \mathcal{M}$ be arbitrary objects. By the definition of the isomorphism $\phi_{M,N}$, we have the following commutative diagram:
  \begin{equation*}
    \begin{tikzcd}[column sep = 48pt]
      \Hom_{\mathcal{C}}(X, \iHom(N, \Ser(M)))
      \arrow[dd, "{\Hom_{\mathcal{C}}(X, \phi_{M,N})}"']
      \arrow[r, "{\eqref{eq:rel-Ser-std-iso-1}}"]
      & \Hom_{\mathcal{M}}(X \catactl N, Y_M^{\radj}(\unitobj))
      \arrow[d, "{\eqref{eq:rel-Ser-std-iso-2}}"] \\
      & \Hom_{\mathcal{M}}(\iHom(M, X \catactl N), \unitobj)
      \arrow[d, "{\eqref{eq:rel-Ser-std-iso-3}}"] \\
      \Hom_{\mathcal{C}}(X, \iHom(M, N)^*)
      & \Hom_{\mathcal{M}}(X \otimes \iHom(M, N), \unitobj)
      \arrow[l, "{\eqref{eq:rel-Ser-std-iso-4}}"']
    \end{tikzcd}
  \end{equation*}
  We now consider the case where $X = \iHom(N, \Ser(M))$ and chase the identity morphism $\id_X$ around the above diagram:
  \begin{align*}
    \id_X
    \xmapsto{\makebox[3em]{\scriptsize \eqref{eq:rel-Ser-std-iso-1}}}
    & \, \ieval_{N, \Ser(M)}
      \xmapsto{\makebox[3em]{\scriptsize \eqref{eq:rel-Ser-std-iso-2}}}
      \, \varepsilon_M(\unitobj) \circ \iHom(M, \ieval_{M, \Ser(M)}) \\
    \xmapsto{\makebox[3em]{\scriptsize \eqref{eq:rel-Ser-std-iso-3}}}
    & \, \varepsilon_M(\unitobj) \circ \iHom(M, \ieval_{M, \Ser(M)})
      \circ \mathfrak{a}_{\iHom(N, \Ser(M)), M, N} \\
    = & \, \varepsilon_M(\unitobj) \circ \icomp_{M, N, \Ser(M)}.
  \end{align*}
  Thus we have the following equation:
  \begin{equation}
    \label{eq:rel-Ser-pairing-proof-1}
    \eval_{\iHom(M, N)} \circ (\phi_{M,N}^{} \otimes \id_{\iHom(M, N)})
    = \varepsilon_M(\unitobj) \circ \icomp_{M, N, \Ser(M)}.
  \end{equation}
  The formula of Part (b) is obtained by letting $M = N$ and precomposing
  \begin{equation*}
    \id_{\iHom(M, \Ser(M))} \otimes \icoev_{\unitobj, M}:
    \iHom(M, \Ser(M)) \to \iHom(M, \Ser(M)) \otimes \iHom(M, M)
  \end{equation*}
  to both sides of~\eqref{eq:rel-Ser-pairing-proof-1}. Part (a) follows from Part (b) and \eqref{eq:rel-Ser-pairing-proof-1}.
\end{proof}

We recall that $\REX_{\mathcal{C}}(\mathcal{M}, \mathcal{C})$ and $\LEX_{\mathcal{C}}(\mathcal{C}, \mathcal{M})$ are $\mathcal{C}$-module categories by the actions given by \eqref{eq:REX-M-N-right-action} and \eqref{eq:LEX-M-N-left-action} with $\mathcal{N} = \mathcal{C}$, respectively. We also remark:
\begin{itemize}
\item Since $\mathcal{M}$ is exact, we have a $\bfk$-linear functor $\Yone: \mathcal{M} \to \REX_{\mathcal{C}}(\mathcal{M}, \mathcal{C})^{\op}$ given by $\Yone(M) = \iHom(M, -)$. The left $\mathcal{C}$-module structure of $\iHom$ induces a natural isomorphism $\Yone(X \catactl M) \cong \Yone(M) \catactr X^{*}$ for $X \in \mathcal{C}$ and $M \in \mathcal{M}$.
  In view of our convention \eqref{eq:REX-M-N-op-left-action} on the left action of $\mathcal{C}$ on $\REX_{\mathcal{C}}(\mathcal{M}, \mathcal{C})^{\op}$, we may say that $\Yone$ is in fact a $\bfk$-linear left $\mathcal{C}$-module functor
  \begin{equation*}
    \Yone: \mathcal{M} \to {}_{\DD} (\REX_{\mathcal{C}}(\mathcal{M}, \mathcal{C})^{\op}).
  \end{equation*}
\item There is an equivalence of $\bfk$-linear categories
  \begin{equation}
    \label{eq:equivalence-eval-at-1}
    \EvalAtOne: \LEX_{\mathcal{C}}(\mathcal{C}, \mathcal{M}) \to \mathcal{M},
    \quad \EvalAtOne(F) = F(\unitobj).
  \end{equation}
  This functor is an equivalence of left $\mathcal{C}$-module categories, since
  \begin{equation*}
    \EvalAtOne(X \catactl F) = F(X) \cong X \catactl F(\unitobj)
    \quad (F \in \LEX_{\mathcal{C}}(\mathcal{C}, \mathcal{M}), X \in \mathcal{C}).
  \end{equation*}
\end{itemize}
We now consider the composition
\begin{equation}
  \label{eq:FSS-rel-Serre-formula}
  \mathcal{M}
  \xrightarrow{\quad \Yone \quad}
  {}_{\DD} (\REX_{\mathcal{C}}(\mathcal{M}, \mathcal{C})^{\op})
  \xrightarrow{\quad (-)^{\radj} \quad}
  {}_{\DD} (\LEX_{\mathcal{C}}(\mathcal{C}, \mathcal{M}))
  \xrightarrow{\quad \EvalAtOne \quad}
  {}_{\DD} \mathcal{M}
\end{equation}
of left $\mathcal{C}$-module functors. It is easy to see that the functor \eqref{eq:FSS-rel-Serre-formula} is identical to the standard relative Serre functor on the level of functors. Furthermore,

\begin{theorem}
  \label{thm:mod-str-standard-rel-Serre-1}
  The standard relative Serre functor of $\mathcal{M}$ is equal to \eqref{eq:FSS-rel-Serre-formula} as a left $\mathcal{C}$-module functor from $\mathcal{M}$ to ${}_{\DD}\mathcal{M}$.
\end{theorem}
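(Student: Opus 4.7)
My plan is to invoke the uniqueness clause of Lemma~\ref{lem:rel-Ser-uniqueness}. Since the composition \eqref{eq:FSS-rel-Serre-formula} and the standard relative Serre functor $(\Ser, \phi)$ agree as underlying functors, it will suffice to endow $\Ser$ with the twisted $\mathcal{C}$-module structure $\xi$ coming from \eqref{eq:FSS-rel-Serre-formula} and then check that $\phi$ (as defined by the chain \eqref{eq:rel-Ser-std-iso-1}--\eqref{eq:rel-Ser-std-iso-4}) is an isomorphism of $\mathcal{C}$-bimodule functors $\iHom(-, -)^{*} \to \iHom(-, \Ser(-))$, where the source is viewed as a bimodule functor $\mathcal{M}^{\op} \times {}_{\overline{\DD}}\mathcal{M} \to \mathcal{C}$ via \eqref{eq:iHom-dual-bimod}. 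Lemma~\ref{lem:rel-Ser-uniqueness} then identifies $\xi$ with the twisted module structure of the standard relative Serre functor.

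First I would unpack $\xi_{X,M} \colon X^{**} \catactl \Ser(M) \to \Ser(X \catactl M)$ as the composition of three contributions. The functor $\Yone$ contributes the isomorphism $Y_M \catactr X^* \cong Y_{X \catactl M}$ inverse to the structure morphism $\mathfrak{b}$ of Lemma~\ref{lem:iHom-bimodule-functor}, reinterpreted through the $\op$ and $\DD$ twists so as to yield a map $X^{**} \catactl \Yone(M) \to \Yone(X \catactl M)$ in ${}_{\DD}(\REX_{\mathcal{C}}(\mathcal{M}, \mathcal{C})^{\op})$. The equivalence $(-)^{\radj}$ contributes the canonical comparison $\gamma_{R_X, Y_M}$ of \eqref{eq:right-adj-uniqueness} identifying $(Y_M \circ R_X)^{\radj}$ with $R_X^{\radj} \circ Y_M^{\radj}$. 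Finally, $\mathbbmss{E}$ contributes the identification $X \catactl Y_M^{\radj}(\unitobj) \cong Y_M^{\radj}(X)$ coming from the lax left $\mathcal{C}$-module structure of $Y_M^{\radj}$ (Lemma~\ref{lem:C-mod-func-adj}). Composing these yields an explicit formula for $\xi_{X,M}$ in terms of $\mathfrak{b}$ and the unit and counit of the adjunction $T_M \dashv Y_M$.

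I would then verify bimodule compatibility of $\phi$ with $\xi$ by tracing through \eqref{eq:rel-Ser-std-iso-1}--\eqref{eq:rel-Ser-std-iso-4}. Compatibility with the left action of $\mathcal{C}$ on the second argument of $\iHom$ follows from the lax module structure $\mathfrak{a}$ of $\iHom$ together with the tensor-Hom manipulations used in passing from \eqref{eq:rel-Ser-std-iso-2} to \eqref{eq:rel-Ser-std-iso-3}. Compatibility with the right action of $\mathcal{C}$ (equivalently, the $\overline{\DD}$-twisted left action on the source of $\phi$ coming from \eqref{eq:iHom-dual-bimod}) reduces to the interaction of $\mathfrak{b}$ with the uniqueness morphism $\gamma$, which is precisely the content of the coherence diagrams \eqref{eq:right-adj-uniqueness-1} and \eqref{eq:right-adj-uniqueness-2} combined with Remark~\ref{rem:iHom-mod-fun-adj} applied to the strong module functor $T_M$.

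The hard part will be the bookkeeping of dualities: $\mathfrak{b}$ inserts a left dual ${}^{*}(-)$, the comparison $\gamma$ threads right adjoints of the functors $R_X$, and the isomorphism \eqref{eq:iHom-dual-bimod} introduces the double left dual $\DD(Y) = Y^{**}$. Aligning all of these duality insertions so that the two twisted $\mathcal{C}$-module structures on $\Ser$ match on the nose is the main technical content. Once the conventions are fixed consistently, the theorem reduces to a diagram chase combining \eqref{eq:right-adj-uniqueness-1}--\eqref{eq:right-adj-uniqueness-3} with the module-structure descriptions of $\iHom$ in Lemma~\ref{lem:iHom-bimodule-functor}.
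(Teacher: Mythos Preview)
Your overall strategy matches the paper's: both reduce to showing that the defining isomorphism $\phi$ is a $\mathcal{C}$-bimodule transformation once the target is equipped with the twisted structure $\xi$ coming from \eqref{eq:FSS-rel-Serre-formula}, and both unpack $\xi$ as a composite of the structure morphism $\mathfrak{b}$ of $\iHom$, the canonical comparison $\gamma$ of right adjoints, and the lax module structure of $Y_M^{\radj}$. Two remarks on the reduction: the paper invokes Lemma~\ref{lem:mod-prof-ff} directly rather than Lemma~\ref{lem:rel-Ser-uniqueness}, and it observes that the \emph{right} $\mathcal{C}$-module structure of the target $\iHom(N,\Ser(M))$ is carried by $\mathfrak{b}$ in the variable $N$ and does not see $\xi$ at all, so only the \emph{left} compatibility (in the variable $M$) requires work. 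Your exposition swaps the left/right labels relative to the paper's conventions; the substantive check is the one you attach to $\mathfrak{b}$ and $\gamma$.

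The execution, however, is organized differently. Rather than chasing the chain \eqref{eq:rel-Ser-std-iso-1}--\eqref{eq:rel-Ser-std-iso-4} directly as you propose, the paper first converts the left-module compatibility diagram into a statement about the pairing $\iHom(N,\Ser(M))\otimes\iHom(M,N)\to\unitobj$ via Lemma~\ref{lem:rel-Ser-pairing}, which factors that pairing as $\itrace_M\circ\icomp$. The heart of the argument then becomes a single relation between $\itrace_M$ and $\itrace_{X\catactl M}$ (the commutative square \eqref{eq:mod-str-rel-Serre-proof-5}), and this is where the adjoint coherence enters: the paper uses the \emph{associativity} diagram \eqref{eq:right-adj-uniqueness-3} for $\gamma$ (applied to $R_{X^*}$, $Y_M$, and their adjoints) rather than the unit/counit diagrams \eqref{eq:right-adj-uniqueness-1}--\eqref{eq:right-adj-uniqueness-2} you cite. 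Your direct approach via Remark~\ref{rem:iHom-mod-fun-adj} should still go through, but the trace reformulation isolates the coherence step more cleanly and avoids tracking the full Hom-level isomorphisms throughout.
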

\begin{proof}
  Let $(\Ser, \phi)$ be the standard relative Serre functor of $\mathcal{M}$, and let $\Ser'$ be the left $\mathcal{C}$-module functor given by \eqref{eq:FSS-rel-Serre-formula}. As we have remarked, $\Ser = \Ser'$ as functors. We would like to show $\Ser = \Ser'$ as twisted module functors. By Lemma~\ref{lem:mod-prof-ff} and the definition of $\Ser$, this is equivalent to that the natural isomorphism
  \begin{equation*}
    \phi_{M,N}: \iHom(M, N)^* \to \iHom(N, \Ser'(M))
  \end{equation*}
  is an isomorphism of $\mathcal{C}$-bimodule functors. Since the right $\mathcal{C}$-module structure of the target of $\phi$ does not depend on the module structure of $\Ser'$, it is enough to show that $\phi$ is an isomorphism of left $\mathcal{C}$-module functors, that is, the diagram
  \begin{equation}
    \label{eq:mod-str-rel-Serre-proof-1}
    \begin{tikzcd}[column sep = 64pt]
      X^{**} \otimes \iHom(N, \Ser'(M))
      \arrow[d, "{\mathfrak{a}_{X^{**}, N, \Ser'(M)}}"']
      \arrow[r, "{X^{**} \otimes \phi_{M,N}}"]
      & X^{**} \otimes \iHom(M, N)^* \\
      \iHom(N, X^{**} \catactl \Ser'(M))
      \arrow[d, "{\iHom(N, \xi_{X,M})}"']
      & (\iHom(M, N) \otimes X^*)^*
      \arrow[u, equal]
      \arrow[d, "{(\mathfrak{b}_{M, N, X^*}^{-1})^*}"] \\
      \iHom(N, \Ser'(X \catactl M))
      \arrow[r, "{\phi_{X \catactl M,N}}"]
      & \iHom(X \catactl M, N)^*
    \end{tikzcd}
  \end{equation}
  commutes for $X \in \mathcal{C}$ and $M, N \in \mathcal{M}$, where $\mathfrak{a}$ and $\mathfrak{b}$ denote the left and the right $\mathcal{C}$-module structure of $\iHom$ (see Subsection \ref{subsec:internal-hom}) and
  \begin{equation*}
    \xi_{X,M}: X^{**} \catactl \Ser'(M) \to \Ser'(X \catactl M)
    \quad (X \in \mathcal{C}, M \in \mathcal{M})
  \end{equation*}
  is the twisted left $\mathcal{C}$-module structure of $\Ser'$. To save space, we write the internal Hom functor $\iHom(M, N)$ as $[M, N]$. By the canonical isomorphism
  \begin{align*}
    & \Hom_{\mathcal{C}}(X^{**} \otimes [N, \Ser'(M)], X^{**} \otimes [M, N]^*) \\
    & \qquad \cong \Hom_{\mathcal{C}}(X^{**} \otimes [N, \Ser'(M)] \otimes [M, N] \otimes X^*, \unitobj)
  \end{align*}
  and Lemma~\ref{lem:rel-Ser-pairing}, the commutativity of~\eqref{eq:mod-str-rel-Serre-proof-1} is equivalent to that of
  \begin{equation}
    \label{eq:mod-str-rel-Serre-proof-2}
    {\footnotesize
      \begin{tikzcd}[column sep = 80pt]
        X^{**} \otimes [N, \Ser'(M)] \otimes [M, N] \otimes X^*
        \arrow[d, "{\mathfrak{a} \otimes \mathfrak{b}}"']
        \arrow[r, "{X^{**} \otimes \icomp \otimes X^*}"]
        & X^{**} \otimes [M, \Ser'(M)] \otimes X^*
        \arrow[d, "{X^{**} \otimes \itrace_M \otimes X^*}"] \\
        {} [N, X^{**} \catactl \Ser'(M)] \otimes [X \catactl M, N]
        \arrow[d, "{[N, \xi_{X,M}] \otimes [X \catactl M, N]}"']
        & X^{**} \otimes X^* \arrow[d, "{\eval_{X^*}}"] \\
        {} [N, \Ser'(X \catactl M)] \otimes [X \catactl M, N]
        \arrow[r, "{\itrace_{X \catactl M} \circ \icomp_{X \catactl M, N, \Ser'(X \catactl M)}}"]
        & \unitobj.
      \end{tikzcd}}
  \end{equation}

  To verify the commutativity of \eqref{eq:mod-str-rel-Serre-proof-2}, we first recall the definition of the module structures of $\Ser'$ and related functors. We fix objects $X \in \mathcal{C}$ and $M \in \mathcal{M}$ and define $\beta_{X,M}$ by \eqref{eq:iHom-mod-right-00}. We also define $\alpha_{X,M}$ by the following composition:
  \begin{equation*}
    \alpha_{X,M} = \Big(
    Y_M^{\radj} \circ R_{X^{**}} = Y_M^{\radj} \circ R_{X^*}^{\radj}
    \xrightarrow[\cong]{\ \eqref{eq:right-adj-uniqueness}\ }
    (R_{X^*} \circ Y_M)^{\radj}
    \xrightarrow{\ (\beta_{X^*,M}^{-1})^{\radj} \ }
    Y^{\radj}_{X \catactl M}
    \Big).
  \end{equation*}
  Let $\kappa_{X,Y}: X \otimes Y_M^{\radj}(Y) \to Y_M^{\radj}(X \otimes Y)$ be the left $\mathcal{C}$-module structure of $Y_M^{\radj}$. Then the isomorphism $\xi_{X,M}$ is given by the composition
  \begin{equation}
    \label{eq:mod-str-rel-Serre-proof-4}
    \newcommand{\xarr}[1]{\xrightarrow{\makebox[4em]{$\scriptstyle #1$}}}
    \begin{aligned}
      \xi_{X,M} = \Big(
      X^{**} \catactl Y_M^{\radj}(\unitobj)
      \xarr{\kappa_{X^{**}, \unitobj}}
      Y_M^{\radj}(X^{**})
      \xarr{\alpha_{X,M}(\unitobj)}
      Y_{X \catactl M}^{\radj}(\unitobj)  \Big).
    \end{aligned}
  \end{equation}

  Next, we prove the commutativity of the diagram \eqref{eq:mod-str-rel-Serre-proof-5} below, which gives a relation between $\itrace_{M}$ and $\itrace_{X \catactl M}$. Given $M \in \mathcal{M}$, we denote by $\varepsilon_M$ the counit of the adjunction $Y_M \dashv Y_M^{\radj}$ as in Lemma~\ref{lem:rel-Ser-pairing} (b). By the definition of $\alpha$ and $\beta$, the following diagram commutes:
  \begin{equation}
    \label{eq:mod-str-rel-Serre-proof-3}
    \begin{tikzcd}[column sep = 96pt]
      R_{X^*} \circ Y_M \circ Y_M^{\radj} \circ R_{X^{**}}
      \arrow[r, "{R_{X^*} \circ \varepsilon_M \circ R_{X^{**}}}"]
      \arrow[d, "{\beta_{X^*,M} \circ \alpha_{X,M}}"']
      & R_{X^*} \circ R_{X^{**}}
      \arrow[d, "{\eval_{X^*}}"] \\
      Y_{X \catactl M} \circ Y_{X \catactl M}^{\radj}
      \arrow[r, "{\varepsilon_{X \catactl M}}"]
      & \id_{\mathcal{C}}
    \end{tikzcd}
  \end{equation}
  Now we consider the following diagram:
  \begin{equation*}
    \begin{tikzcd}[column sep = 96pt]
      X^{**} \otimes \iHom(M, Y_M^{\radj}(\unitobj)) \otimes X^*
      \arrow[r, "{X^{**} \otimes \varepsilon_M(\unitobj) \otimes X^*}"]
      \arrow[d, "{\mathfrak{a} \otimes X^*}"']
      & X \otimes X^* \arrow[dd, equal] \\
      \iHom(M, X^{**} \catactl Y_M^{\radj}(\unitobj)) \otimes X^*
      \arrow[d, "{\iHom(M, \kappa_{X^{**}, \unitobj}) \otimes X^*}"'] \\
      \iHom(M, Y_M^{\radj}(X^{**})) \otimes X^*
      \arrow[r, "{\varepsilon_{M}(X^{**}) \otimes X^*}"]
      \arrow[d, "{\mathfrak{b}}"']
      & X \otimes X^* \arrow[dd, "{\eval_{X^*}}"] \\
      \iHom(X \catactl M, Y_{M}^{\radj}(X^{**}))
      \arrow[d, "{\iHom(X \catactl M, \alpha_{X,M}(\unitobj)}"'] \\
      \iHom(X \catactl M, Y_{X \catactl M}^{\radj}(X))
      \arrow[r, "{\varepsilon_{X \catactl M}(\unitobj)}"]
      & \unitobj
    \end{tikzcd}
  \end{equation*}
  By Lemma \ref{lem:C-mod-func-adj}, the counit $\varepsilon_M$ is a morphism of $\mathcal{C}$-module functors. Thus the top square commutes. By evaluating \eqref{eq:mod-str-rel-Serre-proof-3} at $\unitobj$, we see that the bottom square also commutes. By~\eqref{eq:mod-str-rel-Serre-proof-4} and the naturality of $\mathfrak{b}$, the composition along the first column of the above diagram is equal to the composition
  \begin{equation*}
    \newcommand{\xarr}[1]{\xrightarrow{\makebox[7em]{$\scriptstyle #1$}}}
    \begin{aligned}
      X^{**} \otimes \iHom(M, \Ser'(M)) \otimes X^*
      & \xarr{\mathfrak{c}_{X^{**}, M, \Ser'(M), X^*}}
      \iHom(X \catactl M, X^{**} \otimes \Ser'(M)) \\
      & \xarr{\iHom(X \catactl M, \xi_{X,M})}
      \iHom(X \catactl M, X^{**} \otimes \Ser'(M)),
    \end{aligned}
  \end{equation*}
  where $\mathfrak{c}$ is the $\mathcal{C}$-bimodule structure of $\iHom$. Hence we obtain the following commutative diagram:
  \begin{equation}
    \label{eq:mod-str-rel-Serre-proof-5}
    \begin{tikzcd}[column sep = 96pt]
      X^{**} \otimes \iHom(M, \Ser'(M)) \otimes X^*
      \arrow[r, "{X^{**} \otimes \itrace_{M} \otimes X^*}"]
      \arrow[d, "{\mathfrak{c}_{X^{**},M,\Ser'(M),X^*}}"']
      & X^{**} \otimes X^* \arrow[dd, "{\eval_{X^*}}"] \\
      \iHom(X \catactl M, X^{**} \catactl \Ser'(M))
      \arrow[d, "{\iHom(X \catactl M, \xi_{X,M})}"'] \\
      \iHom(X \catactl M, \Ser'(X \catactl M))
      \arrow[r, "{\itrace_{X \catactl M}}"]
      & \unitobj
    \end{tikzcd}
  \end{equation}

  Now we complete the proof of this theorem by showing that the diagram~\eqref{eq:mod-str-rel-Serre-proof-2} commutes. We refine the diagram~\eqref{eq:mod-str-rel-Serre-proof-2} as follows:
  \begin{equation*}
    \footnotesize
    \begin{tikzcd}[column sep = 36pt, row sep = 24pt]
      X^{**} \otimes [N, \Ser'(M)] \otimes [M, N] \otimes X^*
      \arrow[r, "{\id \otimes \icomp \otimes \id}" {yshift=3pt}]
      \arrow[d, "{\mathfrak{a} \otimes \mathfrak{b}}"']
      \arrow[rd, phantom, "{(\spadesuit)}"]
      & X^{**} \otimes [M, \Ser'(M)] \otimes X^*
      \arrow[d, "{\mathfrak{c}}"]
      \arrow[r, "{\id \otimes \itrace_M \otimes \id}" {yshift=3pt}]
      & X^{**} \otimes X^*
      \arrow[dd, "{\eval_{X^*}}"] \\
      {} [N, X^{**} \catactl \Ser'(M)] \otimes [X \catactl M, N]
      \arrow[r, "{\icomp}"']
      \arrow[d, "{[N, \xi_{X,M}] \otimes \id}"']
      & {} [X \catactl M, X^{**} \catactl \Ser'(M)]
      \arrow[d, "{[X \catactl M, \xi_{X,M}] \otimes \id}"]
      & \arrow[lu, phantom, "{(\heartsuit)}"] \\
      {} [N, \Ser'(X \catactl M)] \otimes [X \catactl M, N]
      \arrow[r, "{\icomp}"']
      & {} [X \catactl M, \Ser'(X \catactl M)]
      \arrow[r, "{\itrace_{X \catactl M}}"']
      \ar[lu, phantom, "{(\clubsuit)}"]
      & \unitobj
    \end{tikzcd}
  \end{equation*}
  The cell $(\clubsuit)$ in the diagram is commutative because of the naturality of $\icomp$. The commutativity of $(\spadesuit)$ can be verified directly (some formulas given in Appendix of \cite{MR4077241} are helpful). The cell $(\heartsuit)$ is just the commutative diagram~\eqref{eq:mod-str-rel-Serre-proof-5}. The proof is done.
\end{proof}

\subsection{Relative Serre functor and adjoints}

Let $\mathcal{C}$ be a finite tensor category, and let $\mathcal{M}$ and $\mathcal{N}$ be exact left $\mathcal{C}$-module categories.
By the exactness of $\mathcal{N}$, the equivalence~\eqref{eq:adj-Rex-Lex} turns into an equivalence
\begin{equation*}
  (-)^{\radj}: \REX_{\mathcal{C}}(\mathcal{M}, \mathcal{N})^{\op} \to \REX_{\mathcal{C}}(\mathcal{N}, \mathcal{M}),
  \quad F \mapsto F^{\radj}
\end{equation*}
of $\bfk$-linear categories. We define a $\bfk$-linear autoequivalence
\begin{equation*}
  (-)^{\rradj}: \REX_{\mathcal{C}}(\mathcal{M}, \mathcal{N}) \to \REX_{\mathcal{C}}(\mathcal{M}, \mathcal{N}),
  \quad F \mapsto F^{\rradj} := (F^{\radj})^{\radj}
\end{equation*}
by taking a double right adjoint. This functor is given by a `conjugation' with relative Serre functors. More precisely, we have:

\begin{theorem}
  \label{thm:rel-Serre-double-right-adj}
  There is a natural isomorphism
  \begin{equation}
    \label{eq:rel-Serre-double-right-adj-iso}
    \theta_F: \Ser_{\mathcal{N}} \circ F \to F^{\rradj} \circ \Ser_{\mathcal{M}}
    \quad (F \in \REX_{\mathcal{C}}(\mathcal{M}, \mathcal{N}))
  \end{equation}
  of twisted left $\mathcal{C}$-module functors such that the diagram
  \begin{equation*}
    \begin{tikzcd}[column sep = 48pt]
      \Ser_{\mathcal{N}} \circ F \circ G
      \arrow[d, "{\theta_{F G}}"']
      \arrow[r, "{\theta_F \circ G}"]
      & F^{\rradj} \circ \Ser_{\mathcal{M}} \circ G
      \arrow[r, "{F^{\rradj} \circ \theta_G}"]
      & F^{\rradj} \circ G^{\rradj} \circ \Ser_{\mathcal{L}}
      \arrow[d, "{\gamma_{F^{\radj}, G^{\radj}} \circ \Ser_{\mathcal{L}}}"] \\
      (F \circ G)^{\rradj} \circ \Ser_{\mathcal{L}}
      \arrow[rr, "{\gamma_{F,G}^{\radj} \circ \Ser_{\mathcal{L}}}"]
      & & (F^{\radj} \circ G^{\radj})^{\radj} \circ \Ser_{\mathcal{L}}
    \end{tikzcd}
  \end{equation*}
  commutes for all $F \in \REX_{\mathcal{C}}(\mathcal{M}, \mathcal{N})$ and $G \in \REX_{\mathcal{C}}(\mathcal{L}, \mathcal{M})$, where $\gamma$ is the natural isomorphism~\eqref{eq:right-adj-uniqueness} and $\mathcal{L}$, $\mathcal{M}$ and $\mathcal{N}$ are exact left $\mathcal{C}$-module categories.
\end{theorem}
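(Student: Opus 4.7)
The plan is to construct $\theta_F$ by descending, via the fully faithful functor of Lemma~\ref{lem:mod-prof-ff}, a natural isomorphism of $\mathcal{C}$-bimodule functors $\mathcal{N}^{\op} \times \mathcal{M} \to \mathcal{C}$. For $N \in \mathcal{N}$ and $M \in \mathcal{M}$, I compose the chain
\begin{align*}
  \iHom_{\mathcal{N}}(N, \Ser_{\mathcal{N}} F(M))
  &\cong \iHom_{\mathcal{N}}(F(M), N)^{*} \\
  &\cong \iHom_{\mathcal{M}}(M, F^{\radj}(N))^{*} \\
  &\cong \iHom_{\mathcal{M}}(F^{\radj}(N), \Ser_{\mathcal{M}}(M)) \\
  &\cong \iHom_{\mathcal{N}}(N, F^{\rradj} \Ser_{\mathcal{M}}(M)),
\end{align*}
where the first and third arrows are the defining Serre isomorphisms for $\Ser_{\mathcal{N}}$ and $\Ser_{\mathcal{M}}$, the second is the dualized adjunction isomorphism $\phi^{F}$ from Remark~\ref{rem:iHom-mod-fun-adj}, and the fourth is the adjunction isomorphism $\phi^{F^{\radj}}$ for $F^{\radj} \dashv F^{\rradj}$.

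Each link in this chain is an isomorphism of (appropriately twisted) $\mathcal{C}$-bimodule functors $\mathcal{N}^{\op} \times \mathcal{M} \to \mathcal{C}$: the Serre isomorphisms are bimodule by the very definition of the twisted module structure on the Serre functors given in Subsection~\ref{subsec:mod-prof-bimod}, and the two $\phi^{(-)}$'s are bimodule by the last assertion of Remark~\ref{rem:iHom-mod-fun-adj}. Composing, we obtain a $\mathcal{C}$-bimodule natural isomorphism between the internal-Hom representatives of $\Ser_{\mathcal{N}} \circ F$ and $F^{\rradj} \circ \Ser_{\mathcal{M}}$, and Lemma~\ref{lem:mod-prof-ff} then produces a unique isomorphism $\theta_F \colon \Ser_{\mathcal{N}} \circ F \to F^{\rradj} \circ \Ser_{\mathcal{M}}$ of $\overline{\DD}$-twisted left $\mathcal{C}$-module functors that induces the chain. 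Naturality of $\theta_F$ in $F$ is automatic since every ingredient is natural in $F$.

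The main obstacle is the compatibility hexagon for $F \circ G$. The plan is to test it once again on internal Homs via the faithfulness in Lemma~\ref{lem:mod-prof-ff}: applying $\iHom_{\mathcal{N}}(L, -)$ to both legs of the hexagon turns it into an equality of two composites built out of the four-step chain above. Expanding, the two legs differ only in how one treats the adjunctions $FG \dashv (FG)^{\radj}$ and $(FG)^{\radj} \dashv (FG)^{\rradj}$ versus the iterated adjunctions $F \dashv F^{\radj}$, $G \dashv G^{\radj}$ (and their right adjoints). By the commutative square at the end of Remark~\ref{rem:iHom-mod-fun-adj}, this discrepancy is precisely an insertion of $\gamma_{F,G}$ and $\gamma_{F^{\radj}, G^{\radj}}$, and any remaining associativity is the pentagon~\eqref{eq:right-adj-uniqueness-3} applied in two instances. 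No new coherence input beyond~\eqref{eq:right-adj-uniqueness-3} and the Remark~\ref{rem:iHom-mod-fun-adj} square is needed; the verification is large but mechanical once the chain for $\theta$ is written symmetrically enough that the two legs of the hexagon can be matched term by term.
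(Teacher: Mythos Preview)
Your proposal is correct and follows essentially the same approach as the paper: the paper builds the identical four-step chain of internal-Hom isomorphisms, invokes Lemma~\ref{lem:mod-prof-ff} to descend to a twisted module-functor isomorphism, and cites the square in Remark~\ref{rem:iHom-mod-fun-adj} for the compatibility hexagon. Your write-up is in fact more explicit about why the hexagon holds (naming the pentagon~\eqref{eq:right-adj-uniqueness-3} in addition to the Remark~\ref{rem:iHom-mod-fun-adj} square), whereas the paper simply says the diagram ``follows from'' that remark.
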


Except the part that $\theta_F$ is a morphism of twisted $\mathcal{C}$-module functors, this theorem follows from a property of the Nakayama functor \cite[Theorem 1.8]{MR4042867} and a relation between the Nakayama functor and the relative Serre functor \cite[Theorem 4.26]{MR4042867}. Below we give a proof based on Lemma~\ref{lem:mod-prof-ff}.

\begin{proof}
  There are natural isomorphisms
  \begin{gather*}
    \iHom_{\mathcal{N}}(N, \Ser_{\mathcal{N}} F(M))
    \xrightarrow{\ \eqref{eq:rel-Serre-def} \ }
    \iHom_{\mathcal{N}}(F(M), N)^*
    \xrightarrow{\ \eqref{eq:iHom-mod-fun-adj-2} \ }
    \iHom_{\mathcal{M}}(M, F^{\radj}(N))^* \\
    \xrightarrow{\quad \eqref{eq:rel-Serre-def} \quad}
    \iHom_{\mathcal{M}}(F^{\radj}(N), \Ser_{\mathcal{M}}(M))
    \xrightarrow{\quad \eqref{eq:iHom-mod-fun-adj-2} \quad}
    \iHom_{\mathcal{M}}(N, F^{\rradj} \Ser_{\mathcal{M}}(M))
  \end{gather*}
  for $M, N \in \mathcal{M}$ and $F \in \REX_{\mathcal{C}}(\mathcal{M}, \mathcal{N})$. Each of them is in fact an isomorphism of $\mathcal{C}$-bimodule functors from $\mathcal{N}^{\op} \times {}_{\overline{\DD}}\mathcal{M}$ to $\mathcal{C}$. Thus, by Lemma~\ref{lem:mod-prof-ff}, we obtain an isomorphism $\theta_F: \Ser_{\mathcal{N}} F \to F^{\rradj} \Ser_{\mathcal{M}}$ of twisted left $\mathcal{C}$-module functors. It is routine to check that $\theta_F$ is natural in $F$. The commutativity of the diagram follows from the commutative diagram given in Remark~\ref{rem:iHom-mod-fun-adj}.
\end{proof}

\subsection{Pivotal module categories}
\label{subsec:piv-mod-categ}

Let $\mathcal{C}$ be a pivotal finite tensor category, that is, a finite tensor category equipped with an isomorphism $\mathfrak{p}: \id_{\mathcal{C}} \to \DD$ of tensor functors (often referred to as a {\em pivotal structure} of $\mathcal{C}$). Now we introduce a pivotal structure of an exact module category as follows:

\begin{definition}[{\it cf}. Schaumann {\cite[Remark 5.3]{MR3435098}}]
  \label{def:pivotal-module-cat}
  Let $\mathcal{M}$ be an exact left $\mathcal{C}$-module category, and let $(\Ser, \phi)$ be a relative Serre functor of $\mathcal{M}$. A {\em pivotal structure} of $\mathcal{M}$ (respecting the pivotal structure $\mathfrak{p}$ of $\mathcal{C}$) is an isomorphism $\widetilde{\mathfrak{p}}: \id_{\mathcal{M}} \to \Ser$ of functors such that the diagram
  \begin{equation*}
    \begin{tikzcd}[column sep = 64pt]
      X \catactl M
      \arrow[rd, "{\mathfrak{p}_X \catactl \widetilde{\mathfrak{p}}_{M}}"']
      \arrow[rr, "{\widetilde{\mathfrak{p}}_{X \catactl M}}"]
      & & \Ser(X \catactl M) \\
      & X^{**} \catactl \Ser(M)
      \arrow[ru, "{\xi_{X,M}}"']
    \end{tikzcd}
  \end{equation*}
  commutes for all objects $X \in \mathcal{C}$ and $M \in \mathcal{M}$, where $\xi$ is the twisted $\mathcal{C}$-module structure of $\Ser$. A {\em pivotal left $\mathcal{C}$-module category} is an exact left $\mathcal{C}$-module category endowed with a pivotal structure.
\end{definition}

Stated differently, a pivotal structure of $\mathcal{M}$ is an isomorphism of $\mathcal{C}$-module functors from $\id_{\mathcal{M}}$ to a relative Serre functor $\Ser$ regarded as a (non-twisted) left $\mathcal{C}$-module functor by the structure morphism given by
\begin{equation}
  \label{eq:rel-Serre-non-twisted}
  X \catactl \Ser(M)
  \xrightarrow{\quad \mathfrak{p}_X \catactl \id_{\Ser(M)} \quad}
  X^{**} \catactl \Ser(M)
  \xrightarrow{\quad \xi_{X,M} \quad} \Ser(X \catactl M)
\end{equation}
for $X \in \mathcal{C}$ and $M \in \mathcal{M}$.

Schaumann \cite{MR3435098} introduced and studied {\em inner-product structures} on module categories. As noted in \cite[Remark 5.3]{MR3435098}, such a structure exists only if the module category in concern is exact. Furthermore, inner product structures on an exact left $\mathcal{C}$-module category $\mathcal{M}$ are in bijection with pivotal structures of $\mathcal{M}$.
Hence a result on inner-product module categories can be translated into a result on pivotal module categories.
For example, by rephrasing \cite[Proposition 5.5]{MR3435098}, we obtain:

\begin{lemma}
  \label{lem:pivot-exact-mod-cat-unique}
  Let $\mathcal{M}$ be an indecomposable exact left module category over a pivotal finite tensor category. Then a pivotal structure of $\mathcal{M}$ is, if it exists, unique up to scalar multiple.
\end{lemma}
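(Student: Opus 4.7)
The plan is to reduce the uniqueness statement to the well-known fact that, for an indecomposable exact $\mathcal{C}$-module category $\mathcal{M}$, the $\bfk$-algebra of $\mathcal{C}$-module natural endomorphisms of $\id_{\mathcal{M}}$ equals $\bfk$.

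First I would fix a relative Serre functor $(\Ser, \phi)$ of $\mathcal{M}$ together with its twisted module structure $\xi_{X,M}: X^{**} \catactl \Ser(M) \to \Ser(X \catactl M)$, so that any two pivotal structures $\widetilde{\mathfrak{p}}, \widetilde{\mathfrak{p}}': \id_{\mathcal{M}} \to \Ser$ share the same target. Their ratio
\[
  \alpha := (\widetilde{\mathfrak{p}}')^{-1} \circ \widetilde{\mathfrak{p}}: \id_{\mathcal{M}} \to \id_{\mathcal{M}}
\]
is a natural automorphism of the identity functor. A short diagram chase, using the pivotal axiom for $\widetilde{\mathfrak{p}}$ and $\widetilde{\mathfrak{p}}'$, gives
\[
  \widetilde{\mathfrak{p}}_{X \catactl M} = \xi_{X,M} \circ (\mathfrak{p}_X \catactl \widetilde{\mathfrak{p}}_M),
  \qquad
  \widetilde{\mathfrak{p}}'_{X \catactl M} = \xi_{X,M} \circ (\mathfrak{p}_X \catactl \widetilde{\mathfrak{p}}'_M),
\]
so the factors involving $\xi$ and $\mathfrak{p}_X$ cancel and we obtain
\[
  \alpha_{X \catactl M} = \id_X \catactl \alpha_M
\]
for every $X \in \mathcal{C}$ and $M \in \mathcal{M}$. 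Hence $\alpha$ is an automorphism of $\id_{\mathcal{M}}$ as a left $\mathcal{C}$-module functor.

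The key ingredient is then to identify $\End_{\FUN_{\mathcal{C}}(\mathcal{M},\mathcal{M})}(\id_{\mathcal{M}})$. Since $\mathcal{M}$ is an exact left $\mathcal{C}$-module category, the dual category $\mathcal{C}_{\mathcal{M}}^* = \FUN_{\mathcal{C}}(\mathcal{M}, \mathcal{M})^{\op}$ is a finite multi-tensor category whose unit object is $\id_{\mathcal{M}}$; by the standard criterion (see \cite{MR2119143}), $\mathcal{C}_{\mathcal{M}}^*$ is actually a finite tensor category, i.e.\ has a simple unit object, precisely when $\mathcal{M}$ is indecomposable. Because $\bfk$ is algebraically closed, the endomorphism algebra of any simple object of a finite abelian $\bfk$-linear category is $\bfk$, so $\End_{\mathcal{C}}(\id_{\mathcal{M}}) = \bfk \cdot \id$.

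Applying this to $\alpha$ yields $\alpha = c \cdot \id_{\id_{\mathcal{M}}}$ for some $c \in \bfk^{\times}$, and therefore $\widetilde{\mathfrak{p}} = c \cdot \widetilde{\mathfrak{p}}'$. The only nontrivial step is the identification $\End(\id_{\mathcal{M}}) = \bfk$, which I would simply quote from the theory of exact module categories; the rest is a purely formal manipulation of the pivotal axiom.
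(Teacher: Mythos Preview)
Your proof is correct and follows essentially the same route as the paper: form the ratio of two pivotal structures, observe (via the pivotal axiom) that it is an automorphism of $\id_{\mathcal{M}}$ in $\mathcal{C}_{\mathcal{M}}^*$, and invoke Schur's lemma using that $\id_{\mathcal{M}}$ is simple when $\mathcal{M}$ is indecomposable. The paper compresses your diagram chase into a one-line reference to the reformulation of a pivotal structure as a $\mathcal{C}$-module isomorphism $\id_{\mathcal{M}}\to\Ser$ (with $\Ser$ untwisted via $\mathfrak{p}$), but the content is identical.
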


Given a finite left $\mathcal{C}$-module category $\mathcal{M}$, the {\em dual} of $\mathcal{C}$ with respect to $\mathcal{M}$ is the $\bfk$-linear abelian monoidal category defined and denoted by
\begin{equation*}
  \mathcal{C}_{\mathcal{M}}^* := \REX_{\mathcal{C}}(\mathcal{M}, \mathcal{M})^{\rev},
\end{equation*}
where the tensor product of $\REX_{\mathcal{C}}(\mathcal{M}, \mathcal{M})$ is given by the composition of module functors. It is known that $\mathcal{C}_{\mathcal{M}}^*$ is a finite multi-tensor category provided that the module category $\mathcal{M}$ is exact \cite{MR2119143}.

In the rest of this section, we exhibit some applications of a pivotal structure of a module category. The following theorem concerns a pivotal structure of the dual tensor category.


\begin{theorem}
  \label{thm:dual-tensor-cat-pivotal}
  If $\mathcal{M}$ is a pivotal left $\mathcal{C}$-module category, then $\mathcal{C}_{\mathcal{M}}^*$ is a pivotal finite multi-tensor category.
\end{theorem}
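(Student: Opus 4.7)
The plan is to construct a monoidal natural isomorphism $\pi: \id \to (-)^{**}$ on $\mathcal{C}_{\mathcal{M}}^*$ out of the pivotal structure $\widetilde{\mathfrak{p}}$ of $\mathcal{M}$ and the comparison isomorphism of Theorem~\ref{thm:rel-Serre-double-right-adj}. Recall that $\mathcal{C}_{\mathcal{M}}^*$ is already known to be a finite multi-tensor category (Etingof--Ostrik); with the conventions dictated by Theorem~\ref{thm:rel-Serre-double-right-adj}, the duals in $\mathcal{C}_{\mathcal{M}}^*$ are furnished by right adjoints of $\mathcal{C}$-module functors (which are again $\mathcal{C}$-module functors by Lemma~\ref{lem:C-mod-func-adj}), so that the double left dual of $F \in \mathcal{C}_{\mathcal{M}}^*$ is canonically $F^{\rradj}$.

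For every $F \in \mathcal{C}_{\mathcal{M}}^*$, I would define
\[
\pi_F = (\id_{F^{\rradj}} \ast \widetilde{\mathfrak{p}}^{-1}) \circ \theta_F \circ (\widetilde{\mathfrak{p}} \ast \id_F): F \to F^{\rradj},
\]
where $\theta_F: \Ser \circ F \to F^{\rradj} \circ \Ser$ is the twisted $\mathcal{C}$-module isomorphism provided by Theorem~\ref{thm:rel-Serre-double-right-adj}. The first task will be to check that $\pi_F$ lives in $\mathcal{C}_{\mathcal{M}}^*$, i.e., is a morphism of (untwisted) $\mathcal{C}$-module functors. By Definition~\ref{def:pivotal-module-cat}, $\widetilde{\mathfrak{p}}$ is an isomorphism $\id_{\mathcal{M}} \to \Ser$ of $\mathcal{C}$-module functors once $\Ser$ is equipped with the untwisted structure~\eqref{eq:rel-Serre-non-twisted} built from $\mathfrak{p}$; composing with $\widetilde{\mathfrak{p}}^{\pm 1}$ on either side of $\theta_F$ has the effect of inserting $\mathfrak{p}$ and $\mathfrak{p}^{-1}$ into the $\DD$-twisted compatibility of $\theta_F$, converting it into an untwisted one. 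A direct diagram chase confirms that $\pi_F$ is a $\mathcal{C}$-module morphism. Naturality in $F$ then follows from the naturality of $\theta_F$ in $F$ asserted in Theorem~\ref{thm:rel-Serre-double-right-adj}.

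The crux will be monoidality of $\pi$. Writing $\otimes$ for the tensor product in $\mathcal{C}_{\mathcal{M}}^*$ and $\gamma$ for the canonical iso from~\eqref{eq:right-adj-uniqueness}, I would verify that $\pi_{F \otimes G}$ agrees with $\pi_F \otimes \pi_G$ after transport along the canonical monoidal coherence $(F \otimes G)^{\rradj} \cong F^{\rradj} \otimes G^{\rradj}$ induced by $\gamma$. Substituting the definition of $\pi$ on both sides, the two intermediate $\widetilde{\mathfrak{p}}$'s in the middle cancel, and the required equality reduces precisely to the commutative hexagon in Theorem~\ref{thm:rel-Serre-double-right-adj}, which relates $\theta_{F \otimes G}$ to $\theta_F \ast G$, $F^{\rradj} \ast \theta_G$, and the coherence morphism $\gamma_{F^{\radj}, G^{\radj}}$. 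Unital compatibility $\pi_{\id_{\mathcal{M}}} = \id$ is immediate from the construction of $\theta$ given in the proof of Theorem~\ref{thm:rel-Serre-double-right-adj}.

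The main obstacle is the monoidality step above: it is essentially a formal consequence of the theorems already proved, but the bookkeeping of the $\gamma$-coherences, the pivotal structure $\mathfrak{p}$ of $\mathcal{C}$, and the two-sided insertions of $\widetilde{\mathfrak{p}}$ must all be aligned consistently, and the tensor-product convention for $\mathcal{C}_{\mathcal{M}}^*$ (composition versus reversed composition of module functors) has to be fixed at the outset so that $F^{\rradj}$ is genuinely identified with $F^{**}$ in $\mathcal{C}_{\mathcal{M}}^*$. Once these choices are made, the verification is a systematic diagram chase and the theorem follows.
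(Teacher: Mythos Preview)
Your proposal is correct and follows essentially the same approach as the paper: define the pivotal structure on $\mathcal{C}_{\mathcal{M}}^*$ by conjugating the isomorphism $\theta_F$ of Theorem~\ref{thm:rel-Serre-double-right-adj} with $\widetilde{\mathfrak{p}}$, and deduce monoidality from the commutative diagram in that theorem. The paper's proof is terser and leaves the bookkeeping you outline (the $\mathcal{C}$-module compatibility, naturality, and the $\gamma$-coherence check) implicit, but the construction and the key input are identical.
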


We note that the converse of this theorem does not hold: There are examples of a pivotal finite tensor category $\mathcal{C}$ and an exact left $\mathcal{C}$-module category $\mathcal{M}$ such that $\mathcal{C}_{\mathcal{M}}^*$ admits a pivotal structure but $\mathcal{M}$ does not; see Remark~\ref{rem:non-pivotal-module-but}.

This theorem has been proved by Schaumann \cite[Corollary 5.9]{MR3435098} in terms of inner-product structures on module categories. For reader's convenience, we give a direct proof based on the above argument.

\begin{proof}[Proof of Theorem~\ref{thm:dual-tensor-cat-pivotal}]
  Let $\mathfrak{p}$ and $\widetilde{\mathfrak{p}}$ be the pivotal structure of $\mathcal{C}$ and $\mathcal{M}$, respectively, and regard $\Ser_{\mathcal{M}}$ as a (non-twisted) left $\mathcal{C}$-module functor by~\eqref{eq:rel-Serre-non-twisted}. Then we have a natural isomorphism
  \begin{equation*}
    \mathfrak{q}_F := \Big( F
    \xrightarrow{\quad \widetilde{\mathfrak{p}} \circ F \quad}
    \Ser_{\mathcal{M}} \circ F
    \xrightarrow{\quad \eqref{eq:rel-Serre-double-right-adj-iso} \quad}
    F^{\rradj} \circ \Ser_{\mathcal{M}}
    \xrightarrow{\quad F^{\rradj} \circ \widetilde{\mathfrak{p}}{}^{-1} \quad}
    F^{\rradj} \Big)
  \end{equation*}
  for $F \in \mathcal{C}_{\mathcal{M}}^*$. We remark that a {\em left} dual object of $F \in \mathcal{C}_{\mathcal{M}}^*$ is a {\em right} adjoint of $F$ (since the tensor product $\mathcal{C}_{\mathcal{M}}^*$ is given by $F \otimes G = G \circ F$). By the commutative diagram given in Theorem~\ref{thm:rel-Serre-double-right-adj}, the isomorphism $\mathfrak{q}_F: F \to F^{\rradj}$ is in fact a pivotal structure of $\mathcal{C}_{\mathcal{M}}^*$. The proof is done.
\end{proof}

\subsection{A construction of symmetric Frobenius algebras}
\label{subsec:symmetric-Frobenius-algebras}

Let $\mathcal{C}$ be a pivotal finite tensor category with pivotal structure $\mathfrak{p}$. An algebra in $\mathcal{C}$ is a synonym for a monoid in $\mathcal{C}$ \cite[VII.3]{MR1712872}.
A {\em Frobenius algebra} in $\mathcal{C}$ is a pair $(A, \lambda)$ consisting of an algebra $A$ in $\mathcal{C}$ and a morphism $\lambda: A \to \unitobj$ in $\mathcal{C}$ (called the {\em Frobenius form}) such that the morphism
\begin{equation}
  \label{eq:Frobenius-alg-psi}
  \psi := \Big( A \xrightarrow{\quad \id_A \otimes \coev_A \quad}
  A \otimes A \otimes A^*
  \xrightarrow{\quad \lambda \mu \otimes \id_A \quad}
  A^* \Big)
\end{equation}
is invertible, where $\mu: A \otimes A \to A$ is the multiplication of $A$. For a Frobenius algebra $A = (A, \lambda)$ in $\mathcal{C}$, the {\em Nakayama automorphism} of $A$ is defined by
\begin{equation*}
  \nu_A := \Big( A
  \xrightarrow{\quad \mathfrak{p}_{A} \quad} A^{**}
  \xrightarrow{\quad \psi^* \quad} A^{*}
  \xrightarrow{\quad \psi^{-1} \quad} A
  \Big).
\end{equation*}
The Frobenius algebra $A$ is said to be {\em symmetric} if $\nu_A = \id_A$. See \cite{MR2500035} for more on Frobenius algebras in a rigid monoidal category and their Nakayama automorphisms.
We note that the definition of an ordinary (symmetric) Frobenius algebra \cite[Chapter 6]{MR1653294} is recovered as the case where $\mathcal{C} = {}_{\bfk}\Mod$.

Now let $\mathcal{M}$ be an exact left $\mathcal{C}$-module category with relative Serre functor $(\Ser, \phi)$.
For every object $M \in \mathcal{M}$, the object $\iEnd(M) := \iHom(M, M)$ is an algebra in $\mathcal{C}$ with respect to the composition for the internal Hom functor. Before we state the main result of this subsection, we remark:

\begin{theorem}
  \label{thm:Frobenius-alg}
  Let $M \in \mathcal{M}$ be an object. If $p: M \to \Ser(M)$ is an isomorphism in $\mathcal{M}$, then $A := \iEnd(M)$ is a Frobenius algebra with Frobenius form
  \begin{equation*}
    \lambda := \Big( A \xrightarrow{\quad \iHom(M, p) \quad}
    \iHom(M, \Ser(M)) \xrightarrow{\quad \itrace_M \quad} \unitobj \Big).    
  \end{equation*}
  The Nakayama automorphism of $(A, \lambda)$ is given by
  \begin{equation*}
    \nu_A
    = \iHom(p, p^{-1}) \circ \phi_{M, \Ser(M)}^{-1} \circ \phi_{M,M}^{*} \circ \mathfrak{p}_A.
  \end{equation*}
\end{theorem}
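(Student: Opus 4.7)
The plan is to identify $\psi$ explicitly via Lemma~\ref{lem:rel-Ser-pairing}(a) and then to compute $(\psi^{-1})^{*} \circ \psi$ using the naturality of $\phi$ in its second variable. By the construction~\eqref{eq:Frobenius-alg-psi}, the morphism $\psi \colon A \to A^{*}$ is uniquely characterised by $\eval_{A} \circ (\psi \otimes \id_{A}) = \lambda \circ \mu$, where $\mu = \icomp_{M, M, M}$. Functoriality of the internal composition in its last slot gives $\iHom(M, p) \circ \mu = \icomp_{M, M, \Ser(M)} \circ (\iHom(M, p) \otimes \id_{A})$, and Lemma~\ref{lem:rel-Ser-pairing}(a) with $N = M$ rearranges to $\itrace_{M} \circ \icomp_{M, M, \Ser(M)} = \eval_{A} \circ (\phi_{M, M}^{-1} \otimes \id_{A})$. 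Combining these equalities yields $\psi = \phi_{M, M}^{-1} \circ \iHom(M, p)$, which is an isomorphism as a composition of two isomorphisms; hence $(A, \lambda)$ is a Frobenius algebra.

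For the Nakayama automorphism, I would dualise $\psi^{-1} = \iHom(M, p^{-1}) \circ \phi_{M, M}$ to obtain $(\psi^{-1})^{*} = \phi_{M, M}^{*} \circ \iHom(M, p^{-1})^{*}$. The crucial input is the naturality of $\phi_{M, -}$ applied to $p^{-1} \colon \Ser(M) \to M$, which reads
\begin{equation*}
  \phi_{M, \Ser(M)} \circ \iHom(M, p^{-1})^{*} = \iHom(p^{-1}, \Ser(M)) \circ \phi_{M, M}.
\end{equation*}
Substituting $\iHom(M, p^{-1})^{*} = \phi_{M, \Ser(M)}^{-1} \circ \iHom(p^{-1}, \Ser(M)) \circ \phi_{M, M}$ into $(\psi^{-1})^{*} \circ \psi$, the inner pair $\phi_{M, M} \circ \phi_{M, M}^{-1}$ cancels, and $\iHom$-functoriality collapses $\iHom(p^{-1}, \Ser(M)) \circ \iHom(M, p) = \iHom(p^{-1}, p)$. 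One obtains $(\psi^{-1})^{*} \circ \psi = \phi_{M, M}^{*} \circ \phi_{M, \Ser(M)}^{-1} \circ \iHom(p^{-1}, p)$, and composing with $\mathfrak{p}_{A}^{-1}$ gives the asserted formula for $\nu_{A}$.

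No substantive obstacle is anticipated: the argument is essentially a diagrammatic manipulation. The only delicate point is keeping track of the variances (which slot of $\iHom$ is contravariant and that $(-)^{*}$ reverses arrows) when extracting the correct naturality square for $\phi$.
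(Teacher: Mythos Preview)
Your proof is correct and follows essentially the same approach as the paper: both identify $\psi = \phi_{M,M}^{-1} \circ \iHom(M,p)$ via Lemma~\ref{lem:rel-Ser-pairing}(a) and the naturality of $\icomp$, and then compute $(\psi^{-1})^{*}\circ\psi$ by invoking the naturality of $\phi_{M,-}$ at $p^{-1}$ to trade $\iHom(M,p^{-1})^{*}$ for $\phi_{M,\Ser(M)}^{-1}\circ\iHom(p^{-1},\Ser(M))\circ\phi_{M,M}$. The steps and their order are the same as in the paper's argument.
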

\begin{proof}
  The multiplication of $A$ is given by $\mu := \icomp_{M,M,M}$. By Lemma~\ref{lem:rel-Ser-pairing} and the naturality of the composition of the internal Hom functor, we have
  \begin{align*}
    \lambda \circ \mu
    & = \itrace_M \circ \icomp_{M, M, \Ser(M)} \circ (\iHom(M, p) \otimes \id_{\iHom(M, M)}) \\
    & = \eval_{\iHom(M, M)}
      \circ ((\phi_{M,M}^{} \circ \iHom(M, p)) \otimes \id_{\iHom(M, M)}).
  \end{align*}
  Since $\phi_{M,M}$ and $p$ are isomorphisms, and since $\eval_{X}$ is non-degenerate for all $X \in \mathcal{C}$, we conclude that $\lambda \circ \mu$ is non-degenerate, that is, $(A, \lambda)$ is a Frobenius algebra. To complete the proof, we compute the Nakayama automorphism of $(A, \lambda)$. We define $\psi$ by \eqref{eq:Frobenius-alg-psi}. By the above computation, we have
  \begin{equation*}
    \psi = \phi_{M, M}^{} \circ \iHom(M, p).
  \end{equation*}
  Thus the Nakayama automorphism is computed as follows:
  \begin{align*}
    \nu_A
    & = \psi^{-1} \circ \psi^{*} \circ \mathfrak{p}_A \\
    & = \iHom(M, p^{-1}) \circ \phi_{M,M}^{-1}
      \circ \iHom(M, p)^* \circ \phi_{M,M}^{*} \circ \mathfrak{p}_A \\
    & = \iHom(M, p^{-1}) \circ \iHom(p, \Ser(M))
      \circ \phi_{M, \Ser(M)}^{-1} \circ \phi_{M,M}^{*} \circ \mathfrak{p}_A \\
    & = \iHom(p, p^{-1}) \circ \phi_{M, \Ser(M)}^{-1} \circ \phi_{M,M}^{*} \circ \mathfrak{p}_A.
  \end{align*}
  Here, the third equality follows from the naturality of $\phi$. The proof is done.
\end{proof}

Now we state the main result of this subsection:

\begin{theorem}
  \label{thm:sym-Frobenius-alg}
  Let $M$ be an object of $\mathcal{M}$. If $\mathcal{M}$ admits a pivotal structure $\widetilde{\mathfrak{p}}$, then the algebra $\iEnd(M)$ is a symmetric Frobenius algebra with Frobenius form
  \begin{equation}
    \label{eq:sym-Frobenius-form}
    t_M := \Big( \iEnd(M)
    \xrightarrow{\quad \iHom(M, \widetilde{\mathfrak{p}}_M) \quad}
    \iHom(M, \Ser(M)) \xrightarrow{\quad \itrace_M \quad} \unitobj \Big).
  \end{equation}
\end{theorem}

Before we give a proof of this theorem, we give a corollary and a remark.
The fact that $\iEnd(M)$ is a symmetric Frobenius algebra has been known in the semisimple case; see \cite[Theorem 6.6]{MR3019263}. Corollary \ref{cor:sym-Frobenius-alg} below can be thought of as a non-semisimple analogue of \cite[Theorem 6.10]{MR3019263}. Given an algebra $A$ in $\mathcal{C}$, we denote by $\mathcal{C}_A$ the category of right $A$-modules in $\mathcal{C}$. The category $\mathcal{C}_A$ has an obvious structure of a finite left $\mathcal{C}$-module category.

\begin{corollary}
  \label{cor:sym-Frobenius-alg}
  For every pivotal left $\mathcal{C}$-module category $\mathcal{M}$, there is a symmetric Frobenius algebra $A$ in $\mathcal{C}$ such that $\mathcal{M} \approx \mathcal{C}_A$ as left $\mathcal{C}$-module categories.
\end{corollary}
\begin{proof}
  Let $G$ be a projective generator of $\mathcal{M}$, and set $A = \iEnd(G)$. Then $\mathcal{M} \approx \mathcal{C}_A$ as left $\mathcal{C}$-module categories \cite{MR3934626,MR3242743}. Theorem \ref{thm:sym-Frobenius-alg} shows that $A$ is a symmetric Frobenius algebra. The proof is done.
\end{proof}

\begin{remark}
  By \cite[Theorem 5.20]{MR3435098}, $\mathcal{C}_A$ is a pivotal left $\mathcal{C}$-module category if $A$ is a {\em special} symmetric Frobenius algebra in the sense of \cite{MR2500035}.
  It would be interesting to know a necessary and sufficient condition for $\mathcal{C}_A$ being pivotal.
  Unlike the semisimple case considered in \cite{MR3019263}, the symmetric Frobenius algebra $\iEnd(M)$ of Theorem~\ref{thm:sym-Frobenius-alg} is not special in general; see Remark \ref{rem:non-symmetric-iEnd}. We also note that $\mathcal{C}_A$ for a symmetric Frobenius algebra $A$ in $\mathcal{C}$ is not necessarily an exact left $\mathcal{C}$-module category (consider the case where $\mathcal{C} = {}_{\bfk}\Mod$ and take $A$ to be any non-semisimple symmetric Frobenius algebra over $\bfk$).
\end{remark}

Now we go into the proof of Theorem~\ref{thm:sym-Frobenius-alg}. By Theorem~\ref{thm:Frobenius-alg}, $\iEnd(M)$ is a Frobenius algebra with Frobenius form $t_M$ given by~\eqref{eq:sym-Frobenius-form}. We shall show that the Nakayama automorphism of $\iEnd(M)$ is the identity morphism. For this purpose, we need Lemmas \ref{lem:mod-str-standard-rel-Serre-2} and~\ref{lem:mod-str-standard-rel-Serre-3} below.

For a while, we consider the general setting that $\mathcal{C}$ and $\mathcal{M}$ may not have pivotal structures. Let $(\Ser, \phi)$ be a relative Serre functor of $\mathcal{M}$ and regard $\Ser$ as an equivalence $\Ser: {}_{\overline{\DD}}\mathcal{M} \to \mathcal{M}$ of left $\mathcal{C}$-module categories. Since the internal Hom functor of ${}_{\overline{\DD}}\mathcal{M}$ is given by $\iHom(-, -)^{**}$, the equivalence $\Ser$ induces an isomorphism
\begin{equation*}
  \underline{\Ser}|_{M,N}: \iHom(M, N)^{**} \to \iHom(\Ser(M), \Ser(N))
  \quad (M, N \in \mathcal{M})
\end{equation*}
of twisted $\mathcal{C}$-bimodule functors (see Remark~\ref{rem:iHom-mod-fun-induced-map}).

\begin{lemma}
  \label{lem:mod-str-standard-rel-Serre-2}
  For all objects $M, N \in \mathcal{M}$, we have
  \begin{equation*}
    \underline{\Ser}|_{M,N} = \phi_{N,\Ser(M)}^{-1} \circ \phi_{M,N}^*.
  \end{equation*}
\end{lemma}
\begin{proof}
  Let $\xi_{X,M}: X^{**} \catactl \Ser(M) \to \Ser(X \catactl M)$ be the twisted $\mathcal{C}$-module structure of $\Ser$. To save space, we denote by $[ , ]$ the internal Hom functor. We consider the following diagram:
  \begin{equation*}
    \footnotesize
    \begin{tikzcd}[column sep = 24pt]
      [M, N]^{**} \otimes \unitobj
      \arrow[d, equal]
      \arrow[rr, "{\id \otimes \icoev_{\unitobj, \Ser(M)}}"]
      & \arrow[d, phantom, "{(\spadesuit)}"]
      & {} [M, N]^{**} \otimes [\Ser(M), \Ser(M)]
      \arrow[d, equal, d] \\
      {} [M, N]^{**}
      \arrow[dr, phantom, "{(\clubsuit)_{\strut}}"]
      \arrow[d, "{\underline{\Ser}|_{M, N}}"']
      \arrow[r, "{\icoev}"]
      & {} [\Ser(M), [M, N]^{**} \catactl \Ser(M)]
      \arrow[d, "{[\id, \xi_{[M, N], M}]}"]
      \arrow[r, "{\mathfrak{a}^{-1}}"]
      & {} [M, N]^{**} \otimes [\Ser(M), \Ser(M)]
      \arrow[d, "{\id \otimes \phi_{M, \Ser(M)}}"] \\
      {} [\Ser(M), \Ser(N)]
      \arrow[dr, phantom, "{(\heartsuit)}"]
      \arrow[d, "{\phi_{N, \Ser(M)}}"']
      & {} [\Ser(M), \Ser([M, N] \catactl M)]
      \arrow[l, "{[\id, \Ser(\ieval)]}"']
      \arrow[d, "{\phi_{[M,N] \catactl N, \Ser(M)}}"]
      \arrow[r, phantom, "{(\diamondsuit)}"]
      & {} [M, N]^{**} \otimes [M, \Ser(M)]^*
      \arrow[d, equal] \\
      {} [N, \Ser(M)]^*
      & {} [[M, N] \catactl M, \Ser(M)]^*
      \arrow[l, "{[\ieval, \id]^*}"]
      \arrow[r, "{(\mathfrak{b}^{-1})^*}"]
      & ([M, \Ser(M)] \otimes [M, N]^{*})^* \\
      {} [N, \Ser(M)]^* \arrow[u, equal]
      & ([N, \Ser(M)] \otimes [M, N] \otimes [M, N]^*)^*
      \arrow[l, "{(\id \otimes \coev)^*{}^{\strut}}"]
      \arrow[u, phantom, "{(\spadesuit)}"]
      & ([M, \Ser(M)] \otimes [M, N]^{*})^*
      \arrow[l, "{(\icomp \otimes \id)^*{}^{\strut}}"]
      \arrow[u, equal]
    \end{tikzcd}
  \end{equation*}
  One can directly check that two cells labeled $(\spadesuit)$ are commutative (see Appendix of \cite{MR4077241}). The cell $(\clubsuit)$ is commutative by the definition of $\underline{\Ser}$ (see Remark~\ref{rem:iHom-mod-fun-induced-map}). The cell $(\heartsuit)$ is commutative by the naturality of $\phi$. Finally, since $\phi$ is a morphism of $\mathcal{C}$-bimodule functors, the cell $(\diamondsuit)$ is commutative. Hence the above diagram commutes. Now the diagram yields the equation
  \begin{equation}
    \label{eq:lem:mod-str-standard-rel-Serre-2-proof-1}
    \begin{aligned}
      \phi_{N,\Ser(M)} \circ \underline{\Ser}|_{M, N}
      & = (\id_{[N, \Ser(M)]} \otimes \coev_{[M, N]})^*
      \circ (\icomp_{M,N,\Ser(M)} \otimes \id_{[M, N]^*})^* \\
      & \qquad \circ (\id_{[M, N]^{**}} \otimes (\phi_{M, \Ser(M)} \circ \icoev_{\unitobj, \Ser(M)})).
    \end{aligned}
  \end{equation}
  By Lemma~\ref{lem:rel-Ser-pairing}, we have
  \begin{gather*}
    {}^*(\phi_{M, \Ser(M)} \circ \icoev_{\unitobj, \Ser(M)})
    = \eval_{[M, \Ser(M)]} \circ
    ((\phi_{M, \Ser(M)} \circ \icoev_{\unitobj, \Ser(M)}) \otimes \id_{[M, \Ser(M)]}) \\
    = \itrace_M \circ \icomp_{M, \Ser(M), \Ser(M)}
    \circ (\icoev_{\unitobj, \Ser(M)} \otimes \id_{[M, \Ser(M)]}) = \itrace_M.
  \end{gather*}
  Thus, by \eqref{eq:lem:mod-str-standard-rel-Serre-2-proof-1} and Lemma~\ref{lem:rel-Ser-pairing}, we compute
  \begin{align*}
    & {}^*(\phi_{N,\Ser(M)} \circ \underline{\Ser}|_{M, N}) \\
    & = (\itrace_M \otimes \id_{[M,N]^*})
      \circ (\icomp_{M,N,\Ser(M)} \otimes \id_{[M, N]^*})
      \circ (\id_{[N, \Ser(M)]} \otimes \coev_{[M, N]}) \\
    & = ((\eval_{[M, N]} \circ (\phi_{M,N} \otimes \id_{\iHom(M, N)})) \otimes \id_{[M,N]^*})
      \circ (\id_{[N, \Ser(M)]} \otimes \coev_{[M, N]}) \\
    & = \phi_{M,N}.
  \end{align*}
  One easily deduce the desired formula from this result. The proof is done.
\end{proof}

Now we suppose that $\mathcal{C}$ has a pivotal structure $\mathfrak{p}$ and $\mathcal{M}$ has a pivotal structure $\widetilde{\mathfrak{p}}$ respecting $\mathfrak{p}$. Then we have:

\begin{lemma}
  \label{lem:mod-str-standard-rel-Serre-3}
  For all objects $M, N \in \mathcal{M}$, we have
  \begin{equation}
    \label{eq:mod-str-standard-rel-Serre-3-proof-1}
    \underline{\Ser}|_{M, N}
    = \iHom(\widetilde{\mathfrak{p}}{}_M^{-1}, \widetilde{\mathfrak{p}}_N^{})
    \circ \mathfrak{p}_{\iHom(M, N)}^{-1}.
  \end{equation}
\end{lemma}
\begin{proof}
  Let $M$ and $N$ be objects of $\mathcal{M}$. To save space, we denote by $[ , ]$ the internal Hom functor. We consider the following diagram:
  \begin{equation*}
    \begin{tikzcd}[column sep = 64pt]
      [\Ser(M), N] \catactl \Ser(M)
      \arrow[rr, "{\ieval_{\Ser(M), N}}"]
      & & N \arrow[d, equal] \\
      {} [M, N] \catactl \Ser(M)
      \arrow[u, "{[\widetilde{\mathfrak{p}}^{-1}_M, N] \catactl \id_{\Ser(M)}}"]
      \arrow[r, "{\id_{[M, N]} \catactl \widetilde{\mathfrak{p}}_M^{-1}}"]
      &{} [M, N] \catactl M
      \arrow[d, "{\widetilde{\mathfrak{p}}_{[M, N] \catactl M}}"]
      \arrow[r, "{\ieval_{M, N}}"]
      & N \arrow[d, "{\widetilde{\mathfrak{p}}_N}"] \\
      {} [M, N]^{**} \catactl \Ser(M)
      \arrow[u, "{\mathfrak{p}_{[M, N]}^{-1} \catactl \id_{\Ser(M)}}"]
      \arrow[r, "{\xi_{[M, N], M}}"]
      & \Ser([M, N] \catactl M)
      \arrow[r, "{\Ser(\ieval_{M,N})}"]
      & \Ser(N)
    \end{tikzcd}
  \end{equation*}
  By the definition of a pivotal structure of $\mathcal{M}$, the lower left square is commutative. The lower right square is commutative by the naturality of $\widetilde{\mathfrak{p}}$. The top square is commutative by the dinaturality of $\ieval_{M, N}$ in the variable $M$. Thus we have
  \begin{equation*}
    \begin{aligned}
      & \Ser(\ieval_{M, N}) \circ \xi_{[M, N], M} \\
      & = \widetilde{\mathfrak{p}}_N \circ \ieval_{\Ser(M), N}
      \circ ([\widetilde{\mathfrak{p}}_M^{-1}, N] \catactl \id_{\Ser(M)})
      \circ (\mathfrak{p}_{[M, N]}^{-1} \catactl \id_{\Ser(M)}) \\
      & = \ieval_{\Ser(M), \Ser(N)} \circ (([\widetilde{\mathfrak{p}}_M^{-1}, \widetilde{\mathfrak{p}}_N] \circ \mathfrak{p}_{[M, N]}^{-1}) \catactl \id_{\Ser(M)})
    \end{aligned}
  \end{equation*}
in $\Hom_{\mathcal{\mathcal{M}}}([M, N]^{**} \catactl \Ser(M), \Ser(N))$, where the first equality follows from the commutativity of the above diagram and the second one follows from the naturality of $\ieval_{\Ser(M), -}$.
  There is a canonical isomorphism
  \begin{equation*}
    \begin{aligned}
      \Hom_{\mathcal{M}}([M, N]^{**} \catactl \Ser(M), \Ser(N))
      & \to \Hom_{\mathcal{C}}([M, N]^{**}, [\Ser(M), \Ser(N)]), \\
      f & \mapsto [\id_{\Ser(N)}, f] \circ \icoev_{[M, N], \Ser(M)}
    \end{aligned}
  \end{equation*}
  by the definition of the internal Hom functor. This isomorphism sends the left and the right hand side of the equation
  \begin{equation*}
    \Ser(\ieval_{M, N}) \circ \xi_{[M, N], M}
    = \ieval_{\Ser(M), \Ser(N)} \circ (([\widetilde{\mathfrak{p}}_M^{-1}, \widetilde{\mathfrak{p}}_N] \circ \mathfrak{p}_{[M, N]}^{-1}) \catactl \id_{\Ser(M)})
  \end{equation*}
  to the left and the right hand side of \eqref{eq:mod-str-standard-rel-Serre-3-proof-1}, respectively (see Remark \ref{rem:iHom-mod-fun-induced-map} for the left hand side). The proof is done.
\end{proof}

\begin{proof}[Proof of Theorem~\ref{thm:sym-Frobenius-alg}]
  By Theorem~\ref{thm:Frobenius-alg} and the above two lemmas, the Nakayama automorphism of $(\iEnd(M), t_M)$ is the identity morphism. The proof is done.
\end{proof}

\section{Relative Serre functor for comodule algebras}
\label{sec:rel-Serre-comod-alg}

Let $H$ be a finite-dimensional Hopf algebra.
Then an exact module category over $\mathcal{C} = {}_H \Mod$ is equivalent to ${}_L\Mod$ for some exact left $H$-comodule algebra $L$ (see Definition \ref{def:exact-comod-alg}).
In this section, we describe category-theoretical notions appearing in the theory of relative Serre functors in a module-theoretic terms.
We use these results to give formulas of the relative Serre functor for ${}_L\Mod$ and related natural isomorphisms (Theorems \ref{thm:main-theorem} and \ref{thm:main-theorem-2}).
We also discuss when ${}_L\Mod$ possesses a pivotal structure and give several remarks about this.

\subsection{Reminder on Frobenius algebras}

Throughout this section, we work over a field $\bfk$.
Unless otherwise noted, the unadorned symbol $\otimes$ means the tensor product over the field $\bfk$.
Given a vector space $V$, we denote its dual space by $V^*$.

Before we go into the main topic, we recall basics on Frobenius algebras \cite[Chapter 6]{MR1653294}. Let $L$ be a finite-dimensional algebra. Then $L^*$ is an $L$-bimodule by
\begin{equation*}
  a \rightharpoonup f \leftharpoonup b = \langle f, b \, ? \, a \rangle
  \quad (f \in L^*, a, b \in L).
\end{equation*}
The Nakayama functor $\Nak_L: {}_L\Mod \to {}_L\Mod$ is defined by $\Nak_L(M) = L^* \otimes_L M$. This functor is an equivalence if and only if $L$ is quasi-Frobenius, that is, the class of projective objects in ${}_L\Mod$ coincides with the class of injective objects of ${}_L\Mod$.

Suppose that $L$ is a Frobenius algebra with Frobenius form $\lambda_L: L \to \bfk$.
Then, by definition, the linear map $\theta_L: L \to L^*$ defined by $\theta_L(a) = \lambda_L \leftharpoonup a$ is bijective. The Nakayama automorphism of $L$ (with respect to $\lambda_L$) is the algebra automorphism $\nu_L$ of $L$ characterized by
\begin{equation}
  \label{eq:def-Nakayama}
  \langle \lambda_L, b a \rangle
  = \langle \lambda_L, \nu_L(a) b \rangle
  \quad (a, b \in L).
\end{equation}
Given a left $L$-module $M$ and an algebra automorphism $f$ of $L$, we denote by ${}_{(f)}M$ the left $L$-module obtained from $M$ by twisting the action of $L$ by $f$.
The map $\theta_L$ is in fact an isomorphism ${}_{(\nu_L)}L \to L^*$ of $L$-bimodules. Hence the functor $\Nak_L$ is isomorphic to ${}_{(\nu_L)}(-)$. In particular, we have
\begin{equation}
  \label{eq:Frobenius-Nakayama-preserves-dim}
  \dim_{\bfk} \Nak_L(V) = \dim_{\bfk} V
\end{equation}
for all $V \in {}_L \Mod$. According to \cite[\S16D]{MR1653294}, we have:

\begin{lemma}
  \label{lem:Frobenius-Nakayama-dim}
  A quasi-Frobenius algebra $L$ admits a structure of a Frobenius algebra if and only if the equation \eqref{eq:Frobenius-Nakayama-preserves-dim} holds for all simple objects $V \in {}_L \Mod$.
\end{lemma}

Let $L$ be a finite-dimensional algebra.
A {\em Frobenius system} for $L$ is a triple $(\lambda_L,\allowbreak \{ a^i \}_{i = 1}^r,\allowbreak \{ b_i \}_{i = 1}^r)$ consisting of a linear map $\lambda_L: L \to \bfk$ and two bases $\{ a^i \}$ and $\{ b_i \}$ of $L$ such that $\langle \lambda_L, a^i b_j \rangle = \Kdelta_{i j}$ for all $i, j = 1, \cdots, r$, where $r$ is the dimension of $L$ over $\bfk$ and $\Kdelta$ means the Kronecker delta. We note that $L$ is a Frobenius algebra if and only if a Frobenius system for $L$ exists.
The following equations involving the Frobenius system are well-known; see, {\it e.g.}, \cite[Section 2]{MR1865524}.

\begin{lemma}
  \label{lem:Frob-alg-dual-basis}
  Let $L$ be a Frobenius algebra with Frobenius system as above. Then, for all elements $c \in L$, we have
  \begin{gather}
    \label{eq:Frob-str-L-1}
    \quad \langle \lambda_L, a^i \rangle b_i = 
    1_L = \langle \lambda_L, b_i \rangle a^i, \\
    \label{eq:Frob-str-L-2}
    a^i c \otimes b_i = a^i \otimes c b_i,
    \quad \nu_L(c) a^i \otimes b_i = a^i \otimes b_i c,
  \end{gather}
  where the Einstein convention is used to suppress sums over $i$.
\end{lemma}

By the above lemma, it is straightforward to verify:

\begin{lemma}
  \label{lem:Frob-alg-Hom}
  Notations are as in the above lemma. For $M \in {}_L \Mod$, the map
  \begin{equation}
    \Hom_L(M, L) \to M^*,
    \quad f \mapsto \lambda_L \circ f
  \end{equation}
  is a natural isomorphism of vector spaces with the inverse
  \begin{equation}
    M^* \to \Hom_L(M, L), \quad m^* \mapsto \langle m^*, a^i \mathord{?} \rangle b_i.
  \end{equation}
\end{lemma}

\subsection{Hopf algebras and comodule algebras}

Till the end of this section, we fix a finite-dimensional Hopf algebra $H$ and denote by $\mathcal{C}$ the finite tensor category ${}_H \Mod$ of finite-dimensional left $H$-modules.
The comultiplication, the counit and the antipode of $H$ will be denoted by $\Delta$, $\varepsilon$ and $S$, respectively. The Sweedler notation, such as
\begin{equation*}
  \Delta(h) = h_{(1)} \otimes h_{(2)},
  \quad
  \Delta(h_{(1)}) \otimes h_{(2)} = h_{(1)} \otimes h_{(2)} \otimes h_{(3)} = h_{(1)} \otimes \Delta(h_{(2)}),
\end{equation*}
will be used to express the comultiplication. The following notation for the action of the dual algebra $H^*$ on $H$ will also be used:
\begin{equation*}
  f \rightharpoonup h := h_{(1)} \langle f, h_{(2)} \rangle,
  \quad h \leftharpoonup f := \langle f, h_{(1)} \rangle h_{(2)}
  \quad (f \in H^*, h \in H).
\end{equation*}

A left $H$-comodule algebra is an algebra $L$ equipped with a left $H$-comodule structure such that the coaction $L \to H \otimes L$ is an algebra map. Let $L$ be a finite-dimensional left $H$-comodule algebra with coaction $\delta_L: L \to H \otimes L$. We also use a Sweedler-type notation, such as
\begin{gather*}
  \delta_L(a) = a_{(-1)} \otimes a_{(0)} \in H \otimes L, \\
  \Delta(a_{(-1)}) \otimes a_{(0)}
  = a_{(-2)} \otimes a_{(-1)} \otimes a_{(0)}
  = a_{(-1)} \otimes \delta_L(a_{(0)}),
\end{gather*}
to express the coaction of $H$. The category ${}_L \Mod$ is a finite left $\mathcal{C}$-module category by the action defined as follows: We set $X \catactl M = X \otimes M$ for $X \in \mathcal{C}$ and $M \in {}_L \Mod$ as a vector space. The action of $L$ on $X \catactl M$ is given by
\begin{equation*}
  a \cdot (x \otimes m) = a_{(-1)} x \otimes a_{(0)} m
  \quad (a \in L, x \in X, m \in M).
\end{equation*}

A left $H$-comodule algebra is said to be {\em $H$-simple from the right} (respectively, {\em $H$-simple}) if it has no non-trivial right (respectively, two-sided) ideal stable under the coaction of $H$ \cite[Definition 1.2]{MR2331768}.
According to \cite[Theorem 3.3]{MR2331768}, every indecomposable exact left $\mathcal{C}$-module category is equivalent, as a left $\mathcal{C}$-module category, to ${}_L \Mod$ for some finite-dimensional left $H$-comodule algebra $L$ such that $L$ is $H$-simple from the right and the space of $H$-coinvariants of $L$ is trivial.
Now we introduce the following terminology:

\begin{definition}
  \label{def:exact-comod-alg}
  An {\em exact left $H$-comodule algebra} is a finite-dimensional left $H$-comodule algebra $L$ such that ${}_L \Mod$ is an exact left $\mathcal{C}$-module category.
\end{definition}

According to Skryabin \cite[Theorem 4.2]{MR2286047}, a finite-dimensional $H$-simple left $H$-comodule algebra is Frobenius.
This result implies that a large class of exact left $H$-comodule algebras, including left coideal subalgebras of $H$ \cite[Propositions 1.6 and 1.20]{MR2331768}, is Frobenius. Slightly generalizing this consequence, we prove:

\begin{lemma}
  \label{lem:exact-comod-alg-Frobenius}
  An exact left $H$-comodule algebra is a Frobenius algebra.
\end{lemma}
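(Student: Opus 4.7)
The plan is to reduce to the indecomposable case, which is precisely what the combination of \cite[Theorem 3.3]{MR2331768} and Skryabin's theorem \cite[Theorem 4.2]{MR2286047} already handles, and then to reassemble the factors. Since a finite direct product of Frobenius algebras is Frobenius---with Frobenius form the sum of the factor-wise forms---it is enough to write $L$ as a product $L = \prod_{i=1}^{n} L_i$ of left $H$-comodule algebras such that each $L_i$ is exact and ${}_{L_i}\Mod$ is indecomposable.

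The first step is to match decompositions of ${}_L\Mod$ as an ${}_H\Mod$-module category with $H$-coinvariant central idempotents of $L$. A direct-sum decomposition of ${}_L\Mod$ as a $\bfk$-linear abelian category corresponds to a central idempotent $e \in L$ via $L = Le \times L(1-e)$. For such a decomposition to respect the ${}_H\Mod$-action, $X \catactl M$ must be an $Le$-module whenever $M$ is, and a direct computation with the $L$-action $a \cdot (x \otimes m) = a_{(-1)} x \otimes a_{(0)} m$ shows this is equivalent to $\delta(e) = 1 \otimes e$. In that case $Le$ is itself an $H$-subcomodule algebra of $L$, and ${}_{Le}\Mod$ is an exact ${}_H\Mod$-module category as a direct summand of the exact one. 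Iterating with primitive $H$-coinvariant central idempotents produces the desired decomposition.

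Second, for each factor $L_i$, \cite[Theorem 3.3]{MR2331768} supplies an $H$-comodule algebra $L'_i$ that is $H$-simple from the right with trivial $H$-coinvariants, together with an equivalence ${}_{L'_i}\Mod \simeq {}_{L_i}\Mod$ of left ${}_H\Mod$-module categories; Skryabin's theorem then gives that $L'_i$ is Frobenius. The equivalence is realized by an ${}_H\Mod$-compatible progenerator $P \in {}_{L'_i}\Mod$ whose internal endomorphism algebra $\iEnd(P)^{\op}$ is isomorphic to $L_i$ as an $H$-comodule algebra, so in particular $L_i$ and $L'_i$ are Morita equivalent as $\bfk$-algebras. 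Morita invariance of the Frobenius property for finite-dimensional algebras then transfers Frobenius-ness to $L_i$, and taking the product over $i$ finishes the argument.

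I expect the main obstacle to be this final transfer step: Morita invariance of the Frobenius property is not completely trivial, and I would want to verify it in this setting by constructing an explicit Frobenius form on $L_i$ from the one on $L'_i$, using the specific progenerator $P$ of \cite{MR2331768} together with the $H$-equivariance built into its construction. A secondary concern is ensuring that the first step really splits ${}_L\Mod$ as an ${}_H\Mod$-module category rather than merely as a $\bfk$-linear abelian category, but the coinvariance computation sketched above addresses exactly this point.
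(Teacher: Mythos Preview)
Your outline has a genuine gap at the final transfer step, and it is more serious than ``not completely trivial'': the Frobenius property is \emph{not} Morita invariant for finite-dimensional algebras. What is Morita invariant is being quasi-Frobenius (self-injective); a quasi-Frobenius algebra is Frobenius precisely when the Nakayama permutation on simple modules preserves their $\bfk$-dimensions, and that last condition is destroyed by arbitrary Morita equivalences (which can rescale the dimensions of simples independently). So from ``$L_i'$ is Frobenius'' and ``$L_i$ is Morita equivalent to $L_i'$'' you only get that $L_i$ is quasi-Frobenius, not that $L_i \cong L_i^*$ as left $L_i$-modules.

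The fix requires using the $H$-equivariance in an essential way: on an indecomposable ${}_H\Mod$-module category, $\dim_{\bfk}$ is the unique dimension function up to a global scalar (it satisfies $d(X \catactl M) = \dim_{\bfk}(X)\, d(M)$), so an ${}_H\Mod$-equivariant equivalence ${}_{L_i'}\Mod \simeq {}_{L_i}\Mod$ rescales all $\bfk$-dimensions of simples by the same factor. Since the Nakayama permutation is Morita invariant and preserves $\dim_{\bfk}$ on the $L_i'$ side, it then also preserves $\dim_{\bfk}$ on the $L_i$ side, and $L_i$ is Frobenius. This Frobenius--Perron style argument is exactly the substantive ingredient in the paper's own proof, though the paper organises it differently: rather than passing through \cite{MR2331768} and Skryabin, it works directly with the relative Serre functor, shows $\dim_{\bfk}\Ser(M) = \dim_{\bfk} M$ via the same dimension-function uniqueness (together with the Radford $S^4$-formula to get finite order), and then deduces $L^* \cong \Ser(L) \cong L$ using Theorem~\ref{thm:rel-Serre-double-right-adj}.
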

\begin{proof}
  Let $L$ be an exact left $H$-comodule algebra.
  If $\mathcal{M} := {}_L \Mod$ is decomposed into a direct sum of $\mathcal{C}$-module full subcategories as $\mathcal{M} =\mathcal{M}_1 \oplus \dotsb \oplus \mathcal{M}_r$, then there are two-sided $H$-subcomodule ideals $L_i \subset L$ ($i = 1, \cdots, r$) such that $L = L_1 \oplus \dotsb \oplus L_r$ and ${}_{L_i}\Mod = \mathcal{M}_i$ for all $i = 1, \cdots, r$ \cite[Proposition 1.18]{MR2331768}.
  It is obvious that each $L_i$ is exact.
  Since a finite product of Frobenius algebras is again Frobenius, the theorem reduces to the case where $\mathcal{M}$ is indecomposable. Thus we may, and do, assume that $\mathcal{M}$ is indecomposable.

  We consider two functions $\mathsf{d}(M) = \dim_{\bfk} M$ and $\mathsf{d}'(M) = \dim_{\bfk} \Nak(M)$ defined on the class of objects of $\mathcal{M}$, where $\Nak := \Nak_L$ is the Nakayama functor. By Lemma~\ref{lem:Frobenius-Nakayama-dim}, it suffices to show that the equation $\mathsf{d} = \mathsf{d}'$ holds.
  There is a natural isomorphism $\Nak(X \catactl M) \cong {}^{**}X \catactl \Nak(M)$ for $X \in \mathcal{C}$ and $M \in {}_L \Mod$ \cite{MR4042867}. Thus we have
  \begin{equation*}
    \mathsf{d}(X \catactl M) = \dim_{\bfk}(X) \mathsf{d}(M)
    \quad \text{and} \quad
    \mathsf{d}'(X \catactl M) = \dim_{\bfk}(X) \mathsf{d}'(M)
  \end{equation*}
  for all $X \in \mathcal{C}$ and $M \in \mathcal{M}$. It follows from the theory of Frobenius-Perron dimensions for module categories \cite[Subsection 2.6]{MR3600085} that there exists $\lambda \in \mathbb{R}_{>0}$ such that $\mathsf{d}'(M) = \lambda \mathsf{d}(M)$ for all $M \in \mathcal{M}$.

  Let $\overline{\DD} = {}^{**}(-)$ denote the double right dual functor on $\mathcal{C}$.
  By the categorical Radford $S^4$-formula \cite{MR2097289}, there is a positive integer $n$ such that $\overline{\DD}{}^n$ is isomorphic to $\id_{\mathcal{C}}$ as a tensor functor. For such an integer $n$, the $n$-th power of $\Nak$ becomes an invertible object of the dual tensor category $\mathcal{C}_{\mathcal{M}}^*$. Thus $\Nak^{n m}$ is isomorphic to $\id_{\mathcal{C}}$ in $\mathcal{C}_{\mathcal{M}}^*$ for some positive integer $m$. This implies $\lambda^{n m} = 1$. Since $\lambda \in \mathbb{R}_{> 0}$, we have $\lambda = 1$, that is, $\mathsf{d} = \mathsf{d}'$. The proof is done.
\end{proof}

\subsection{Module functors as equivariant bimodules}
\label{subsec:equivariant-bimodule}

Categories of module functors are used in the general theory of relative Serre functors. Here we recall a description of the category of right exact module functors from \cite{MR2331768} and discuss how right adjoints of module functors are given.

Let $A$ and $B$ be finite-dimensional left $H$-comodule algebras. Since they are algebras in the monoidal category ${}^H \Mod$ of finite-dimensional left $H$-comodules, the category ${}^H_A \Mod_B^{}$ of $A$-$B$-bimodules in ${}^H \Mod$ is defined. By definition, an object of this category is a finite-dimensional $A$-$B$-bimodule $M$ equipped with a left $H$-comodule structure $\delta_M: M \to H \otimes M$ such that the equation
\begin{equation}
  \label{eq:equivariant-bimod-def}
  \delta_M(a m b) = a_{(-1)} m_{(-1)} b_{(-1)} \otimes a_{(0)} m_{(0)} b_{(0)}
\end{equation}
holds for all $a \in A$, $b \in B$ and $m \in M$.

A finite-dimensional $A$-$B$-bimodule $M$ gives rise to a $\bfk$-linear right exact functor $M \otimes_B (-)$ from ${}_B \Mod$ to ${}_A \Mod$. If $M \in {}_A^H \Mod_B^{}$, then the functor $M \otimes_B (-)$ is an (oplax) left $\mathcal{C}$-module functor with the structure morphism given by
\begin{equation}
  \label{eq:equivariant-EW-mod-str}
  \begin{aligned}
    M \otimes_B (X \catactl N)
    & \to X \catactl (M \otimes_B N), \\
    m \otimes_B (x \otimes n)
    & \mapsto m_{(-1)} x \otimes (m_{(0)} \otimes_B n)
  \end{aligned}
\end{equation}
for $x \in X \in {}_H \Mod$, $n \in N \in {}_B \Mod$ and $m \in M$. The functor
\begin{equation}
  \label{eq:equivariant-EW}
  {}^H_A \Mod_B^{} \to \REX_{\mathcal{C}}({}_B \Mod, {}_A \Mod),
  \quad M \mapsto M \otimes_B (-)
\end{equation}
is in fact an equivalence of categories \cite[Proposition 1.23]{MR2331768}.

\subsubsection{Right adjoint of a module functor by Hom}

Let $M$ be an object of ${}_A^{H} \Mod_B^{}$. Since the functor $\Hom_A(M, -)$ is right adjoint to $M \otimes_B (-)$, it is a (lax) left $\mathcal{C}$-module functor by Lemma~\ref{lem:C-mod-func-adj}. For later use, we give an explicit formula of the structure map of $\Hom_{A}(M, -)$ and its inverse:

\begin{lemma}
  \label{lem:Hom-mod-str}
  Let $M$ be as above. Then the functor $\Hom_{A}(M, -): {}_A \Mod \to {}_B \Mod$ has a structure of a left $\mathcal{C}$-module functor given by
  \begin{equation}
    \label{eq:Hom-mod-str}
    \begin{aligned}
      X \catactl \Hom_A(M, N) & \to \Hom_A(M, X \catactl N) \\
      x \otimes f & \mapsto [ m \mapsto m_{(-1)} x \otimes f(m_{(0)}) ]
    \end{aligned}
  \end{equation}
  for $x \in X \in \mathcal{C}$, $N \in {}_A \Mod$ and $f \in \Hom_A(M, N)$.
  The map \eqref{eq:Hom-mod-str} is an isomorphism with the inverse given by
  \begin{equation}
    \label{eq:Hom-mod-str-inverse}
    \begin{aligned}
      g & \mapsto x_i \otimes [ m \mapsto \langle x^i, S(m_{(-1)}) g(m_{(0)})_{X} \rangle g(m_{(0)})_{N} ]
    \end{aligned}
  \end{equation}
  for $g \in \Hom_{A}(M, X \catactl N)$, where $g(m) \in X \otimes N$ for $m \in M$ is written symbolically as $g(m) = g(m)_{X} \otimes g(m)_N$ despite that it may not be a single tensor in general, $\{ x_i \}$ is a basis of the vector space $X$, $\{ x^i \}$ is the dual basis of $X^*$ and the Einstein convention is used to suppress the sum over $i$.
\end{lemma}

An expression of the form $[ m \mapsto (...) ]$ in this lemma indicates the map sending an element $m$ to the element expressed by (...). For example, \eqref{eq:Hom-mod-str-inverse} is interpreted as follows: Given $g \in \Hom_{\bfk}(M, X \catactl N)$ and $x^* \in X^*$, we consider the map
\begin{equation*}
  \psi(g, x^*): M \to N,
  \quad m \mapsto \langle x^*, S(m_{(-1)}) g(m_{(0)})_{X} \rangle g(m_{(0)})_{N}.
\end{equation*}
One can check that $\psi(g, x^*) \in \Hom_A(M, N)$ whenever $g \in \Hom_{A}(M, X \catactl N)$. The expression \eqref{eq:Hom-mod-str-inverse} actually indicates the following map:
\begin{equation*}
  \Hom_A(M, X \catactl N) \to X \catactl \Hom_A(M, N),
  \quad g \mapsto x_i \otimes \psi(g, x^i).
\end{equation*}
Expressions of the form $[ m \mapsto (...) ]$ are used throughout this section. We leave the reader to check the well-definedness of formulas involving such expressions.

\begin{proof}[Proof of Lemma \ref{lem:Hom-mod-str}]
  The unit and the counit of the tensor-Hom adjunction for the bimodule $M$ are given respectively by
  \begin{gather*}
    \eta_V: V \to \Hom_A(M, M \otimes_B V),
    \quad v \mapsto [ m \mapsto m \otimes_B v ], \\
    \varepsilon_W: M \otimes_B \Hom_A(M, W) \to W,
    \quad m \otimes_B f \mapsto f(m)
  \end{gather*}
  for $m \in M$, $v \in V \in {}_B \Mod$, $W \in {}_A \Mod$ and $f \in \Hom_A(M, W)$. The left $\mathcal{C}$-module structure of $\Hom_A(M, -)$ is thus given by the composition
  \begin{equation*}
    \newcommand{\xarr}[1]{\xrightarrow{\makebox[7em]{$\scriptstyle #1$}}}
    \begin{aligned}
      X \catactl \Hom_A(M, N)
      & \xarr{\quad \eta_{X \catactl \Hom_A(M, N)}^{} \quad}
        \Hom_A(M, M \otimes_B (X \catactl \Hom_A(M, N))) \\
      & \xarr{\quad \eqref{eq:equivariant-EW-mod-str} \quad}
        \Hom_A(M, X \catactl (M \otimes_B \Hom_A(M, N))) \\
      & \xarr{\quad \Hom_A(M, X \catactl \varepsilon_N) \quad} \Hom_A(M, X \catactl N),
    \end{aligned}
  \end{equation*}
  which results \eqref{eq:Hom-mod-str}. By the proof of \cite[Lemma 2.10]{MR3934626}, the inverse of \eqref{eq:Hom-mod-str} is given by the composition
  \begin{equation*}
    \newcommand{\xarr}[1]{\xrightarrow{\makebox[10em]{$\scriptstyle #1$}}}
    \begin{aligned}
      \Hom_A(M, X \catactl N)
      \xarr{\coev_X \catactl \Hom_A(M, X \catactl N)}
      & \, X \catactl X^* \catactl \Hom_A(M, X \catactl N) \\
      \xarr{\eqref{eq:Hom-mod-str}}
      & \, X \catactl \Hom_A(M, X^* \catactl X \catactl N) \\
      \xarr{X \catactl \Hom_A(M, \eval_X \catactl N)}
      & \, X \catactl \Hom_A(M, N),
    \end{aligned}
  \end{equation*}
  which results \eqref{eq:Hom-mod-str-inverse}. The proof is done.
\end{proof}

\begin{remark}
  \label{rem:equivariant-EW-adjoint}
  By the above lemma and \cite[Proposition 1.23]{MR2331768}, we have a category equivalence
  \begin{equation}
    \label{eq:equivariant-EW-lex}
    ({}_A^{H} \Mod_B^{})^{\op} \to \LEX_{\mathcal{C}}({}_A \Mod, {}_B \Mod),
    \quad M \mapsto \Hom_A(M, -)
  \end{equation}
  such that the following diagram is commutative up to the isomorphism given by the uniqueness of a right adjoint:
  \begin{equation}
    \label{eq:equivariant-EW-adj}
    \begin{tikzcd}[column sep = 32pt]
      & ({}_A^{H} \Mod_B^{})^{\op}
      \arrow[ld, "{\eqref{eq:equivariant-EW}}"']
      \arrow[rd, "{\eqref{eq:equivariant-EW-lex}}"] \\
      \REX_{\mathcal{C}}({}_B \Mod, {}_A \Mod)^{\op}
      \arrow[rr, "{(-)^{\radj}}"]
      & & \LEX_{\mathcal{C}}({}_A \Mod, {}_B \Mod)
    \end{tikzcd}
  \end{equation}
  Now we consider the case where $A = H$. Then ${}_A^{H} \Mod {}_{B}^{} = {}_H^{H} \Mod_{B}^{}$ is a right $\mathcal{C}$-module category by the action defined as follows: For $M \in {}_H^{H} \Mod_{B}^{}$ and $X \in \mathcal{C}$, we set $M \catactr X = M \otimes X$ as a vector space and equip it with the left $H$-comodule structure and the $H$-$B$-bimodule structure defined by
  \begin{equation*}
    m \otimes x \mapsto m_{(-1)} \otimes m_{(0)} \otimes x
    \quad \text{and} \quad
    h \cdot (m \otimes x) \cdot b
    = h_{(1)} m b \otimes h_{(2)} x,
  \end{equation*}
  respectively, for $m \in M$, $x \in X$, $h \in H$ and $b \in B$.
  The categories $\REX_{\mathcal{C}}({}_B \Mod, \mathcal{C})$ and $\LEX_{\mathcal{C}}(\mathcal{C}, {}_B \Mod)$ are a right and a left $\mathcal{C}$-module category by \eqref{eq:REX-M-N-right-action} and  \eqref{eq:LEX-M-N-left-action}, respectively. The functor~\eqref{eq:equivariant-EW} is an equivalence ${}^H_H \Mod_B^{} \to \REX_{\mathcal{C}}({}_B \Mod, \mathcal{C})$ of right $\mathcal{C}$-module categories together with the natural isomorphism given by
  \begin{equation}
    \label{eq:equivariant-EW-bicomod-alg-1}
    (M \catactr X) \otimes_B N
    \to (M \otimes_B N) \otimes X,
    \quad (m \otimes x) \otimes_B n
    \mapsto (m \otimes_B n) \otimes x
  \end{equation}
  for $m \in M \in {}_H^{H} \Mod_B^{}$, $x \in X \in \mathcal{C}$ and $n \in N \in {}_B \Mod^{}$. The functor~\eqref{eq:equivariant-EW-lex} is an equivalence of left $\mathcal{C}$-module categories by the natural isomorphism
  \begin{equation}
    \label{eq:equivariant-EW-bicomod-alg-2}
    \Hom_H(M \catactr {}^* \! X, N)
    \cong \Hom_H(M, N \otimes X)
    \quad (M \in {}^{H}_H\Mod_B^{}, N, X \in \mathcal{C})
  \end{equation}
  given by the duality of $\mathcal{C}$. One can check that \eqref{eq:equivariant-EW-adj} with $A = H$ is in fact a diagram of left $\mathcal{C}$-module functors commuting up to the isomorphism given by the uniqueness of a right adjoint.
\end{remark}

\subsubsection{Right adjoint of a module functor by tensor}
\label{subsubsec:ra-by-tensor}

We recall the following basic result on finitely generated projective modules:
Given a left module $M$ over a ring $A$, we define $M^{\dagger} = \Hom_A(M, A)$.
There is a natural transformation
\begin{equation}
  \label{eq:dagger-eval}
  e_{M,N}: M^{\dagger} \otimes_A N \to \Hom_A(M, N),
  \quad f \otimes_A n \mapsto [ m \mapsto f(m) n ]
\end{equation}
for a left $A$-module $N$.
According to the dual basis lemma \cite[\S2B]{MR1653294}, $P$ is finitely generated projective if and only if there are finitely many elements $m_1, \dotsc, m_k \in M$ and $m^1, \dotsc m^k \in M^{\dagger}$ such that $m^i(x) \cdot m_i = x$ for all $x \in M$ (such a family $\{ m_i, m^i \}$ is called {\em a pair of dual bases} for $M$ over $R$).
Given a pair $\{ m_i, m^i \}$ of dual bases for $M$ over $R$, the inverse of \eqref{eq:dagger-eval} is given by
\begin{equation}
  \label{eq:dagger-eval-inv}
  e_{M,N}^{-1}: \Hom_A(M, N) \to M^{\dagger} \otimes_A N,
  \quad f \mapsto m^i \otimes_A f(m_i).
\end{equation}

Now let, as before, $A$ and $B$ be finite-dimensional left $H$-comodule algebras.
As we have observed in Lemma \ref{lem:Hom-mod-str}, the functor $\Hom_A(M, -)$ for $M \in {}_A^H\Mod_B$ is a left $\mathcal{C}$-module functor.
We suppose that $M \in {}_A^H \Mod_B^{}$ is projective as a left $A$-module.
Then, by the equivalence \eqref{eq:equivariant-EW}, the $B$-$A$-bimodule $M^{\dagger}$ can be made into an object of ${}_B^H \Mod_A^{}$ in such a way that the map \eqref{eq:dagger-eval} is an isomorphism of left $\mathcal{C}$-module functors.
The detail is as follows:

\begin{lemma}
  \label{lem:equivariant-bimod-dagger}
  Let $M$ be as above. Then the $B$-$A$-bimodule $M^{\dagger}$ is an object of ${}_B^{H} \Mod_A^{}$ by the left $H$-coaction $f \mapsto f_{(-1)} \otimes f_{(0)}$ determined by the equation
  \begin{equation}
    \label{eq:dagger-comod-str}
    f_{(-1)} \otimes f_{(0)}(m)
    = S(m_{(-1)}) f(m_{(0)})_{(-1)} \otimes f(m_{(0)})_{(0)}
  \end{equation}
  in $H \otimes A$ for $f \in M^{\dagger}$ and $m \in M$.
  Thus the functor $M^{\dagger} \otimes_{A}(-)$ has a structure of an (oplax) left $\mathcal{C}$-module functor given by
  \begin{equation}
    \label{eq:dagger-C-module-str}
    \begin{aligned}
      \xi_{X,N}: M^{\dagger} \otimes_A (X \catactl N) & \to X \catactl (M^{\dagger} \otimes_A N), \\
      f \otimes_A (x \otimes n) & \mapsto f_{(-1)} x \otimes (f_{(0)} \otimes_A n)
    \end{aligned}
  \end{equation}
  for $n \in N \in {}_B\Mod$, $x \in X \in {}_H \Mod$ and $f \in M^{\dagger}$.
  The map~\eqref{eq:dagger-eval} gives rise to an isomorphism of left $\mathcal{C}$-module functors.
\end{lemma}

\begin{proof}
  We first show that there is a unique linear map $M^{\dagger} \to H \otimes M^{\dagger}$ determined by \eqref{eq:dagger-comod-str}. By using $\{ m_i, m^i \}$ as above, we define $\delta: M^{\dagger} \to H \otimes M^{\dagger}$ by
  \begin{equation*}
    \delta(f) = S(m_{i(-1)}) f(m_{i(0)})_{(-1)} \otimes m^i \cdot f(m_{i(0)})_{(0)}
    \quad (f \in M^{\dagger}).
  \end{equation*}
  For $m \in M$, we define the linear map $\phi_m$ by
  \begin{equation*}
    \phi_m: H \otimes M^{\dagger} \to H \otimes A,
    \quad \phi_m(h \otimes f) = h \otimes f(m)
    \quad (h \in H, f \in M^{\dagger}).
  \end{equation*}
  We note that two elements $T$ and $T'$ of $H \otimes M^{\dagger}$ are equal if and only if the equation $\phi_m(T) = \phi_m(T')$ holds for all $m \in M$.
  For simplicity, we set $a^i = m^i(m)$.
  Then, for $f \in M^{\dagger}$ and $m \in M$, we compute
  \begin{align*}
    \phi_m \delta(f)
    & = S(m_{i(-1)}) f(m_{i(0)})_{(-1)} \otimes m^i(m) f(m_{i(0)})_{(0)} \\
    & = S(m_{i(-1)}) S(a^i_{(-2)}) a^i_{(-1)} f(m_{i(0)})_{(-1)} \otimes a^i_{(0)} f(m_{i(0)})_{(0)} \\
    & = S(a^i_{(-1)} m_{i(-1)}) (a^i_{(0)} f(m_{i(0)}))_{(-1)} \otimes (a^i_{(0)} f(m_{i(0)}))_{(0)} \\
    & = S(a^i_{(-1)} m_{i(-1)}) f(a^i_{(0)} m_{i(0)})_{(-1)} \otimes f(a^i_{(0)} m_{i(0)})_{(0)} \\
    & = S((a^i m_{i})_{(-1)}) f((a^i m_{i})_{(0)})_{(-1)} \otimes f((a^i m_{i})_{(0)})_{(0)} \\
    & = S(m_{(-1)}) f(m_{(0)})_{(-1)} \otimes f(m_{(0)})_{(0)},
  \end{align*}
  where the third and the fifth equality follow from \eqref{eq:equivariant-bimod-def}, the fourth from the $A$-linearity of $f$, and the sixth from $a^i m_i = m$. Thus the map $f \mapsto \delta(f)$ is a unique linear map satisfying \eqref{eq:dagger-comod-str}.

  For $f \in M^{\dagger}$, $a \in A$, $b \in B$ and $m \in M$, we compute
  \begin{align*}
    \phi_m \delta(f a)
    & = S(m_{(-1)}) (f (m_{(0)}) a)_{(-1)} \otimes (f(m_{(0)}) a)_{(0)} \\
    & = S(m_{(-1)}) f (m_{(0)})_{(-1)} a_{(-1)} \otimes f(m_{(0)})_{(0)} a_{(0)} \\
    & = \phi_m(f_{(-1)} a_{(-1)} \otimes f_{(0)} a_{(0)}), \\
    \phi_m \delta(b f)
    & = S(m_{(-1)}) f(m_{(0)} b)_{(-1)} \otimes f(m b)_{(0)} \\
    & = b_{(-2)} S(b_{(-1)}) S(m_{(-1)}) f(m_{(0)} b_{(0)})_{(-1)} \otimes f(m b_{(0)})_{(0)} \\
    & = b_{(-1)} S(m_{(0)} b_{(0)}) f(m_{(0)} b_{(0)})_{(-1)} \otimes f(m_{(0)} b_{(0)})_{(0)} \\
    & = \phi_m(b_{(-1)} f_{(-1)} \otimes b_{(0)} f_{(0)}).
  \end{align*}
  Thus $\delta$ is a morphism of $B$-$A$-bimodules. Although we do not yet know whether $\delta$ makes $M^{\dagger}$ a left $H$-comodule, we define the linear map $\xi_{X,N}$ for $X \in \mathcal{C}$ and $N \in {}_A \Mod$ by \eqref{eq:dagger-C-module-str}.
  One can check that $\xi_{X,N}$ is well-defined by the $A$-linearity of $\delta$.
  The $B$-linearity of $\delta$ ensures that $\xi_{X,N}$ is indeed a morphism of left $B$-modules. Now we consider the diagram
  \begin{equation*}
    \begin{tikzcd}[column sep = 48pt]
      M^{\dagger} \otimes_A (X \catactl N)
      \arrow[r, "{e_{M,X \catactl N}}"]
      \arrow[d, "{\xi_{X,N}}"']
      & \Hom_A(M, X \catactl N)
      \arrow[d, "{\zeta_{X,N}^{-1}}"] \\
      X \catactl (M^{\dagger} \otimes_A N)
      \arrow[r, "{\id_X \catactl e_{M,N}}"]
      & X \catactl \Hom_A(M, N)
    \end{tikzcd}
  \end{equation*}
  in ${}_L \Mod$, where $\zeta_{X, N}: X \catactl \Hom_A(M, N) \to \Hom_A(M, X \catactl N)$ is the left $\mathcal{C}$-module structure of $\Hom_A(M, -)$ given by \eqref{eq:Hom-mod-str}. This diagram is commutative. Indeed, for $f \in M^{\dagger}$, $x \in X$, $n \in N$ and $m \in M$, we have
  \begin{align*}
    e_{M,X \catactl N}(f \otimes_A (x \otimes n))(m)
    & = f(m)_{(-1)} x \otimes_{\bfk} f(m)_{(0)} n \\
    & = m_{(-2)} S(m_{(-1)}) f(m_{(0)})_{(-1)} x \otimes_{\bfk} f(m_{(0)})_{(0)} n \\
    & = m_{(-1)} f_{(-1)} x \otimes f_{(0)}(m_{(0)}) n \\
    & = ((\zeta_{X,N} (\id_X \catactl e_{M,N}) \xi_{X,N}) (f \otimes_A (x \otimes n)))(m).
  \end{align*}
  The commutativity of the diagram shows that $M^{\dagger} \otimes_A (-)$ is a left $\mathcal{C}$-module functor by \eqref{eq:dagger-C-module-str} and the map~\eqref{eq:dagger-eval} is an isomorphism of left $\mathcal{C}$-module functors. Hence $M^{\dagger}$ is an object of ${}^H_A \Mod_B^{}$ with the left $H$-comodule structure $\delta$. The proof is done.
\end{proof}

\begin{remark}
  We suppose that $A$ is an exact left $H$-comodule algebra. Then every $\mathcal{C}$-module functor from ${}_A \Mod$ is exact \cite{MR2119143}. Thus $\Hom_A(M, -)$ is exact for every $M \in {}^H_A \Mod_B^{}$ and therefore every object of ${}_A^{H} \Mod_B^{}$ is projective as a left $A$-module. The assignment $M \mapsto M^{\dagger}$ extends to a contravariant functor such that the following diagram is commutative up to isomorphisms:
  \begin{equation*}
    \begin{tikzcd}[column sep = 48pt]
      {}_A^{H} \Mod_B^{}
      \arrow[d, "{(-)^{\dagger}}"']
      \arrow[r, "{\eqref{eq:equivariant-EW}}"]
      & \REX_{\mathcal{C}}({}_B \Mod, {}_A \Mod)
      \arrow[d, "{(-)^{\radj}}"] \\
      {}_B^{H} \Mod_A^{}
      \arrow[r, "{\eqref{eq:equivariant-EW}}"]
      & \REX_{\mathcal{C}}({}_A \Mod, {}_B \Mod).
    \end{tikzcd}
  \end{equation*}
  Namely, the contravariant functor $(-)^{\dagger} : {}^H_A \Mod_B^{} \to {}^H_B \Mod_A^{}$ is a module-theoretic counterpart of the operation taking right adjoints.
\end{remark}

\subsection{Module-theoretic description of the internal Hom functor}

Let $L$ be a finite-dimensional left $H$-comodule algebra.
It is shown in \cite{MR2331768} that an internal Hom functor of the $\mathcal{C}$-module category ${}_L \Mod$ is given by the Yan-Zhu stabilizer.
Here we give another description of an internal Hom functor of ${}_L \Mod$ and its $\mathcal{C}$-bimodule structure.

If $M \in {}_L \Mod$, then $H \catactl M \in {}_L\Mod$ becomes an object of ${}_L^H \Mod_{H}^{}$ together with the right $H$-action and the right $H$-coaction given by
\begin{equation*}
  (h \otimes m) \cdot h' = h h' \otimes m
  \quad \text{and} \quad h \otimes m \mapsto h_{(1)} \otimes h_{(2)} \otimes m,
\end{equation*}
respectively, for $h, h' \in H$ and $m \in M$. It is easy to see that the map
\begin{equation*}
  X \catactl M \to (H \catactl M) \otimes_H X,
  \quad x \otimes m \mapsto (1 \otimes m) \otimes_H x
  \quad (m \in M, x \in X)
\end{equation*}
is a natural isomorphism of left $L$-modules for $M \in {}_L \Mod$ and  $X \in {}_H \Mod$. Thus, by the tensor-Hom adjunction, we have natural isomorphisms
\begin{equation*}
  \Hom_{L}(X \catactl M, N)
  \cong \Hom_{L}((H \catactl M) \otimes_H X, N)
  \cong \Hom_{H}(X, \Hom_{L}(H \catactl M, N))
\end{equation*}
for $X \in \mathcal{C}$ and $M, N \in {}_L \Mod$. This means that the functor
\begin{equation*}
  \iHom(M, N) = \Hom_{L}(H \catactl M, N)
  \quad (M, N \in {}_L\Mod)
\end{equation*}
is an internal Hom functor of ${}_L \Mod$. By the construction, the unit and the counit of the adjunction $(-) \catactl M \dashv \iHom(M, -)$ are given by
\begin{gather}
  \label{eq:Hopf-iHom-unit}
  \icoev_{X,M}: X \to \iHom(M, X \catactl M),
  \quad x \mapsto [ h \otimes m \mapsto h x \otimes m ], \\
  \label{eq:Hopf-iHom-counit}
  \ieval_{M,N}: \iHom(M, N) \catactl M \to N,
  \quad f \otimes m \mapsto f(1 \otimes m)
\end{gather}
for $x \in X \in \mathcal{C}$, $M, N \in {}_L \Mod$, $m \in M$, $h \in H$ and $f \in \iHom(M, N)$. Now we compute some morphisms related to the internal Hom functor:

\begin{lemma}
  For $M_1, M_2, M_3 \in {}_L \Mod$, the composition
  \begin{equation*}
    \icomp_{M_1, M_2, M_3}: \iHom(M_2, M_3) \otimes \iHom(M_1, M_2) \to \iHom(M_1, M_3)
  \end{equation*}
  for the internal Hom functor is given by
  \begin{equation*}
    f \otimes g \mapsto [ h \otimes m \mapsto f(h_{(1)} \otimes g(h_{(2)} \otimes m)) ]
  \end{equation*}
  for $f \in \iHom(M_2, M_3)$ and $g \in \iHom(M_1, M_2)$.
\end{lemma}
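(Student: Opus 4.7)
The plan is to unwind the universal property that defines $\icomp_{M_1, M_2, M_3}$ and then perform the resulting symbolic manipulation using the explicit formulas \eqref{eq:Hopf-iHom-unit} and \eqref{eq:Hopf-iHom-counit} for $\icoev$ and $\ieval$. By construction, $\icomp(f \otimes g)$ corresponds, under the adjunction isomorphism \eqref{eq:iHom-def-1} with $X = \iHom(M_2, M_3) \otimes \iHom(M_1, M_2)$, $M = M_1$, $N = M_3$, to the morphism
\begin{equation*}
  \phi := \ieval_{M_2, M_3} \circ (\id \catactl \ieval_{M_1, M_2})
  : X \catactl M_1 \to M_3.
\end{equation*}
Combining the formula $\Phi(p) = G(p) \circ \eta_X$ from \eqref{eq:adj-iso-by-unit-counit} with \eqref{eq:Hopf-iHom-unit}, one obtains the clean identity $\icomp(\xi)(h \otimes m) = \phi(h \xi \otimes m)$ for $\xi \in X$, $h \in H$, $m \in M_1$.

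Before evaluating, I would pin down the left $H$-module structure on $\iHom(M, N) = \Hom_L(H \catactl M, N)$. The quickest route is to observe that $H \catactl M$ carries a right $H$-action via its second tensor factor that commutes with the left $L$-action, so $\Hom_L(H \catactl M, N)$ inherits a natural left $H$-action. A direct check (either by chasing the adjunction \eqref{eq:iHom-def-1} through the unit, or by verifying compatibility with $\ieval_{M,N}$ as in \eqref{eq:Hopf-iHom-counit}) confirms that this action is $(h \cdot g)(h' \otimes m) = g(h' h \otimes m)$, which is the correct internal-Hom action.

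With these ingredients the computation is short. Using the diagonal $H$-action on the tensor product $X$, the explicit form of $\phi$, and the identity above for the $H$-action on $\iHom(M_1, M_2)$ and $\iHom(M_2, M_3)$, I would compute
\begin{align*}
  \icomp(f \otimes g)(h \otimes m)
  &= \phi\bigl((h_{(1)} \cdot f) \otimes (h_{(2)} \cdot g) \otimes m\bigr) \\
  &= (h_{(1)} \cdot f)\bigl(1 \otimes (h_{(2)} \cdot g)(1 \otimes m)\bigr) \\
  &= (h_{(1)} \cdot f)\bigl(1 \otimes g(h_{(2)} \otimes m)\bigr) \\
  &= f\bigl(h_{(1)} \otimes g(h_{(2)} \otimes m)\bigr),
\end{align*}
which is the asserted formula.

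The only real obstacle is the second step, i.e.\ correctly identifying the $H$-module structure on $\iHom$; once that is in hand the argument is purely formal. I expect no further subtleties because the internal $\ieval$ and $\icoev$ have already been made fully explicit in \eqref{eq:Hopf-iHom-unit} and \eqref{eq:Hopf-iHom-counit}.
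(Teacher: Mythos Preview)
Your proposal is correct and is precisely the straightforward unwinding the paper has in mind; the paper omits the proof entirely, saying only that the lemma ``is proved straightforwardly,'' and your argument via the explicit $\icoev$, $\ieval$ of \eqref{eq:Hopf-iHom-unit}--\eqref{eq:Hopf-iHom-counit} together with the right-$H$-action on $H\catactl M$ is exactly the intended route. Your identification of the $H$-module structure on $\iHom(M,N)$ is also correct and is the only nontrivial step, as you note.
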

\begin{proof}
  The map $\icomp_{M_1, M_2, M_3}$ is defined to be the composition
  \begin{equation*}
    \ieval_{M_2, M_3} \circ (\id_{\iHom(M_2, M_3)} \catactl \ieval_{M_1, M_2})
    \circ \icoev_{\iHom(M_2, M_3) \otimes \iHom(M_1, M_2), M_1}.
  \end{equation*}
  By using \eqref{eq:Hopf-iHom-unit} and \eqref{eq:Hopf-iHom-counit}, one proves this lemma straightforwardly.
\end{proof}

\begin{lemma}
  The $\mathcal{C}$-bimodule structure of $\iHom$ is given by
  \begin{align}
    \label{eq:Hopf-iHom-mod-1}
    X \otimes \iHom(M, N)
    & \to \iHom(M, X \catactl N), \\
    \notag x \otimes \xi
    & \mapsto [ h \otimes m \mapsto h_{(1)} x \otimes \xi(h_{(2)} \otimes m) ], \\[3pt]
    \label{eq:Hopf-iHom-mod-2}
    \iHom(M, N) \otimes Y
    & \to \iHom({}^* Y \catactl M, N), \\
    \notag \xi \otimes y
    & \mapsto [ h \otimes {}^*\!y \otimes m
      \mapsto  \xi(h_{(1)} \otimes m) \, \langle {}^* \! y, h_{(2)} y \rangle ]
  \end{align}
  for $x \in X \in \mathcal{C}$, $y \in Y \in \mathcal{C}$, $M, N \in {}_L \Mod$ and $\xi \in \iHom(M, N)$.
\end{lemma}
\begin{proof}
  We recall from Subsection~\ref{subsec:internal-hom} that the left $\mathcal{C}$-module structure of $\iHom$ is given by \eqref{eq:iHom-mod-left-1}--\eqref{eq:iHom-mod-left-2}.
  For $x \in X$ and $\xi \in \iHom(M, N)$, we compute:
  \begin{align*}
    x \otimes \xi
    \xmapsto{\makebox[3em]{\scriptsize \eqref{eq:iHom-mod-left-1}}}
    & \, [ h \otimes m \mapsto h_{(1)} x \otimes h_{(2)} \xi \otimes m ] \\
    \xmapsto{\makebox[3em]{\scriptsize \eqref{eq:iHom-mod-left-2}}}
    & \, [ h \otimes m \mapsto h_{(1)} x \otimes \xi(h_{(2)} \otimes m) ],
  \end{align*}
  where $h \in H$ and $m \in M$. This agrees with \eqref{eq:Hopf-iHom-mod-1}.
  The right $\mathcal{C}$-module structure of $\iHom$ is given by \eqref{eq:iHom-mod-right-1}--\eqref{eq:iHom-mod-right-3}. For $\xi \in \iHom(M, N)$ and $y \in Y$, we compute:
  \begin{align*}
    \xi \otimes y
    \xmapsto{\makebox[3em]{\scriptsize \eqref{eq:iHom-mod-right-1}}}
    & \, [ h \otimes {}^*\!y \otimes m \mapsto h_{(1)} \xi \otimes h_{(2)} y \otimes {}^*\!y \otimes m ] \\
    \xmapsto{\makebox[3em]{\scriptsize \eqref{eq:iHom-mod-right-2}}}
    & \, [ h \otimes {}^*\!y \otimes m \mapsto h_{(1)} \xi \otimes \langle {}^*\!y, h_{(2)} y \rangle m ] \\
    \xmapsto{\makebox[3em]{\scriptsize \eqref{eq:iHom-mod-right-3}}}
    & \, [ h \otimes {}^*\!y \otimes m \mapsto \xi(h_{(1)} \otimes m) \, \langle {}^*\!y, h_{(2)} y \rangle ],
  \end{align*}
  where $h \in H$, ${}^*\!y \in {}^* Y$ and $m \in M$.
  This agrees with \eqref{eq:Hopf-iHom-mod-2}. The proof is done.
\end{proof}

\begin{remark}
  \label{rem:internal-Yoneda}
  By Lemma~\ref{lem:Hom-mod-str}, an object $P \in {}_L^{H} \Mod_H^{}$ gives rise to a left exact left $\mathcal{C}$-module functor $\Hom_L(P, -): {}_L \Mod \to \mathcal{C}$. The left $\mathcal{C}$-module structure of $\iHom$ given in the above lemma is just the case where $P = H \catactl M \in {}_L^{H} \Mod_H^{}$.
\end{remark}

\subsection{Module-theoretic description of the internal Yoneda functor}
\label{subsec:internal-Yoneda}

Let $L$ be an exact left $H$-comodule algebra.
The aim of this subsection is to introduce a module-theoretic counterpart of the internal Yoneda functor
\begin{equation*}
  \Yone: {}_L \Mod \to \REX_{\mathcal{C}}({}_L \Mod, \mathcal{C})^{\op},
  \quad M \mapsto \iHom(M, -).
\end{equation*}
Since $L$ is assumed to be exact and $H \in \mathcal{C}$ is projective, $H \catactl M \in {}_L \Mod$ is projective for all $M \in {}_L \Mod$. By Lemma \ref{lem:equivariant-bimod-dagger}, the following functor is defined:
\begin{equation*}
  \Yone_1: {}_L \Mod \to ({}_H^H \Mod_L^{})^{\op}
  \quad M \mapsto (H \catactl M)^{\dagger} = \iHom(M, L).
\end{equation*}
Furthermore, the natural transformation
\begin{equation}
  \label{eq:Yoneda-module-tensor}
  \Yone_1(M) \otimes_L (-)
  \xrightarrow[\quad \eqref{eq:dagger-eval} \quad]{e_{H \catactl M, -}}
  \Hom_{L}(H \catactl M, -)
  = \Yone(M)
\end{equation}
is an isomorphism of left $\mathcal{C}$-module endofunctors on ${}_L \Mod$.

\begin{lemma}
  \label{lem:Yoneda-module}
  The functor $\Yone_1$ is a $\DD$-twisted left $\mathcal{C}$-module functor with the structure morphism given by
  \begin{equation}
    \label{eq:Yoneda-module-1}
    \begin{aligned}
      s_{M,X}: \Yone_1(M) \catactr X^*
      & \to \Yone_1(X \catactl M), \\
      \xi \otimes x^*
      & \mapsto [ h \otimes x \otimes m \mapsto \xi(h_{(1)} \otimes m) \langle x^*, S(h_{(2)}) x \rangle ]
    \end{aligned}
  \end{equation}
  for $X \in \mathcal{C}$, $M \in {}_L \Mod$, $\xi \in \Yone_1(M)$ and $x^* \in X^*$.
  The composition
  \begin{equation}
    \label{eq:Yoneda-module-2}
    {}_L \Mod
    \xrightarrow{\quad \Yone_1 \quad}
    ({}_H^{H} \Mod_L^{})^{\op}
    \xrightarrow[\approx]{\quad \text{\rm \eqref{eq:equivariant-EW-mod-str} and Remark~\ref{rem:equivariant-EW-adjoint}} \quad}
    \REX_{\mathcal{C}}({}_L \Mod, \mathcal{C})^{\op}
  \end{equation}
  is isomorphic to $\Yone$ via \eqref{eq:Yoneda-module-tensor} as a $\DD$-twisted left $\mathcal{C}$-module functor.
\end{lemma}
\begin{proof}
  It is straightforward to verify that $\Yone_1$ is a $\DD$-twisted left $\mathcal{C}$-module functor by the structure map given by \eqref{eq:Yoneda-module-1}.
  We denote by
  \begin{align*}
    \mathfrak{b}_{M,N,X} & : \Yone(M)(N) \otimes X^* \to \Yone(X \catactl M)(N)
    \quad \text{and} \\
    \mathfrak{b}'_{M,N,X} & : (\Yone_1(M) \otimes_L N) \otimes X^* \to \Yone_1(X \catactl M)(N)
  \end{align*}
  the $\DD$-twisted left $\mathcal{C}$-module structure of $\Yone$ and that of \eqref{eq:Yoneda-module-2}, respectively.
  The former one is given by \eqref{eq:Hopf-iHom-mod-2} with $Y = X^*$.
  By straightforward computation, we see that the latter one is given by
  \begin{align*}
    \mathfrak{b}'_{M,N,X}((\xi \otimes_L n) \otimes x^*)
    & \mathop{=}^{\eqref{eq:equivariant-EW-bicomod-alg-1}}
      \mbox{} s_{M,X}(\xi \otimes x^*) \otimes_L n \\
    & \mathop{=}^{\eqref{eq:Yoneda-module-1}}
      \mbox{} [ h \otimes x \otimes m \mapsto \xi(h_{(1)} \otimes m) \langle x^*, S(h_{(2)}) x \rangle ] \otimes_L n
  \end{align*}
  for $\xi \in \Yone_1(M)$, $n \in N$ and $x^* \in X^*$. We compute:
  \begin{align*}
    & \mathfrak{b}_{M,N,X}(e_{H \catactl M,N} \otimes \id_{X^*})((\xi \otimes_L n) \otimes x^*) \\
    & \quad = \mathfrak{b}_{M,N,X}([h \otimes m \mapsto \xi(h \otimes m) n] \otimes x^*) \\
    & \quad = [ h \otimes m \otimes x \mapsto \xi(h_{(1)} \otimes m) n
      \langle h_{(2)} x^*, x \rangle ]
      \quad (\text{by \eqref{eq:Hopf-iHom-mod-2} with $Y = X^*$}) \\
    & \quad = [ h \otimes m \otimes x \mapsto \xi(h_{(1)} \otimes m) n
      \langle x^*, S(h_{(2)}) x \rangle ] \\
    & \quad = e_{H \catactl X \catactl M, N} \mathfrak{b}'_{M,N,X}((\xi \otimes_L n) \otimes x^*).
  \end{align*}
  This shows that \eqref{eq:Yoneda-module-tensor} is an isomorphism of $\DD$-twisted left $\mathcal{C}$-module functors. The proof is done.
\end{proof}

We fix a Frobenius system $(\lambda_L, \{ a^i \}, \{ b_i \})$ for $L$, which exists by Lemma \ref{lem:exact-comod-alg-Frobenius}.
For later use, we introduce another module-theoretic internal Yoneda functor $\Yone_2$, which depends on the choice of the Frobenius system for $L$.
For $M \in {}_L\Mod$, we set $\Yone_2(M) := (H \otimes M)^*$ as a vector space.
By Lemma~\ref{lem:Frob-alg-Hom}, the map
\begin{equation}
  \label{eq:Y2-M-psi-1}
  \psi_M: \Yone_1(M) \to \Yone_2(M),
  \quad f \mapsto \lambda_L \circ f
  \quad (f \in \Yone_1(M))
\end{equation}
is an isomorphism of vector spaces with the inverse given by
\begin{equation}
  \label{eq:Y2-M-psi-2}
  \psi^{-1}_M(\xi)(h \otimes m) = \langle \xi, a^i_{(-1)} h \otimes a^i_{(0)} m \rangle b_i
\end{equation}
for $h \in H$ and $m \in M$. We make $\Yone_2(M)$ an object of ${}^H_H \Mod_L$ by transporting the structure maps of $\Yone_1(M) \in {}^H_H \Mod_L$ via $\psi_M$.
It is obvious that the assignment $M \mapsto \Yone_2(M)$ gives rise to a functor
\begin{equation*}
  \Yone_2: {}_L \Mod \to ({}^H_H \Mod_L)^{\op},
\end{equation*}
which is isomorphic to $\Yone_1$ via \eqref{eq:Y2-M-psi-1}. We note:

\begin{lemma}
  \label{lem:Yoneda-module-2}
  Let $M \in {}_L\Mod$.
  The left action of $H$, the right action of $L$ and the left coaction $\xi \mapsto \xi_{(-1)} \otimes \xi_{(0)}$ of $H$ on $\Yone_2(M) \in {}^H_H \Mod_L$ are determined by
  \begin{gather}
    \label{eq:Y2-M-action}
    \langle h \cdot \xi \cdot a, k \otimes m \rangle
    = \langle \xi, \nu_L(a)_{(-1)} k h \otimes \nu_L(a)_{(0)} m \rangle, \\
    \label{eq:claim-Y2-M-coact-1}
    \xi_{(-1)} \langle \xi_{(0)}, k \otimes m \rangle
    = S(k_{(1)}) b_{i(-1)} \langle \lambda_L, b_{i(0)} \rangle \langle \xi, a^i_{(-1)} k_{(2)} \otimes a^i_{(0)} m \rangle
  \end{gather}
  for $\xi \in \Yone_2(M)$, $h, k \in H$, $a \in L$ and $m \in M$, where $\nu_L$ is the Nakayama automorphism of $L$ with respect to $\lambda_L$.
\end{lemma}
\begin{proof}
  It suffices to check that the equations
  \begin{gather*}
    \langle h \cdot \psi_M(f) \cdot a, k \otimes m \rangle
    = \langle \psi_M(h \cdot f \cdot a), k \otimes m \rangle, \\
    f_{(-1)} \langle \psi_M(f_{(0)}), k \otimes m \rangle
    = \psi_M(f)_{(-1)} \langle \psi_M(f)_{(0)}, k \otimes m \rangle
  \end{gather*}
  hold for all elements $h, k \in H$, $a \in L$, $m \in M$ and $f \in \Yone_1(M)$.
  The former one is verified directly as follows:
  \begin{gather*}
    \langle h \cdot \psi_M(f) \cdot a, k \otimes m \rangle
    \stackrel{\eqref{eq:Y2-M-action}}{=}
    \langle \lambda_L, f(\nu_{L}(a)_{(-1)} k h \otimes \nu_{L}(a)_{(0)} m) \rangle \\
    = \langle \lambda_L, \nu_L(a) \cdot f(k h \otimes m) \rangle
    \stackrel{\eqref{eq:def-Nakayama}}{=}
    \langle \lambda_L, f(k h \otimes m) a \rangle
    = \langle \psi_M(h \cdot f \cdot a), k \otimes m \rangle,
  \end{gather*}
  where the second equality follows from the $L$-linearity of $f$. To prove the latter one, we set $\xi = \psi_M(f)$ and $w = k \otimes m$ for simplicity. Then we have
  \begin{align*}
    f_{(-1)} \langle \psi_M(f_{(0)}), w \rangle = \,
    & f_{(-1)} \langle \lambda_L, f_{(0)}(w) \rangle \\
    {}^{\eqref{eq:dagger-comod-str}} = \,
    & S(w_{(-1)}) f(w_{(0)})_{(-1)} \langle \lambda_L, f(w_{(0)})_{(0)} \rangle \\
    = \,
    & S(k_{(1)}) f(k_{(2)} \otimes m)_{(-1)} \langle \lambda_L, f(k_{(2)} \otimes m)_{(0)} \rangle \\
    {}^{\eqref{eq:Y2-M-psi-2}} = \,
    & S(k_{(1)}) \langle \xi, a^i_{(-1)} k_{(2)} \otimes a^i_{(0)} m \rangle
      b_{i(-1)} \langle \lambda_L, b_{i(0)} \rangle \\
    {}^{\eqref{eq:claim-Y2-M-coact-1}} =\,
    & \xi_{(-1)} \langle \xi_{(0)}, k \otimes m \rangle. \qedhere
  \end{align*}
\end{proof}

Thus, $\Yone_2 : {}_L\Mod \to ({}^H_H\Mod_L)^{\op}$ is a $\DD$-twisted left $\mathcal{C}$-module functor by
\begin{equation*}
  t_{M,X} := \psi_{X \catactl M} \circ s_{M,X} \circ (\psi_M^{-1} \catactl \id_{X^*})
  \quad (M \in {}_L\Mod, X \in \mathcal{C}),
\end{equation*}
where $s$ is the structure morphism of $\Yone_1$ given by~\eqref{eq:Yoneda-module-1}.
It is obvious from the construction that the composition
\begin{equation*}
  {}_L \Mod \xrightarrow{\quad \Yone_2 \quad} ({}^H_H \Mod_L^{})^{\op}
  \xrightarrow{\quad \eqref{eq:equivariant-EW} \quad} \REX_{\mathcal{C}}({}_L\Mod, \mathcal{C})^{\op}
\end{equation*}
is isomorphic to $\Yone$ as a $\DD$-twisted left $\mathcal{C}$-module functor.
For later use, we provide the following explicit expression of $t_{M,X}$.

\begin{lemma}
  For $M \in {}_L \Mod$ and $X \in \mathcal{C}$, we have
  \begin{equation}
    \label{eq:Y2-M-C-mod-str}
    \langle t_{M,X}(\xi \otimes x^*), h \otimes x \otimes m \rangle
    = \langle \xi, h_{(1)} \otimes m \rangle \langle x^*, S(h_{(2)}) x \rangle
  \end{equation}
  for $f \in \Yone_2(M)$, $x^* \in X^*$, $h \in H$, $x \in X$ and $m \in M$.
\end{lemma}
\begin{proof}
  For $\xi$, $x^*$, $h$, $x$ and $m$ as above, we have
  \begin{align*}
    (s_{M,X}(\psi_M^{-1}(\xi) \otimes x^*))(h \otimes x \otimes m)
    & = \psi_M^{-1}(\xi)(h_{(1)} \otimes m) \langle x^*, S(h_{(2)}) x \rangle \\
    & = \langle \xi, a^i_{(-1)} h_{(1)} \otimes a^i_{(0)} m \rangle
      \langle x^*, S(h_{(2)}) x \rangle b_i
  \end{align*}
  by \eqref{eq:Yoneda-module-1} and \eqref{eq:Y2-M-psi-2}.
  Thus equation~\eqref{eq:Y2-M-C-mod-str} is verified as follows:
  \begin{align*}
    \langle t_{X,M}(\xi \otimes x^*), h \otimes x \otimes m \rangle
    & = \langle \xi, a^i_{(-1)} h_{(1)} \otimes a^i_{(0)} m \rangle
      \langle x^*, S(h_{(2)}) x \rangle \langle \lambda_L, b_i \rangle \\
    {}^{\eqref{eq:Frob-str-L-1}}
    & = \langle \xi, h_{(1)} \otimes m \rangle \langle x^*, S(h_{(2)}) x \rangle. \qedhere
  \end{align*}
\end{proof}

\subsection{Relative Serre functor for comodule algebras}
\label{subsec:comod-alg-rel-Serre}

Let $L$ be an exact left $H$-comodule algebra with coaction $\delta_L$.
Then $L$ is a Frobenius algebra by Lemma~\ref{lem:exact-comod-alg-Frobenius}.
We fix a Frobenius system $(\lambda_L, \{ a^i \}, \{ b_i \})$ for $L$ and define the algebra automorphism $\nu_L'$ of $L$ by
\begin{equation}
  \label{eq:twisted-Nakayama}
  \nu'_L(a) :=
  (\id_L \otimes \alpha_H) \delta_L \nu_L(a)
  = \langle \alpha_H, \nu_L(a)_{(-1)} \rangle \nu_L(a)_{(0)} \quad (a \in L),
\end{equation}
where $\nu_L$ is the Nakayama automorphism \eqref{eq:def-Nakayama} and $\alpha_H: H \to \bfk$ is the right modular function on $H$ (see \eqref{eq:right-modular} below for the precise definition). For $M \in {}_L \Mod$, we define $\Ser(M) \in {}_L \Mod$ to be the vector space $M$ equipped with the action $\bullet$ given by
\begin{equation}
  \label{eq:Ser-action}
  a \bullet m = \nu'_L(a) m
  \quad (a \in L, m \in M).
\end{equation}
By \cite[Theorems 4.4 and 4.26]{MR4042867}, the functor
\begin{equation}
  \label{eq:Ser-FSS}
  \Ser': {}_L\Mod \to {}_L \Mod,
  \quad \Ser'(M) = \Nak_L(D^{*} \catactl M) \quad (M \in {}_L \Mod)
\end{equation}
is a relative Serre functor for ${}_L\Mod$, where $D$ is the so-called distinguished invertible object of $\mathcal{C}$.
Since the Nakayama functor $\Nak_L$ is isomorphic to the functor twisting the action of $L$ by $\nu_L$, and since $D^{*}$ is isomorphic to the left $H$-module given by the algebra map $\alpha_H : H \to \bfk$, the functor $\Ser$ is isomorphic to \eqref{eq:Ser-FSS}. Thus $\Ser$ is also a relative Serre functor of ${}_L \Mod$. In this sense, a relative Serre functor of ${}_L \Mod$ has been completely determined on the level of functors. However, this description does not give any information about the natural isomorphisms
\begin{equation}
  \label{eq:rel-Ser-isomorphisms}
  \iHom(N, \Ser(M)) \cong \iHom(M, N)^*
  \quad \text{and} \quad
  \Ser(X \catactl M) \cong X^{**} \catactl \Ser(M)
\end{equation}
for $M, N \in {}_L \Mod$ and $X \in {}_H \Mod$, which are important when, for example, we discuss whether ${}_L \Mod$ admits a pivotal structure.

We aim to have module-theoretic descriptions of isomorphisms \eqref{eq:rel-Ser-isomorphisms}. For this purpose, we use the integral theory for Hopf algebras \cite{MR1243637,MR1265853}.
Let $\Lambda_H \in H$ be a non-zero right integral in $H$.
The {\em right modular function} on $H$ is defined to be the unique algebra map $\alpha_H: H \to \bfk$ satisfying
\begin{equation}
  \label{eq:right-modular}
  h \cdot \Lambda_H = \langle \alpha_H, h \rangle \Lambda_H \quad (h \in H).
\end{equation}
There is a unique right cointegral $\lambda_H$ on $H$ ($=$ a right integral in $H^*$) such that $\langle \lambda_H, \Lambda_H \rangle = 1$.
We define $g_H^{} \in H$ to be the right modular function $\alpha_{H^*} \in H^{**}$ regarded as an element of $H$ through the canonical isomorphism $H^{**} \cong H$ of vector spaces. Equivalently, $g_H$ is the unique element of $H$ such that
\begin{equation}
  \label{eq:distinguished-grouplike}
  h_{(1)} \langle \lambda_H, h_{(2)} \rangle = \langle \lambda_H, h \rangle g_H^{}
  \quad (h \in H).
\end{equation}
The map $\alpha_H$ is an algebra map, and the element $g_H$ is a grouplike element, that is, $\Delta(g_H) = g_H \otimes g_H$ and $\varepsilon(g_H) = 1$.
By using the Frobenius system, we define
\begin{equation}
  \label{eq:Serre-mod-str-element}
  \mathfrak{T} = \langle \lambda_L, b_{i(0)} \rangle S^3(b_{i(-1)}) g_H^{}
  \otimes \langle \alpha_H, a^i_{(-1)} \rangle a^i_{(0)} \in H \otimes L
\end{equation}
and write it symbolically as $\mathfrak{T} = \mathfrak{T}_H \otimes \mathfrak{T}_L$. Now we state the following main theorem in this section:

\begin{theorem}
  \label{thm:main-theorem}
  For $M \in {}_L \Mod$, we define $\Ser(M) \in {}_L \Mod$ to be the vector space $M$ equipped with the left $L$-action given by~\eqref{eq:Ser-action}. Then the endofunctor $\Ser$ on ${}_L \Mod$ is a relative Serre functor of ${}_L \Mod$ together with the following isomorphisms:
  \begin{enumerate}
  \item [(1)] The twisted $\mathcal{C}$-module structure $\Ser(X \catactl M) \to X^{**} \catactl \Ser(M)$ is given by
    \begin{equation}
      \label{eq:Ser-mod-structure}
      x \otimes m
      \mapsto \Phi_X(\mathfrak{T}_H x)
      \otimes \mathfrak{T}_L m
    \end{equation}
    for $x \in X \in \mathcal{C}$ and $m \in M \in {}_L \Mod$, where $\Phi_X: X \to X^{**}$ is the canonical isomorphism of finite-dimensional vector spaces.
  \item [(2)]
    For $M \in {}_L\Mod$, we define $\itrace_M : \iHom(M, \Ser(M)) \to \bfk$ by
    \begin{equation}
      \label{eq:iHom-trace}
      \itrace_M(\xi) = \langle \lambda_L, t^i(\Lambda_H \otimes \xi(t_i)) \rangle
      \quad (\xi \in \iHom(M, N)),
    \end{equation}
    where $\{ t_i, t^i \}$ is a pair of dual bases of $H \catactl M$ (see \S\ref{subsubsec:ra-by-tensor}), which is projective since $L$ is exact.
    The isomorphism
    \begin{equation*}
      \iHom(N, \Ser(M)) \cong \iHom(M, N)^*
      \quad (M, N \in {}_L \Mod)
    \end{equation*}
    of twisted $\mathcal{C}$-bimodule functors is induced by the pairing
    \begin{equation*}
      \itrace_M \circ \icomp_{M,N,\Ser(M)}: \iHom(N, \Ser(M)) \otimes \iHom(M, N) \to \bfk.
    \end{equation*}
  \end{enumerate}
\end{theorem}
  
The formula \eqref{eq:Ser-mod-structure} may not be convenient for practical use.
In Subsection~\ref{subsec:gl-coint}, we will give a simple expression of the twisted $\mathcal{C}$-module structure under the assumption that the Frobenius form $\lambda_L$ satisfies a certain equation like a cointegral on a Hopf algebra.

To prove Theorem~\ref{thm:main-theorem}, we consider the diagram
\begin{equation}
  \label{eq:main-theorem-proof-diagram-1}
  \begin{tikzcd}[column sep = 42pt, row sep = 32pt]
    {}_L \Mod \arrow[r, "{\Yone}"]
    \ar[d, leftarrow, "{\id}"]
    \ar[rd, Rightarrow, shorten = 2ex,
    "\text{\S\ref{subsec:internal-Yoneda}}"]
    & \REX_{\mathcal{C}}({}_L \Mod, \mathcal{C})^{\op}
    \ar[rd, Rightarrow, shorten = 2ex,
    "\text{Remark~\ref{rem:equivariant-EW-adjoint}}"]
    \arrow[r, "{(-)^{\radj}}"]
    & \LEX_{\mathcal{C}}(\mathcal{C}, {}_L \Mod)
    \ar[d, leftarrow, "{\id}"]
    \arrow[r, "{\EvalAtOne}", "{\eqref{eq:equivalence-eval-at-1}}"']
    & {}_L \Mod \ar[d, leftarrow, "\id"] \\
    {}_L \Mod \arrow[r, "{\Yone_2}"']
    & ({}^H_H \Mod_L^{})^{\op}
    \arrow[u, "{\eqref{eq:equivariant-EW}}"']
    \arrow[r, "{\eqref{eq:equivariant-EW-lex}}"']
    & \LEX_{\mathcal{C}}(\mathcal{C}, {}_L \Mod)
    \arrow[r, "{\EvalAtOne}", "{\eqref{eq:equivalence-eval-at-1}}"']
    & {}_L \Mod
  \end{tikzcd}
\end{equation}
of ($\DD$-twisted) $\mathcal{C}$-module functors, where identity functors are placed because of appearance of the diagram and double arrows ($\Rightarrow$) represent isomorphisms of module functors.
By Remark~\ref{rem:equivariant-EW-adjoint} and the discussion in Subsection~\ref{subsec:internal-Yoneda}, the above diagram commutes up to isomorphisms.

Now we note that there are natural isomorphisms
\begin{subequations}
  \begin{align*}
    \Hom_H(\Yone(M)(N), X)
    & = \Hom_H(\Hom_L(H \catactl M, N), X) \\
    & \cong \Hom_H(\Yone_1(M) \otimes_L N, X)
      \quad (\text{by Lemma~\ref{lem:Yoneda-module}}) \\
    & \cong \Hom_H(\Yone_2(M) \otimes_L N, X)
      \quad (\text{by Lemma~\ref{lem:Yoneda-module-2}}) \\
    & \cong \Hom_L(N, \Hom_H(\Yone_2(M), X))
  \end{align*}
\end{subequations}
for $N \in {}_L \Mod$ and $X \in \mathcal{C}$.
Thus we may, and do, elect the functor
\begin{equation}
  \label{eq:YM-right-adj}
  \Yone(M)^{\radj} := \Hom_{H}(\Yone_2(M), -)
\end{equation}
to a right adjoint of $\Yone(M)$. We denote by
\begin{equation}
  \label{eq:Serre-1-functor}
  \Ser_2: {}_L \Mod \to {}_{\DD}({}_L \Mod),
  \quad \Ser_2(M) = \Hom_{H}(\Yone_2(M), \bfk)
  \quad (M \in {}_L \Mod)
\end{equation}
the twisted module functor obtained by the composition along the bottom row of the diagram \eqref{eq:main-theorem-proof-diagram-1}. Then we have:

\begin{lemma}
  \label{lem:main-theorem-1} 
  The functor $\Ser_2$ is a relative Serre functor for ${}_L\Mod$ whose structure morphism as a twisted module functor is given by the composition
  \begin{gather*}
    \Ser_2(X \catactl M)
    = \Hom_{H}(\Yone_2(X \catactl M), \bfk)
    \xrightarrow{\ \eqref{eq:Y2-M-C-mod-str} \ }
    \Hom_{H}(\Yone_2(M) \catactr X^*, \bfk) \\
    \xrightarrow{\ \eqref{eq:equivariant-EW-bicomod-alg-2} \ }
    \Hom_{H}(\Yone_2(M), X^{**})
    \xrightarrow{\ \eqref{eq:Hom-mod-str} \ }
    X^{**} \catactl \Hom_{H}(\Yone_2(M), \bfk)
    = X^{**} \catactl \Ser_2(M)
  \end{gather*}
  for $X \in \mathcal{C}$ and $M \in {}_L\Mod$. The isomorphism
  \begin{equation*}
    \iHom(M, N)^* \cong \iHom(N, \Ser_2(M))
    \quad (M, N \in {}_L \Mod)
  \end{equation*}
  of twisted $\mathcal{C}$-bimodule functors is induced by the pairing
  \begin{gather*}
    \iHom(N, \Ser_2(M)) \otimes \iHom(M, N)
    \xrightarrow{\quad \icomp \quad}
    \iHom(M, \Ser_2(M))
    \xrightarrow{\quad \varepsilon_M(\bfk) \quad} \bfk,
  \end{gather*}
  where $\varepsilon_M$ is the counit of the adjunction $\Yone(M) \dashv \Yone(M)^{\radj}$.
\end{lemma}
\begin{proof}
  The description of the structure of $\Ser_2$ as a twisted module functor follows from the definition of $\Ser_2$.
  The composition along the top row of the diagram \eqref{eq:main-theorem-proof-diagram-1} is the standard relative Serre functor discussed in Subsection~\ref{subsec:std-realization}.
  Under the choice \eqref{eq:YM-right-adj} of a right adjoint of $\Yone(M)$, the composition of two double arrows in \eqref{eq:main-theorem-proof-diagram-1} is actually equal to the identity natural transformation.   
  Thus $\Ser_2$ is equal to the standard relative Serre functor. The latter half part of this lemma now follows from Lemma~\ref{lem:rel-Ser-pairing}.
\end{proof}

Hence we have obtained a relative Serre functor $\Ser_2$ of ${}_L \Mod$ with structure morphisms written explicitly. To establish Theorem~\ref{thm:main-theorem}, we will give a natural isomorphism $M \cong \Ser_2(M)$ of vector spaces with the use of integrals for $H$. The theorem will be proved by transporting the structure morphisms from $\Ser_2(M)$ to $M$ through this isomorphism.

\subsection{Proof of Theorem~\ref{thm:main-theorem} (1)}

We use the same notation as in the previous subsection.
We recall that $\Lambda := \Lambda_H$ is a non-zero right integral in $H$ and $\lambda_H$ is the right cointegral on $H$ such that $\langle \lambda_H, \Lambda \rangle = 1$. By definition, $\Ser_2(M)$ is a subspace of $(H \otimes M)^{**}$.

\begin{lemma}
  \label{lem:Y2-M-varphi}
  For $M \in {}_L \Mod$, we define the linear map
  \begin{equation}
    \label{eq:Y2-M-varphi-1}
    \varphi_M: \Ser_2(M) \to \Ser(M),
    \quad \varphi_M(\xi) = (\lambda_H \otimes \id_M) \Phi_{H \otimes M}^{-1}(\xi).
  \end{equation}
  This map is an isomorphism of left $L$-modules with the inverse given by
  \begin{equation}
    \label{eq:Y2-M-varphi-2}
    \varphi_M^{-1}(m) = \Phi_{H \otimes M}(\Lambda \otimes m)
    \quad (m \in \Ser(M)).
  \end{equation}
\end{lemma}
\begin{proof}
  We identify the vector space $\Yone_2(M)$ with $H^* \otimes M^*$ via the isomorphism
  \begin{equation*}
    H^* \otimes M^* \to \Yone_2(M) = (H \otimes M)^*,
    \quad h^* \otimes m^* \mapsto [h \otimes m \mapsto \langle h^*, h \rangle \langle m^*, m \rangle].
  \end{equation*}
  Then the action of $H$ on $\Yone_2(M)$, given by \eqref{eq:Y2-M-action}, is expressed as follows:
  \begin{equation*}
    h \cdot (h^* \otimes m^*) = (h \rightharpoonup h^*) \otimes m^*
    \quad (h \in H, h^* \in H^*, m^* \in M^*).
  \end{equation*}
  As is well-known, $\lambda_H$ is a Frobenius form on $H$. Hence the map $\vartheta: H \to H^*$ given by $h \mapsto h \rightharpoonup \lambda_H$ is an isomorphism of left $H$-modules. Now we have
  \begin{gather*}
    \Ser_2(M) = \Hom_H(\Yone_2(M), \bfk)
    \cong \Hom_H(H^* \otimes M^*, \bfk) \\
    \cong \Hom_H(H \otimes M^*, \bfk)
    \cong \Hom_{\bfk}(M^*, \bfk)
    \cong M,
  \end{gather*}
  where the second isomorphism is induced by $\vartheta$. The map $\varphi_M$ is obtained as the composition, and thus it is an isomorphism. It is easy to check that the inverse of $\varphi_M$ is given as stated.

  To complete the proof, we need to check that $\varphi_M$ is $L$-linear. Instead of doing so, we show that $\varphi_M^{-1}$ is $L$-linear. We recall that the action of $L$ on $\Ser(M)$ is given by \eqref{eq:Ser-action}. For $a \in L$, $m \in M$ and $\xi \in \Yone_2(M)$, we have
  \begin{align*}
    \langle \varphi_M^{-1}(a \bullet m), \xi \rangle
    & = \langle \alpha_H, \nu_L(a)_{(-1)} \rangle \langle \varphi_M^{-1}(\nu_L(a)_{(0)} m), \xi \rangle \\
    & = \langle \alpha_H, \nu_L(a)_{(-1)} \rangle \langle \xi, \Lambda \otimes \nu_L(a)_{(0)} m \rangle \\
    {}^{\eqref{eq:right-modular}}
    & = \langle \xi, \nu_L(a)_{(-1)} \Lambda \otimes \nu_L(a)_{(0)} m) \rangle \\
    {}^{\eqref{eq:Y2-M-action}}
    & = \langle \xi \cdot a, \Lambda \otimes m \rangle
      = \langle \varphi_M^{-1}(m), \xi \cdot a \rangle
      = \langle a \cdot \varphi_M^{-1}(m), \xi \rangle.
  \end{align*}
  The proof is done.
\end{proof}

We export the twisted left $\mathcal{C}$-module structure of $\Ser_2$ to $\Ser$ through the isomorphism given by the above lemma. For $X \in {}_H \Mod$ and $M \in {}_L \Mod$, the resulting twisted left $\mathcal{C}$-module structure of $\Ser$ is the following composition:
\begin{subequations}
  \newcommand{\xarr}[1]{\xrightarrow{\makebox[5em]{$\scriptstyle #1$}}}
  \begin{align}
    \label{eq:pf-main-thm-1}
    \Ser(X \catactl M)
    & \xarr{\varphi_{X \catactl M}^{-1}}
      \Hom_H(\Yone_2(X \catactl M), \bfk) \\
    \label{eq:pf-main-thm-2}
    & \xarr{\eqref{eq:Y2-M-C-mod-str}}
      \Hom_H(\Yone_2(M) \otimes X^*, \bfk) \\
    \label{eq:pf-main-thm-3}
    & \xarr{\eqref{eq:equivariant-EW-bicomod-alg-2}}
      \Hom_H(\Yone_2(M), X^{**}) \\
    \label{eq:pf-main-thm-4}
    & \xarr{\eqref{eq:Hom-mod-str-inverse}}
      X^{**} \catactl \Hom_H(\Yone_2(M), \bfk) \\
    \label{eq:pf-main-thm-5}
    & \xarr{\id_{X^{**}} \catactl \varphi_M}
      X^{**} \catactl \Ser(M).
  \end{align}
\end{subequations}  

\begin{proof}[Proof of Theorem~\ref{thm:main-theorem} (1)]
  We compute the composition of \eqref{eq:pf-main-thm-1}--\eqref{eq:pf-main-thm-5}. Let $\{ x_i \}$ be a basis of $X$, and let $\{ x^i \}$ be the dual basis of $X^*$. For $x \in X$ and $m \in M$, we compute
  \begin{align*}
    x \otimes m
    \xmapsto{\makebox[3em]{\scriptsize \eqref{eq:pf-main-thm-1}}}
    & \, [f \mapsto \langle f, \Lambda \otimes x \otimes m \rangle ] \\
    \xmapsto{\makebox[3em]{\scriptsize \eqref{eq:pf-main-thm-2}}}
    & \, [ f \otimes x^* \mapsto \langle f, \Lambda_{(1)} \otimes m \rangle
      \langle x^*, S(\Lambda_{(2)}) x\rangle ] \\
    \xmapsto{\makebox[3em]{\scriptsize \eqref{eq:pf-main-thm-3}}}
    & \, [ f \mapsto \langle f, \Lambda_{(1)} \otimes m \rangle
      \langle x^i, S(\Lambda_{(2)}) x \rangle \Phi_X(x_i) ] \\
    = & \, [ f \mapsto \langle f, \Lambda_{(1)} \otimes m \rangle \Phi_X(S(\Lambda_{(2)}) x) ]
        \qquad =: (\ast 1).
  \end{align*}
  Set $Y = \Yone_2(M) \in {}_H^H\Mod_L$ and consider the left $\mathcal{C}$-module structure
  \begin{equation*}
    \theta_{V,W}: V \catactl \Hom_H(Y, W) \to \Hom_{H}(Y, V \otimes W)
    \quad (V, W \in \mathcal{C})
  \end{equation*}
  of $\Hom_{H}(Y, -)$ given in Lemma~\ref{lem:Hom-mod-str}.
  Let $\{ w_k \}$ be a basis of $H \otimes M$, and let $\{ w^k \}$ be the dual basis of $Y = (H \otimes M)^*$. For simplicity of notation, we write
  \begin{equation*}
    T_g(y) = S(y_{(-1)}) g(y_{(0)})
    \quad (g \in \Hom_{\bfk}(Y, X^{**}), y \in Y).
  \end{equation*}
  Then, for $g \in \Hom_{H}(Y, X^{**})$, we have
  \begin{equation}
    \label{eq:main-thm-proof-1}
    \theta_{X^{**},\bfk}^{-1}(g)
    \mathop{=}^{\eqref{eq:Hom-mod-str-inverse}}
    \Phi_{X}(x_i) \otimes (\Phi_{X^*}(x^i) \circ T_g)
    = T_g(w^k) \otimes \Phi_{H \otimes M}(w_k)
  \end{equation}
  in $X^{**} \otimes Y^*$.
  We should be careful to deal with this formula.
  By our convention, the sum over $k$ is implicit in \eqref{eq:main-thm-proof-1}.
  Although the sum $T_g(w^k) \otimes \Phi_{H \otimes M}(w_k)$ certainly belongs to the vector space $X^{**} \otimes \Hom_H(H \catactl M, \bfk)$, each term is only an element of the vector space $X^{**} \otimes Y^*$.

  By \eqref{eq:main-thm-proof-1}, we have
  \begin{align*}
    (\ast 1) \xmapsto{\makebox[3em]{\scriptsize \eqref{eq:pf-main-thm-4}}}
    & \, \text{$T_{g}(w^k) \otimes \Phi_{H \otimes M}(w_k)$ with $g = (\ast 1)$} \\
    = & \, S(w^k_{(-1)}) \langle w^k_{(0)}, \Lambda_{(1)} \otimes m \rangle
      \Phi_X( S(\Lambda_{(2)}) x )
      \otimes \Phi_{H \otimes M}(w_k) \\[2pt]
    {}^{\eqref{eq:claim-Y2-M-coact-1}} =
    & \, S(S(\Lambda_{(1)}) b_{i(-1)}) \Phi_X( S(\Lambda_{(3)}) x ) \\[-2pt]
    & \qquad \qquad \otimes \langle \lambda_L, b_{i(0)} \rangle \langle w^k, a^i_{(-1)} \Lambda_{(2)}
      \otimes a^i_{(0)} m \rangle \Phi_{H \otimes M}(w_k) \\[2pt]
    = \,
    & \Phi_X( S^3(b_{i(-1)}) S^4(\Lambda_{(1)}) S(\Lambda_{(3)}) x ) \\[-2pt]
    & \qquad \qquad \otimes \langle \lambda_L, b_{i(0)} \rangle
      \Phi_{H \otimes M}(a^i_{(-1)} \Lambda_{(2)} \otimes a^i_{(0)} m)
      \qquad =: (\ast 2)
  \end{align*}
  in the vector space $X^{**} \otimes Y^*$.

  We shall apply the map \eqref{eq:pf-main-thm-5} to $(\ast 2)$.
  By the reason noted after \eqref{eq:main-thm-proof-1}, the term `$\Phi_{H \otimes M}(a^i_{(-1)} \Lambda_{(2)} \otimes a^i_{(0)} m)$' may not belong to the source of $\varphi_M$. To resolve this technical problem, we introduce the linear map
  \begin{align*}
    \widetilde{\varphi}_{X,M}: X^{**} \otimes Y^*
    & \to X^{**} \otimes M, \\
    \beta \otimes \gamma
    & \mapsto \beta \otimes ((\lambda_H \otimes \id_M)\Phi_{H \otimes M}^{-1}(\gamma)).
  \end{align*}
  It is obvious that the equation $\widetilde{\varphi}_{X,M}(T) = (\id_{X^{**}} \otimes \varphi_M)(T)$ holds whenever $T$ belongs to the source of $\id_{X^{**}} \otimes \varphi_M$. Thanks to this observation, we continue the computation as follows:
  \begin{align*}
    (\ast 2) \xmapsto{\makebox[3em]{\scriptsize \eqref{eq:pf-main-thm-5}}}    
    & \, \Phi_X( S^3(b_{i(-1)}) S^4(\Lambda_{(1)}) S(\Lambda_{(3)}) x ) \\[-2pt]
    & \qquad \qquad \otimes \langle \lambda_L, b_{i(0)} \rangle \langle \lambda_H, a^i_{(-1)} \Lambda_{(2)} \rangle a^i_{(0)} m
      \qquad =: (\ast 3).
  \end{align*}
  For all elements $a, h \in H$, we have
  \begin{gather}
    \label{eq:pf-main-thm-7}
    h_{(1)} \langle \lambda_H, a h_{(2)} \rangle
    = S(a_{(1)}) a_{(2)} h_{(1)} \langle \lambda_H, a_{(3)} h_{(2)} \rangle
    \mathop{=}^{\eqref{eq:distinguished-grouplike}}
    S(a_{(1)}) g_H \langle \lambda_H, a_{(2)} h \rangle, \\
    \label{eq:pf-main-thm-8}
    \langle \lambda_H, a h_{(1)} \rangle S(h_{(2)})
    = \langle \lambda_H, a_{(1)} h_{(1)} \rangle S(S^{-1}(a_{(3)}) a_{(2)}h_{(2)})
    = \langle \lambda_H, a_{(1)} h \rangle a_{(2)}.
  \end{gather}
  For simplicity, we set $\overline{\alpha}_H = \alpha_H \circ S$. 
  Radford's $S^4$-formula \cite[Proposition 6]{MR0407069} states that the following equation holds:
  \begin{equation}
    \label{eq:Radford-S4}
    S^4(h) = g_H (\overline{\alpha}_H \rightharpoonup h \leftharpoonup \alpha_H)  g_H^{-1}
    \quad (h \in H).
  \end{equation}
  Now, for $a \in H$, we have
  \begin{align*}
    & S^4(\Lambda_{(1)}) S(\Lambda_{(3)}) \langle \lambda_H, a \Lambda_{(2)} \rangle \\
    {}^{\eqref{eq:pf-main-thm-7}} =
    & \, S^4(S(a_{(1)}) g_H)) \langle \lambda_H, a_{(2)} \Lambda_{(1)} \rangle S(\Lambda_{(2)}) \\
    {}^{\eqref{eq:pf-main-thm-8}} =
    & \, S^4(S(a_{(1)}) g_H) \langle \lambda_H, a_{(2)} \Lambda \rangle a_{(3)} \\
    {}^{\eqref{eq:right-modular}} =
    & \, S^5(a_{(1)}) g_H \langle \alpha_H, a_{(2)} \rangle \langle \lambda_H, \Lambda \rangle a_{(3)} \\
    {}^{\eqref{eq:Radford-S4}} =
    & \, S(g_H \langle \alpha_H, a_{(1)} \rangle a_{(2)} \langle \overline{\alpha}_H, a_{(3)} \rangle g_H^{-1}) g_H \langle \alpha_H, a_{(4)} \rangle a_{(5)} \\
    = & \, g_H \langle \alpha_H, a_{(1)} \rangle S(a_{(2)}) a_{(3)}
        = g_H \langle \alpha_H, a \rangle.
  \end{align*}
  This reduces the expression $(\ast 3)$ to the form as stated. The proof is done.
\end{proof}

\subsection{Proof of Theorem~\ref{thm:main-theorem} (2)}

We keep the notation of the previous subsection.
We have chosen \eqref{eq:YM-right-adj} as a right adjoint of $\Yone(M)$ for $M \in {}_L \Mod$. Under this choice of right adjoints, the counit of the adjunction $\Yone(M) \dashv \Yone(M)^{\radj}$ is given by the following composition:
\begin{subequations}
  \newcommand{\xarr}[1]{\xrightarrow{\makebox[6em]{$\scriptstyle #1$}}}
  \begin{align*}
    \varepsilon_M(N) : \Yone(M) \Yone(M)^{\radj}(N) = \,
    & \Hom_L(H \catactl M, \Hom_H(\Yone_2(M), N)) \\
    \xarr{\text{Lemma~\ref{lem:Yoneda-module-2}}} \,
    & \Hom_L(H \catactl M, \Hom_H(\Yone_1(M), N)) \\
    \xarr{\text{\eqref{eq:dagger-eval-inv}}} \,
    & \Yone_1(M) \otimes_L \Hom_H(\Yone_1(M), N) \\
    \xarr{\mathtt{eval}_{N}} \, & N,
  \end{align*}
\end{subequations}
where $\mathtt{eval}_N(y \otimes_L f) = f(y)$.
By Lemma~\ref{lem:rel-Ser-pairing} (b), the isomorphism
\begin{equation*}
  \phi'_{M,N}: \iHom(N, \Ser_2(M)) \to \iHom(M, N)^*
  \quad (M, N \in {}_L \Mod)
\end{equation*}
of $\mathcal{C}$-bimodule functors is induced by the pairing given by the composition
\begin{gather*}
  \iHom(N, \Ser_2(M)) \otimes \iHom(M, N)
  \xrightarrow{\quad \icomp \quad}
  \iHom(M, \Ser_2(M))
  \xrightarrow{\quad \varepsilon_M(\bfk) \quad} \bfk.
\end{gather*}
In actuality, we would like to use the functor $\Ser$ of  Theorem~\ref{thm:main-theorem} as a relative Serre functor rather than $\Ser_2$. Thus we define
\begin{equation*}
  \phi^{}_{M,N} := \phi'_{M,N} \circ \iHom(N, \varphi^{-1}_M):
  \iHom(N, \Ser(M)) \to \iHom(M, N)^*,
\end{equation*}
where $\varphi_M: \Ser_2(M) \to \Ser(M)$ is the isomorphism given by Lemma~\ref{lem:Y2-M-varphi}.
Then $(\Ser, \phi)$ is a relative Serre functor of ${}_L \Mod$.

\begin{proof}[Proof of Theorem~\ref{thm:main-theorem} (2)]
  By the naturality of $\icomp$, we have
  \begin{align*}
    & \eval_{\iHom(M, N)} \circ (\phi_{M,N}^{} \otimes \id_{\iHom(M, N)}) \\
    & = \varepsilon_{M}(\bfk) \circ \icomp_{M, N, \Ser_2(M)} \circ (\iHom(N, \varphi^{-1}_M) \otimes \id_{\iHom(M, N)}) \\
    & = \varepsilon_{M}(\bfk) \circ \iHom(M, \varphi^{-1}_M) \circ \icomp_{M,N,\Ser(M)}.
  \end{align*}
  Thus, to prove Theorem~\ref{thm:main-theorem} (2), it is sufficient to prove
  \begin{equation*}
    \varepsilon_{M}(\bfk) \circ \iHom(M, \varphi_M^{-1}) = \itrace_M
    \quad (M \in {}_L \Mod),
  \end{equation*}
  where $\itrace_M$ is the map defined by \eqref{eq:iHom-trace}.
  Let $t_1, \dotsc, t_k$ and $t^1, \dotsc, t^k$ as in the statement of Theorem~\ref{thm:main-theorem} (2). For $\xi \in \iHom(M, \Ser(M))$, we have
  \begin{gather*}
    (\varepsilon_M(\bfk) \iHom(M, \varphi_M^{-1}))(\xi)
    = \mathtt{eval}_{\bfk}(t^i \otimes_L (\psi_M^{*}\varphi_M^{-1}\xi)(t_i))
    = \langle \psi_M^{*}\varphi_M^{-1}\xi(t_i), t^i \rangle \\
    \mathop{=}^{\eqref{eq:Y2-M-varphi-2}}
    \langle \Phi_{H \otimes M}(\Lambda \otimes \xi(t_i)), \psi_M(t^i) \rangle
    \mathop{=}^{\eqref{eq:Y2-M-psi-1}}
    \langle \lambda_L t^i, \Lambda \otimes \xi(t_i) \rangle
    \mathop{=}^{\eqref{eq:iHom-trace}}
    \underline{\mathrm{tr}}_M(\xi). \qedhere
  \end{gather*}
\end{proof}

\subsection{Pivotal structures of comodule algebras}

A {\em pivotal element} of $H$ is a grouplike element $g \in H$ such that the equation $g h g^{-1} = S^2(h)$ holds for all $h \in H$. Given a pivotal element $g \in H$, we define a natural transformation
\begin{equation*}
  \mathfrak{p}_X: X \to X^{**},
  \quad x \mapsto \Phi_X(g x)
\end{equation*}
for $X \in {}_{H}\Mod$. Then $\mathfrak{p} = \{ \mathfrak{p}_X \}$ is a pivotal structure of ${}_H \Mod$. It is well-known that this construction establishes a bijection between the set of pivotal elements of $H$ and the set of pivotal structures of ${}_H \Mod$.

Now we fix a pivotal element $g_{\piv} \in H$. Let $L$ be an exact left $H$-comodule algebra. We fix a Frobenius form $\lambda_L$ on $L$ and define the algebra automorphism $\nu_L'$ of $L$ and the element $\mathfrak{T} \in H \otimes L$ by~\eqref{eq:twisted-Nakayama} and \eqref{eq:Serre-mod-str-element}, respectively.

\begin{definition}
  A {\em pivotal element} of $L$ respecting $g_{\piv}$ is an invertible element $\widetilde{g} \in L$ satisfying the following two equations:
  \begin{gather}
    \label{eq:comod-alg-pivot-1}
    \mathfrak{T} \cdot \delta_L(\widetilde{g}) = g_{\piv}^{} \otimes \widetilde{g}, \\
    \label{eq:comod-alg-pivot-2}
    \widetilde{g} a \widetilde{g}{}^{-1} = \nu_L'(a) \quad (a \in L).
  \end{gather}
\end{definition}

By Theorem~\ref{thm:main-theorem}, it is routine to check:

\begin{theorem}
  \label{thm:comod-alg-pivotal}
  Given a pivotal element $\widetilde{g}$ of $L$, we define
  \begin{equation*}
    \widetilde{\mathfrak{p}}_M: M \to \Ser(M),
    \quad m \mapsto \widetilde{g} \cdot m
    \quad (M \in {}_L \Mod),
  \end{equation*}
  where $\Ser$ is the relative Serre functor given in Theorem~\ref{thm:main-theorem}. Then $\widetilde{\mathfrak{p}} = \{ \widetilde{\mathfrak{p}}_M \}$ is a pivotal structure of ${}_L \Mod$ respecting the pivotal structure of $\mathcal{C}$ associated to the pivotal element $g_{\piv}$. This establishes a one-to-one correspondence between the set of pivotal elements of $L$ and the set of pivotal structures on ${}_L \Mod$.
\end{theorem}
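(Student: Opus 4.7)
The plan is to verify that the assignment $\widetilde{g} \mapsto \widetilde{\mathfrak{p}}$ in fact produces a pivotal structure and then to show it is a bijection. First I would check that, for each pivotal element $\widetilde{g}$, the map $\widetilde{\mathfrak{p}}_M: M \to \Ser(M)$, $m \mapsto \widetilde{g} \cdot m$, is $L$-linear; since the $L$-action on $\Ser(M)$ is $a * m = \nu_L'(a) m$, the identity $\widetilde{g}(am) = \nu_L'(a)(\widetilde{g} m)$ required for $L$-linearity is exactly \eqref{eq:comod-alg-pivot-2}. Naturality in $M$ is immediate since $\widetilde{g}$ is independent of $M$, and invertibility follows from the invertibility of $\widetilde{g}$ in $L$.

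Next I would verify the defining commutative diagram of Definition~\ref{def:pivotal-module-cat} at the level of elements. According to Theorem~\ref{thm:main-theorem-2}, the map recorded there is $\xi_{X,M}^{-1}: \Ser(X \catactl M) \to X^{**} \catactl \Ser(M)$, $x \otimes m \mapsto \Phi_X(g_L^{-1} g_H^{} x) \otimes m$, so that $\xi_{X,M}(\Phi_X(y) \otimes m) = g_H^{-1} g_L^{} y \otimes m$ for $y \in X$, $m \in M$. On $x \otimes m \in X \catactl M$ the outer route gives $\widetilde{\mathfrak{p}}_{X \catactl M}(x \otimes m) = \widetilde{g}_{(-1)} x \otimes \widetilde{g}_{(0)} m$, while the lower route computes to $\xi_{X,M}(\Phi_X(g_{\piv} x) \otimes \widetilde{g} m) = g_H^{-1} g_L^{} g_{\piv}^{} x \otimes \widetilde{g} m$. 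The two expressions agree by \eqref{eq:comod-alg-pivot-1}; this establishes that $\widetilde{\mathfrak{p}}$ is a pivotal structure respecting $g_{\piv}$.

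For the bijection, given any pivotal structure $\widetilde{\mathfrak{p}}$ of ${}_L \Mod$, I would set $\widetilde{g} := \widetilde{\mathfrak{p}}_L(1) \in L$ and use naturality applied to the $L$-linear map $f_m : L \to M$, $a \mapsto am$. Since $\Ser(f_m)$ acts on underlying vector spaces by the same formula as $f_m$, naturality yields $\widetilde{\mathfrak{p}}_M(m) = f_m(\widetilde{g}) = \widetilde{g} m$ for all $M$ and $m \in M$; in particular the assignment of the theorem is injective and every pivotal structure has the stated form. The $L$-linearity of $\widetilde{\mathfrak{p}}_L$ then forces $\widetilde{g} a = \nu_L'(a) \widetilde{g}$ for all $a \in L$, which is \eqref{eq:comod-alg-pivot-2}, and the bijectivity of $\widetilde{\mathfrak{p}}_L$ ensures that $\widetilde{g}$ is a unit in $L$. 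Substituting the recovered formula into the pivotal structure axiom of the previous paragraph and taking $M = L$, $m = 1$ reduces the resulting equality in $H \otimes L$ to $\delta(\widetilde{g}) = g_H^{-1} g_L^{} g_{\piv}^{} \otimes \widetilde{g}$, which is \eqref{eq:comod-alg-pivot-1}.

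The only genuine care point is keeping the direction of $\xi_{X,M}$ straight, since Theorem~\ref{thm:main-theorem-2} records $\xi_{X,M}^{-1}$ rather than $\xi_{X,M}$ itself; everything else is direct from the definitions and the naturality/representability of ${}_L \Mod$ by the regular module $L$.
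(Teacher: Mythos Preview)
Your proposal is correct and matches the paper's intended approach: the paper itself gives no detailed proof, only the remark ``By Theorem~\ref{thm:main-theorem-2}, we easily prove,'' and your argument is precisely the direct verification this remark points to. The only tiny imprecision is in the last step of the inverse construction: to land in $H \otimes L$ you should specialize not only $M = L$, $m = 1$ but also $X = H$, $x = 1$; otherwise the argument is complete.
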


\subsection{Grouplike cointegrals on comodule algebras}
\label{subsec:gl-coint}

Our formula~\eqref{eq:Ser-mod-structure} of the twisted module structure of a relative Serre functor may not be convenient for practical use. We now remark that the formula reduces to a simpler form if the Frobenius form on $L$ is a `grouplike cointegral' in the following sense:

\begin{definition}[Kasprzak \cite{2018arXiv181007114K}]
  \label{def:gl-cointegral}
  Let $H$ be a Hopf algebra, and let $L$ be a left $H$-comodule algebra. A {\em grouplike cointegral} on $L$ is a pair $(g, \lambda)$ consisting of a grouplike element $g \in H$ and a linear form $\lambda: L \to \bfk$ such that the equation
  \begin{equation}
    \label{eq:g-cointegral}
    a_{(-1)} \langle \lambda, a_{(0)} \rangle = \langle \lambda, a \rangle g
  \end{equation}
  holds for all elements $a \in L$. If $(g, \lambda)$ is a grouplike cointegral on $L$ in this sense, then $\lambda$ is said to be a {\em $g$-cointegral} on $L$. A $g$-cointegral on $L$ is said to be {\em non-degenerate} if it is a Frobenius form on $L$.
\end{definition}

For example, a non-zero right cointegral on $H$ is a non-degenerate $g_H$-cointegral, where $g_H$ is the distinguished grouplike element of $H$ defined by \eqref{eq:distinguished-grouplike}.

\begin{theorem}
  \label{thm:main-theorem-2}
  Let $H$, $L$, $\lambda_L$ and $\mathfrak{T}$ be as in Theorem~\ref{thm:main-theorem}. If the Frobenius form $\lambda_L$ is a $g_L$-cointegral for some grouplike element $g_L \in H$, then we have
  \begin{equation}
    \label{eq:thm-main-theorem-2}
    \mathfrak{T} = g_L^{-1} g_H^{} \otimes 1_L.
  \end{equation}
  Thus, for $X \in \mathcal{C}$ and $M \in {}_{L} \Mod$, the formula \eqref{eq:Ser-mod-structure} of the twisted module structure of the relative Serre functor reduces to the following form:
  \begin{equation*}
    \Ser(X \catactl M) \to X^{**} \catactl \Ser(M),
    \quad x \otimes m \mapsto \Phi_X(g_L^{-1} g_H^{} x) \otimes m.
  \end{equation*}
\end{theorem}
\begin{proof}
  Using the same notation as in  Theorem~\ref{thm:main-theorem}, we compute:
  \begin{gather*}
    \mathfrak{T}
    \mathop{=}^{\eqref{eq:g-cointegral}}
    \langle \lambda_L, b_{i} \rangle S^3(g_L^{}) g_H^{}
    \otimes \langle \alpha_H, a^i_{(-1)} \rangle a^i_{(0)}
    \mathop{=}^{\eqref{eq:Frob-str-L-1}}
    g_L^{-1} g_H^{} \otimes 1_L^{}. \qedhere
  \end{gather*}
\end{proof}
Suppose that a pivotal element $g_{\piv} \in H$ is given.
By the above theorem, a pivotal element of $L$ respecting $g_{\piv}$ is the same thing as an invertible element $\widetilde{g} \in L$ such that the equations
\begin{equation}
  \label{eq:comod-alg-pivot-3}
  \delta_L(\widetilde{g}) = g_H^{-1} g_L^{} g_{\piv}^{} \otimes \widetilde{g}
  \quad \text{and} \quad
  \widetilde{g} a \widetilde{g}{}^{-1} = \nu_L'(a)
\end{equation}
hold for all $a \in L$. We note that $g_{\piv}^{}$ belongs to the center of the group $G(H)$ of grouplike elements of $H$ since $S^2(g) = g$ for all $g \in G(H)$. Radford's $S^4$-formula~\eqref{eq:Radford-S4} implies that $g_H^{}$ is also central in $G(H)$. Thus the appearance order of $g_H^{-1}$, $g_L^{}$ and $g_{\piv}^{}$ in \eqref{eq:comod-alg-pivot-3} does not matter.

In general, a left $H$-comodule algebra does not admit a non-degenerate grouplike cointegral. However, as the following two propositions demonstrate, there is a large class of $H$ and $L$ for which the assumption of the above theorem is satisfied.

\begin{proposition}
  Suppose that the Hopf algebra $H$ is either cosemisimple or pointed. Then every finite-dimensional left $H$-comodule algebra admits a non-zero grouplike cointegral.
\end{proposition}
\begin{proof}
  Let $L$ be a finite-dimensional left $H$-comodule algebra, and let $g \in H$ be a grouplike element. We note that the space of $g$-cointegrals on $L$ is identified with the space of left $H$-comodule maps from $L$ to $\bfk g$.

  Suppose that $H$ is cosemisimple. The unit map $u: \bfk \to H$ is $H$-colinear. By the cosemisimplicity, $u$ splits in the category of left $H$-comodules. Namely, there is a left $H$-comodule map $\lambda : H \to \bfk$ such that $\lambda u = \id_{\bfk}$. By the above remark, $\lambda$ is a non-zero $1$-cointegral on $L$.

  We now consider the case where $H$ is pointed. Let $\lambda : L \to V$ be a surjective left $H$-comodule map from $L$ to a simple left $H$-comodule $V$ (which exists since $L$ is finite-dimensional). Since $H$ is pointed, $V \cong \bfk g$ for some grouplike element $g \in H$. Thus $\lambda$ may be viewed as a non-zero $g$-cointegral.
\end{proof}

\begin{proposition}[{\it cf}. {\cite[Theorem 4.11]{2018arXiv181007114K}}]
  We assume that the base field $\bfk$ is algebraically closed.
  Let $L$ be a finite-dimensional $H$-simple left $H$-comodule algebra, and let $g \in H$ be a grouplike element. Then the dimension of the space of $g$-cointegrals on $L$ is 0 or 1. If a non-zero $g$-cointegral on $L$ exists, then it is non-degenerate.
\end{proposition}
\begin{proof}
  We consider the category ${}^H\Mod_L$ of right $L$-modules in ${}^H\Mod$.
  Since $L$ is a left $L$-module in ${}^H\Mod$, its right dual ${}^*L$ is an object of ${}^H\Mod_L$ in a canonical way. Let $I^g$ denote the space of $g$-cointegrals on $L$. Then there are isomorphisms
  \begin{equation}
    \label{eq:relative-Hopf-and-cointegrals}
    I^g
    \cong {}^H\Mod(L, \bfk g)
    \cong {}^H\Mod_L(L, M)
    \quad (M := \bfk g \otimes {}^* \! L)
  \end{equation}
  of vector spaces. Since every object of ${}^H\Mod_L$ is free as a right $L$-module \cite[Theorem 4.2]{MR2286047}, both $L$ and $M$ are simple objects in ${}^H\Mod_L$. Thus, by Schur's lemma, the dimension of $I^g$ is 0 or 1.
  The remaining part of this lemma follows from Schur's lemma and that \eqref{eq:relative-Hopf-and-cointegrals} restricts to a bijection between the set of non-degenerate $g$-cointegrals and the set of isomorphisms from $L$ to $M$ in ${}^H\Mod_L$.
\end{proof}

\subsection{Hopf subalgebras}

In this subsection, we consider the case where $L$ is a Hopf subalgebra of $H$.
We first introduce the following notation:
Let $X$ be a Hopf algebra with antipode $S_X$.
Given a grouplike element $g \in X$ and an algebra map $\beta: X \to \bfk$, we define
\begin{equation*}
  \inner(g)(x) = g x g^{-1}
  \quad \text{and} \quad
  \inner(\beta)(x) = \beta \rightharpoonup x \leftharpoonup \overline{\beta}
\end{equation*}
for $x \in X$, where $\overline{\beta} = \beta \circ S_X$ is the convolution inverse of $\beta$.

Let $L$ be a Hopf subalgebra of $H$ and view it as a left $H$-comodule algebra by the comultiplication of $H$. By the Nichols-Zoeller theorem \cite{MR1243637}, we see that $L$ is in fact an exact left $H$-comodule algebra.
As a Frobenius form on $L$, we take a non-zero right cointegral $\lambda_L$ on $L$. Then we have
\begin{equation*}
  \lambda_L(y x)
  = \lambda_L(S^2(x \leftharpoonup \overline{\alpha}_L) y)
  \quad (x, y \in L)
\end{equation*}
by \cite[Theorem 3]{MR1265853}.
In other words, the Nakayama automorphism of $L$ is given by $\nu_L(x) = S^2(x \leftharpoonup \overline{\alpha}_L)$ for $x \in L$. Thus the automorphism $\nu_L'$, defined by \eqref{eq:twisted-Nakayama}, is given by the following formula:
\begin{equation}
  \label{eq:twisted-Nakayama-Hopf-sub}
  \nu_L'(x)
  = \langle \overline{\alpha}_L, x_{(1)} \rangle \langle \alpha_H, S^2(x_{(2)}) \rangle S^2(x_{(3)})
  = S^2(x \leftharpoonup \overline{\alpha}_L \alpha_H)
  \quad (x \in L).
\end{equation}

Suppose that a pivotal element $g_{\piv}^{}$ of $H$ is given. We investigate when the left $\mathcal{C}$-module category ${}_L \Mod$ admits a pivotal structure (which is a notion different from a pivotal structure of the finite tensor category ${}_L\Mod$). The defining formula \eqref{eq:distinguished-grouplike} of $g_L$ says that $\lambda_L$ is a $g_L$-cointegral of $L$ in the sense of Definition~\ref{def:gl-cointegral}.
Hence the formula \eqref{eq:thm-main-theorem-2} for the element $\mathfrak{T} \in H \otimes L$ is available.

\begin{theorem}
  \label{thm:Hopf-subalg-pivotal}
  For a Hopf subalgebra $L$ of $H$, the following are equivalent:
  \begin{enumerate}
  \item The left $\mathcal{C}$-module category ${}_L \Mod$ admits a pivotal structure respecting the pivotal structure of $\mathcal{C}$ corresponding to $g_{\piv}$.
  \item $g_{\piv}^{} g_H^{-1} \in L$ and $\alpha_H |_L = \alpha_L$.
  \end{enumerate}
  If these equivalent conditions hold, then $g_H^{-1} g_L^{} g_{\piv}^{} \in L$ is a pivotal element of the left $H$-comodule algebra $L$.
\end{theorem}
\begin{proof}
  Suppose that ${}_L \Mod$ admits a pivotal structure. Then, by Theorem~\ref{thm:comod-alg-pivotal}, the left $H$-comodule algebra $L$ has a pivotal element $\widetilde{g} \in L$. Since $\widetilde{g}$ is invertible, we have $\varepsilon(\widetilde{g}) \ne 0$. Thus, by renormalizing $\widetilde{g}$, we may assume $\varepsilon(\widetilde{g}) = 1$. By \eqref{eq:comod-alg-pivot-1} and \eqref{eq:thm-main-theorem-2}, we have
  $\widetilde{g} = (\id_H \otimes \varepsilon) \Delta(\widetilde{g}) = g_H^{-1} g_L^{} g_{\piv}^{}$. Hence,
  \begin{equation*}
    g_H^{-1} g_{\piv}^{} = \widetilde{g} g_L^{-1} \in L.
  \end{equation*}
  By Radford's $S^4$-formula \eqref{eq:Radford-S4}, the inner automorphism induced by $\widetilde{g} = g_H^{-1} g_L^{} g_{\piv}^{}$ is computed as follows:
  \begin{align*}
    \inner(\widetilde{g})
    = \inner(\overline{\alpha}_H) \circ S^{-4} \circ S^4 \circ \inner(\alpha_L) \circ S^2
    = S^2 \circ \inner(\overline{\alpha}_H \alpha_L).
  \end{align*}
  By \eqref{eq:comod-alg-pivot-2} and \eqref{eq:twisted-Nakayama-Hopf-sub}, we have $\overline{\alpha}_H \alpha_L \rightharpoonup x = x$ for all $x \in L$. This is equivalent to the equation $\alpha_H|_L = \alpha_L$. Thus (2) holds.

  Suppose, conversely, that (2) holds. Then $\widetilde{g} := g_H^{-1} g_L^{} g_{\piv}^{}$ is an invertible element of $L$. Since $g_H^{}$, $g_L^{}$ and $g_{\piv}$ are grouplike elements, $\widetilde{g}$ satisfies \eqref{eq:comod-alg-pivot-1}. By Radford's $S^4$-formula \eqref{eq:Radford-S4}, we verify \eqref{eq:comod-alg-pivot-2} in the same way as above. Thus $\widetilde{g}$ is a pivotal element. Hence ${}_L \Mod$ admits a pivotal structure. The proof is done.
\end{proof}

By applying Theorem~\ref{thm:Hopf-subalg-pivotal} to $L = \bfk$, we obtain:

\begin{corollary}
  \label{cor:Hopf-subalg-pivotal}
  The left $\mathcal{C}$-module category ${}_{\bfk}\Mod$ admits a pivotal structure if and only if $g_{\piv}^{} = g_{H}^{}$. 
\end{corollary}

We close this section by giving some remarks and (counter)examples.

\begin{remark}
  The condition $\alpha_H|_L = \alpha_L$ appearing in Theorem~\ref{thm:Hopf-subalg-pivotal} is equivalent to that $L \subset H$ is a Frobenius extension of algebras \cite{MR1401518}.
\end{remark}

\begin{remark}
  Let $\mathcal{M}$ be an exact left $\mathcal{C}$-module category. By Theorem~\ref{thm:Hopf-subalg-pivotal}, we see that whether $\mathcal{M}$ admits a pivotal structure depends on the choice of a pivotal structure of $\mathcal{C}$. For example, let $G$ be a finite group, and let $z$ be a central element of $G$. Then $z$ is a pivotal element of $\bfk G$. For a subgroup $F$ of $G$, the left ${}_{\bfk G}\Mod$-module category ${}_{\bfk F}\Mod$ admits a pivotal structure if and only if $z \in F$.
\end{remark}

\begin{remark}
  If the condition $g_{\piv} = g_H$ of Corollary \ref{cor:Hopf-subalg-pivotal} is satisfied, then we have
  \begin{equation*}
    S^2 = S^{-2} \circ S^4 = \inner(g_{\piv}^{-1}) \circ \inner(g_H^{}) \circ \inner(\overline{\alpha}_H)
    = \inner(\overline{\alpha}_H)
  \end{equation*}
  by Radford's $S^4$-formula \eqref{eq:Radford-S4}. Thus, when $H$ is unimodular ({\it i.e.}, $\alpha_H$ is equal to the counit of $H$), the condition $g_{\piv} = g_H^{}$ implies $S^2 = \id_H$. In characteristic zero, it also follows from the Larson-Radford theorem that $H$ is semisimple.
\end{remark}

\begin{remark}
  In \cite{MR3943747}, the notion of a {\em matched pivotal structure} for a module category is introduced and studied.
  According to \cite[Example 2.7]{MR3943747}, there is a pivotal structure of $\mathcal{C}$ matched to the left $\mathcal{C}$-module category ${}_{\bfk}\Mod$ if and only if $S^2 = \id_H$. Thus there are finite-dimensional pivotal Hopf algebras $H$ such that the left $\mathcal{C}$-module category ${}_{\bfk}\Mod$ has a pivotal structure in our sense but no matched pivotal structures in the sense of \cite{MR3943747}. The Taft algebra $T(\omega)$ is such an example (see Subsection \ref{subsec:taft-algebra}).
  We note that the relationship between a matched pivotal structure and a pivotal structure in our sense is discussed in \cite[Appendix A]{MR3943747}.
  Further examples of where two types of structures do not agree are also given in there.
\end{remark}

\begin{remark}
  \label{rem:non-pivotal-module-but}
  Theorem~\ref{thm:Hopf-subalg-pivotal} yields examples of non-pivotal exact left $\mathcal{C}$-module categories $\mathcal{M}$ such that $\mathcal{C}_{\mathcal{M}}^*$ is a pivotal finite tensor category: Let $L$ be a finite-dimensional non-unimodular Hopf algebra admitting a pivotal element $g_{\piv}^{}$ ({\it e.g.}, the Taft algebra). We regard $L$ as a Hopf subalgebra of the Drinfeld double $H := D(L)$. Then $g_{\piv}^{} \in L$ is also a pivotal element of $H$. Since the Drinfeld double is unimodular \cite{MR1220770}, we have $\alpha_H|_{L} = \varepsilon \ne \alpha_L$. Thus, by Theorem~\ref{thm:Hopf-subalg-pivotal}, the left $\mathcal{C}$-module category $\mathcal{M} := {}_{L}\Mod$ is not pivotal (no matter how we change the pivotal element of $H$). Nevertheless, since there are equivalences
  $\mathcal{C}^*_{\mathcal{M}} \approx \mathcal{Z}(\mathcal{M})^*_{\mathcal{M}} \approx \mathcal{M} \boxtimes \mathcal{M}^{\rev}$
  of tensor categories \cite{MR3242743}, $\mathcal{C}^*_{\mathcal{M}}$ admits a pivotal structure.
\end{remark}

\begin{remark}
  \label{rem:non-symmetric-iEnd}
  Let $\mathcal{D}$ be a finite tensor category. A Frobenius algebra $(A, \lambda)$ in $\mathcal{D}$ has a canonical coalgebra structure with counit $\lambda$ \cite{MR2500035}. A (normalized) {\em special} Frobenius algebra in $\mathcal{D}$ is a Frobenius algebra $(A, \lambda)$ in $\mathcal{C}$ such that $\mu \Delta = \id_A$ and $\lambda u = \beta \id_{\unitobj}$ for some $\beta \in \bfk^{\times}$, where $\mu$, $\Delta$ and $u$ are the multiplication, the comultiplication and the unit of $A$, respectively.

  By Theorem~\ref{thm:sym-Frobenius-alg}, the algebra $\iEnd(M)$ in $\mathcal{C}$ is a symmetric Frobenius algebra if $M$ is an object of a pivotal left $\mathcal{C}$-module category. We remark that $\iEnd(M)$ is not a special Frobenius algebra in general. To give such an example, we consider the left $\mathcal{C}$-module category $\mathcal{M} := {}_{\bfk}\Mod$. Then $A := \iEnd(\bfk)$ is an algebra in $\mathcal{C}$ such that the functor $\iHom(\bfk, -): \mathcal{M} \to \mathcal{C}_A$ is an equivalence. By Theorem~\ref{thm:Frobenius-alg} and $\Ser(\bfk) = \bfk$, the algebra $A$ is Frobenius. Thus we have isomorphisms
  \begin{equation*}
    \Hom_{\mathcal{C}}(A, \unitobj)
    \cong \Hom_{\mathcal{C}_A}(A, A^*)
    \cong \Hom_{\mathcal{C}_A}(A, A)
    \cong \Hom_{\mathcal{M}}(\bfk, \bfk) \cong \bfk.
  \end{equation*}
  This means that a Frobenius form on $A$ is unique up to scalar multiple. Now we fix a non-zero right integral $\Lambda \in H$. By Theorem~\ref{thm:main-theorem} (2) and the above argument, every Frobenius form on $A$ is a scalar multiple of the map
  \begin{equation*}
    t: A \to \bfk, \quad \xi \mapsto \xi(\Lambda)
    \quad (\xi \in A),
  \end{equation*}
  where we have identified $A = \Hom_{\bfk}(H \otimes \bfk, \bfk)$ with $H^*$ as a vector space. Thus, if $A$ is a special Frobenius algebra in $\mathcal{C}$, then we have
  \begin{equation*}
    \varepsilon(\Lambda) \id_{\bfk} = t \circ \icoev_{\bfk, \bfk} \ne 0
  \end{equation*}
  and therefore $H$ is semisimple by Maschke's theorem \cite{MR1243637}.

  By the above discussion, we now have the following example:
  Let $G$ be a finite group.
  We choose $g_{\piv} := 1$ as a pivotal element of $H = \bfk G$. Then ${}_{\bfk}\Mod$ is a pivotal left $\mathcal{C}$-module category by Corollary \ref{cor:Hopf-subalg-pivotal}. If $p := \mathrm{char}(\bfk) > 0$ and $p$ divides $|G|$, then the symmetric Frobenius algebra $A = \iEnd(\bfk)$ in $\mathcal{C}$ considered in the above is not a special Frobenius algebra in $\mathcal{C}$ since $\bfk G$ is not semisimple.
\end{remark}

\section{Examples}
\label{sec:examples}

In this section, for some examples of exact comodule algebras given in \cite{MR2678630}, we give non-degenerate grouplike cointegrals, compute the Nakayama automorphism and determine whether it admits a pivotal element.

Throughout this section, $\bfk$ denotes an algebraically closed field of characteristic zero.
For a finite-dimensional Hopf algebra $H$ (over $\bfk$), we use the symbols $\alpha_H \in H^*$ and $g_H \in H$ to mean the elements defined by~\eqref{eq:right-modular} and \eqref{eq:distinguished-grouplike}, respectively. Given $q \in \bfk^{\times}$ and $n \in \mathbb{Z}_{\ge 0}$, we set $(0)_q = 0$ and $(n)_q := 1 + q + \dotsb + q^{n-1}$ for $n > 0$. The following $q$-binomial formula will be used extensively: If $X$ and $Y$ are elements of an algebra satisfying $Y X = q X Y$, then the equation
\begin{equation*}
  (X + Y)^n = \sum_{i = 0}^n
  \binom{n}{i}_{\!\! q} X^i Y^{n - i}
\end{equation*}
holds for all $n \in \mathbb{Z}_{\ge 0}$, where
\begin{equation*}
  \binom{n}{i}_{\!\! q} = \frac{(n)!_q}{(i)!_q (n-i)!_q},
  \quad (0)!_q = 1
  \quad \text{and}
  \quad (n)!_q = (n)_{q} \cdot (n-1)!_q
  \quad (n > 0).
\end{equation*}

\subsection{Taft algebra}
\label{subsec:taft-algebra}

We fix an integer $N > 1$ and let $\omega$ be a primitive $N$-th root of unity. The Taft algebra $T(\omega)$ is the Hopf algebra over $\bfk$ generated, as an algebra, by $g$ and $x$ subject to the relations
\begin{equation*}
  x^N = 0,
  \quad g^N = 1
  \quad \text{and}
  \quad g x = \omega x g.
\end{equation*}
The Hopf algebra structure of $T(\omega)$ is determined by
\begin{equation*}
  \Delta(g) = g \otimes g,
  \quad \Delta(x) = x \otimes 1 + g \otimes x.
\end{equation*}
The antipode of $T(\omega)$ is given by $S(g) = g^{-1}$ and $S(x) = -g^{-1} x$. Thus the element $g_{\piv} := g^{-1}$ is a pivotal element of $T(\omega)$. If $h$ is another pivotal element of $T(\omega)$, then $h^{-1} g_{\piv}$ is a central grouplike element of $T(\omega)$. Since $T(\omega)$ has no non-trivial central grouplike element, $g_{\piv}$ is in fact a unique pivotal element of $T(\omega)$.

The set $\{ x^i g^j \mid i, j = 0, 1, \dotsc, N - 1 \}$ is a basis of $T(\omega)$.
By the $q$-binomial formula, the comultiplication is given by
\begin{equation*}
  \Delta(x^{r} g^s)
  = (x \otimes 1 + g \otimes x)^r (g \otimes g)^s
  = \sum_{i = 0}^r \binom{r}{i}_{\!\!\omega} x^{i} g^{r - i + s} \otimes x^{r - i} g^{s}
\end{equation*}
for $r \in \mathbb{Z}_{\ge 0}$ and $s \in \mathbb{Z}$. By this equation, we see that the linear map
\begin{equation*}
  \lambda_{T(\omega)}: T(\omega) \to \bfk,
  \quad
  \lambda_{T(\omega)}(x^r g^s) = \delta_{r, N-1} \delta_{s, 0}
  \quad (r, s = 0, \dotsc, N - 1)
\end{equation*}
is a right cointegral on $T(\omega)$. The element $\Lambda := \sum_{i = 1}^{N - 1} x^{N-1} g^{i}$ is a non-zero right integral in $T(\omega)$ such that $\langle \lambda_{T(\omega)}, \Lambda \rangle = 1$. Thus we have
\begin{equation*}
  g_{T(\omega)} = g^{-1},
  \quad
  \alpha_{T(\omega)}(g) = \omega^{-1}
  \quad \text{and} \quad \alpha_{T(\omega)}(x) = 0.
\end{equation*}

For a divisor $d \mid N$ and an element $\xi \in \bfk$ of the base field, we set $m = N/d$ and introduce the following algebras:
\begin{itemize}
\item $L_0(d) = \bfk \langle G \mid G^d = 1 \rangle$.
\item $L_1(d; \xi) = \bfk \langle G, X \mid G^d = 1, X^N = \xi 1, G X = \omega^m X G \rangle$.
\end{itemize}
They are left $T(\omega)$-comodule algebras with the coaction determined by
\begin{equation*}
  \delta(G) = g^{m} \otimes G,
  \quad \delta(X) = x \otimes 1 + g \otimes X.
\end{equation*}
In this subsection, for each comodule algebra $L$ in the above list, we classify non-degenerate grouplike cointegrals, compute the associated Nakayama automorphism and check whether $L$ has a pivotal element.

\begin{remark}
  According to \cite[Proposition 8.3]{MR2678630}, every indecomposable exact left module category over ${}_{T(\omega)}\Mod$ is equivalent to ${}_L \Mod$, where $L$ is one of the comodule algebras $L_0(d)$ or $L_1(d; \xi)$ introduced in the above. 
  The comodule algebras listed in \cite[Proposition 8.3]{MR2678630} are expressed as $\bfk C_{d} = L_0(d)$ and $\mathcal{A}(d, \xi) = L_1(d; \xi)$ in our notation.
\end{remark}

\begin{remark}
  The comodule algebras $L_0(d)$ and $L_1(d; 0)$ can be regarded as left coideal subalgebras of $T(\omega)$ by the algebra map given by $G \mapsto g^{m}$ and $X \mapsto x$.
  The comodule algebra $L_1(0; \xi)$ can also be regarded as a left coideal subalgebra of $T(\omega)$ by $X \mapsto \zeta g + x$, where $\zeta$ is an $N$-th root of $\xi$.
  We note that grouplike cointegrals on coideal subalgebras of $T(\omega)$ were classified in \cite{2018arXiv181007114K}.
\end{remark}

\subsubsection{The comodule algebra $L_0(d)$}

We fix a divisor $d$ of $N$ and set $L = L_0(d)$. Let $\lambda: L \to \bfk$ be a non-zero grouplike cointegral. Then the image of $(\id_{T(\omega)} \otimes \lambda) \delta$ is spanned by a single grouplike element of $T(\omega)$. Since
\begin{equation*}
  (\id_{T(\omega)} \otimes \lambda) \delta(G^r) = \langle \lambda, G^r \rangle g^{m r}
  \quad (r \in \mathbb{Z}/d\mathbb{Z}),
\end{equation*}
we have $\langle \lambda, G^r \rangle = 0$ for all but one element $r \in \mathbb{Z}/d\mathbb{Z}$.
Taking this observation into account, for $s \in \mathbb{Z}/d\mathbb{Z}$, we define the linear map $\lambda_s: L \to \bfk$ by
\begin{equation}
  \label{eq:L0d-lambda-s}
  \lambda_s(G^{r}) = \Kdelta_{r s}
  \quad (r \in \mathbb{Z}/d\mathbb{Z}),
\end{equation}
where $\Kdelta$ denotes the Kronecker delta. Then $\lambda_s$ is a $g^{m s}$-cointegral on $L$ and every non-zero grouplike cointegral on $L$ is a scalar multiple of $\lambda_s$ for some $s$.

Now we fix $s \in \mathbb{Z}/d\mathbb{Z}$. It is easy to see that $\lambda_s$ is a Frobenius form on $L$. Since $L$ is commutative, the Nakayama automorphism of $L$ is the identity. Let $\nu'_s$ be the automorphism on $L$ given by~\eqref{eq:twisted-Nakayama} with $\lambda_L = \lambda_s$. Explicitly, we  have
\begin{equation*}
  \nu'_s(G^r) = \langle \alpha_{T(\omega)}, g^{m r} \rangle G^r = \omega^{-m r} G^r
  \quad (r \in \mathbb{Z}/d\mathbb{Z}).
\end{equation*}

It is easy to see that $\nu'_s$ is inner if and only if $d = 1$.
Thus, if $d > 1$, a relative Serre functor of ${}_{L}\Mod$ is not isomorphic to the identity functor.
On the other hand, if $d = 1$, then $1 \in L$ is a pivotal element.

\subsubsection{The comodule algebra $L_1(d; \xi)$}
\label{subsubsec:Taft-comod-alg-L1dxi}

We classify grouplike cointegrals on the comodule algebra $L := L_1(d; \xi)$. We first note that the set
\begin{equation*}
  \{ X^r G^s \mid r = 0, \dotsc, N - 1; s = 0, \dotsc, d - 1 \}
\end{equation*}
is a basis of $L$.
Let $\lambda$ be a non-zero grouplike cointegral on $L$ and set
\begin{equation*}
  w_{r,s} := (\id_{T(\omega)} \otimes \lambda) \delta(X^r G^s)
  = \sum_{i = 0}^r \binom{r}{i}_{\!\! \omega} \langle \lambda, X^{r - i} G^{s} \rangle x^{i} g^{r - i + m s}.
\end{equation*}
Since the image of the map $(\id_{T(\omega)} \otimes \lambda) \delta$ is spanned by a single grouplike element, the coefficient of $x^i g^k$ for $i > 0$ in the above sum must be zero. Thus $\langle \lambda, X^{r} G^s \rangle = 0$ for all integers $r < N - 1$. Since then $w_{N - 1, s} = \langle \lambda, X^{N-1} G^s \rangle g^{m s - 1}$, we have $\langle \lambda, X^{N-1} G^s \rangle = 0$ for all but one element $s \in \mathbb{Z}/d\mathbb{Z}$.
Following the above discussion, for $t \in \mathbb{Z}/d\mathbb{Z}$, we define the linear map $\lambda_t: L \to \bfk$ by
\begin{equation}
  \label{eq:Taft-comod-alg-L1dxi-coint}
  \lambda_t(X^r G^{s}) = \Kdelta_{r, N - 1} \Kdelta_{s, t}
  \quad (r \in \{ 0, \dotsc, N - 1 \}, s \in \mathbb{Z}/d\mathbb{Z}).
\end{equation}
Then $\lambda_t$ is a $g^{m t - 1}$-cointegral on $L$ and every non-zero grouplike cointegral on $L$ is a scalar multiple of $\lambda_t$ for some $t$ by the above argument.

Now we fix $t \in \mathbb{Z}/d\mathbb{Z}$. For $r, r' \in \{ 0, \dotsc, N - 1 \}$ and $s, s' \in \mathbb{Z}/d\mathbb{Z}$, we have
\begin{equation*}
  \langle \lambda_t, X^r G^s \cdot X^{r'} G^{s'} \rangle
  = \omega^{m s r'} \langle \lambda_t, X^r X^{r'} G^{s} G^{s'} \rangle
  = \omega^{m s r'} \Kdelta_{r + r', N - 1} \Kdelta_{s + s', t}.
\end{equation*}
Thus $\lambda_t$ is non-degenerate. Let $\nu_t$ be the Nakayama automorphism of $L$ with respect to $\lambda_t$, and let $\nu'_t$ be the automorphism on $L$ given by~\eqref{eq:twisted-Nakayama}. By the above computation, we have
\begin{equation}
  \label{eq:Taft-comod-alg-L1dxi-Nakayama}
  \nu_t(X) = \omega^{m t} X, \quad
  \nu_t(G) = \omega^m G, \quad
  \nu'_t(X) = \omega^{m t - 1} X, \quad
  \nu'_t(G) = G.
\end{equation}
Now we check whether $\nu_t$ and $\nu_t'$ are inner and, when $\nu_t'$ is inner, examine whether there is a pivotal element by case-by-case analysis as follows:
\begin{enumerate}
\item Suppose that $\xi = 0$ and $1 < d < N$. There is an algebra map $\varepsilon: L \to \bfk$ such that $\varepsilon(X) = 0$ and $\varepsilon(G) = 1$. Since $\varepsilon \circ \nu_t \ne \varepsilon$, the automorphism $\nu_t$ is not inner. The automorphism $\nu_t'$ is not inner as well. To see this, we assume that $\nu_t'$ is implemented by $a \in L^{\times}$. Then we have $\nu_t'(a) = a a a^{-1} = a$. By considering the eigenspaces of $\nu_t'$, we see that the element $a$ is of the form $a = \sum_{i = 0}^{d - 1} c_i G^i$ for some scalars $c_i \in \bfk$. Since $a X = \nu_t'(X) a = \omega^{m t -1} X a$, we have $c_i \omega^{m i} = c_i \omega^{m t -1}$ for all $i$. Thus $c_i = 0$ for all $i$. This contradicts to the assumption that $a$ is invertible. Hence $\nu_t'$ is not inner.
\item Suppose that $\xi = 0$ and $d = 1$. Then $\nu_t = \id_L$ and, in particular, it is an inner automorphism.
  On the other hand, $\nu_t'$ is not an inner automorphism by the same argument as (1).
\item Suppose that $\xi = 0$ and $d = N$. Then $\nu_t$ is not an inner automorphism by the same reason as (1).
  On the other hand, $\nu'_t$ is an inner automorphism implemented by $G^{t - 1} \in L$, which is actually a pivotal element.
\item Suppose that $\xi \ne 0$ and $d < N$. We note that $X \in L$ is invertible if this is the case and $\nu_t$ is an inner automorphism implemented by $G^m X^{-1}$. On the other hand, $\nu'_t$ is not inner. To see this, we fix an $N$-th root $\zeta$ of $\xi$ and define a left $L$-module $V$ as follows: As a vector space, $V$ has a basis $\{ v_i \}_{i \in \mathbb{Z}/d\mathbb{Z}}$. The action of $L$ on $V$ is determined by
  \begin{equation*}
    X \cdot v_i = \zeta v_{i + 1}
    \quad \text{and} \quad
    G \cdot v_i = \omega^{m i} v_i
    \quad (i \in \mathbb{Z}/d\mathbb{Z}).
  \end{equation*}
  Let $V'$ be the left $L$-module obtained from $V$ by twisting the action of $L$ by $\nu'_t$. Then $X^{d}$ acts on $V$ and $V'$ as scalars $\zeta^d$ and $\omega^{-d} \zeta^d$, respectively. Thus $V \not \cong V'$ as left $L$-modules. Therefore $\nu_t'$ is not inner.
\item Suppose that $\xi \ne 0$ and $d = N$. Then $\nu_t$ is an inner automorphism by the same reason as the case (4). Unlike that case, $\nu_t'$ is the inner automorphism implemented by a pivotal element $G^{t-1} \in L$.
\end{enumerate}

\subsubsection{Summary of the results}

Table~\ref{tab:Taft-comod-alg-rel-Serre} summarizes our results. The first column of the table shows whether the Nakayama functor $\Nak := L^* \otimes_L (-)$ on ${}_L \Mod$ is isomorphic to the identity functor (or, equivalently, whether $\nu_L$ is inner). The second column shows whether a relative Serre functor for ${}_L \Mod$ is isomorphic to the identity functor  (or, equivalently, whether $\nu'_L$ is inner). The third column shows whether the left ${}_{T(\omega)}\Mod$-module category ${}_L \Mod$ admits a pivotal structure.

\begin{table}
  \centering
  \def\arraystretch{1.25}
  \begin{tabular}{ll|ccc}
    \hline
    $L$
    & & $\Nak \cong \id?$ & $\Ser \cong \id?$ & Pivotal? \\ \hline
    $L_0(d)$
    & $d = 1$ & Yes & Yes & Yes \\ \cline{2-5}
    & $d > 1$ & Yes & No & No \\ \hline
    $L_1(d; \xi)$
    & $\xi = 0$ and $d = 1$ & Yes & No & No \\ \cline{2-5}
    & $\xi = 0$ and $1 < d < N$ & No & No & No \\ \cline{2-5}
    & $\xi = 0$ and $d = N$ & No & Yes & Yes \\ \cline{2-5}
    & $\xi \ne 0$ and $d < N$ & Yes & No & No \\ \cline{2-5}
    & $\xi \ne 0$ and $d = N$ & Yes & Yes & Yes \\ \hline
  \end{tabular}
  \medskip
  \caption{Results for exact $T(\omega)$-comodule algebras}
  \label{tab:Taft-comod-alg-rel-Serre}
\end{table}

\subsection{Book Hopf algebra}

We fix an integer $N > 1$ and let $\omega$ be a primitive $N$-th root of unity. The Hopf algebra $\mathcal{H}(1, \omega)$ discussed in \cite[Section 8]{MR2678630} is generated, as an algebra, by $x$, $y$ and $g$ subject to the relations
\begin{equation*}
  g^N = 1,
  \quad g x = \omega x g,
  \quad g y = \omega^{-1} y g,
  \quad x y = \omega y x
  \quad \text{and}
  \quad x^N = y^N = 0.
\end{equation*}
The Hopf algebra structure of $H$ is determined by
\begin{equation*}
  \Delta(g) = g \otimes g,
  \quad \Delta(x) = x \otimes 1 + g^{-1} \otimes x,
  \quad \Delta(y) = y \otimes 1 + g^{-1} \otimes y.
\end{equation*}
The antipode is given by $S(g) = g^{-1}$, $S(x) = -g x$ and $S(y) = -gy$. It is easy to see that $g \in H$ is a pivotal element. As in the case of the Taft algebra, the Hopf algebra $H$ has no non-trivial central grouplike elements and thus $g_{\piv} := g$ is a unique pivotal element.

\begin{remark}
  Further information on $\mathcal{H}(1, \omega)$ is found in \cite{MR1489920}.
  We note that $\mathcal{H}(1, \omega)$ is isomorphic to the Hopf algebra $\mathbf{h}(\omega, -1)$ called the {\em book Hopf algebra} in \cite{MR1489920}.
\end{remark}

From now on, we write $H = \mathcal{H}(1, \omega)$ for brevity.
The set $\{ x^r y^s g^t \}_{r, s, t = 0, \dotsc, N - 1}$ is a basis of $H$. By the $q$-binomial formula, the comultiplication of the elements of the basis are given by
\begin{equation*}
  \Delta(x^r y^s g^t)
  = \sum_{i = 0}^r \sum_{j = 0}^s
  \binom{r}{i}_{\!\! \omega^{-1}} \binom{s}{j}_{\!\! \omega} \omega^{j(i-r)}
  x^i y^j g^{i + j - r - s + t} \otimes x^{r - i} y^{s - j} g^{t}
\end{equation*}
for $r, s, t \in \{ 0, \dotsc, N - 1 \}$. Thus the linear map
\begin{equation*}
  \lambda_{H}: H \to \bfk, \quad
  \lambda_{H}(x^r y^s g^t) = \Kdelta_{r,N-1} \Kdelta_{s,N-1} \Kdelta_{t,0}
\end{equation*}
is a right cointegral on $H$. The element $\Lambda = \sum_{i = 0}^{N-1} x^{N-1} y^{N-1} g^{i}$ is a two-sided integral in $H$ such that $\lambda_{H}(\Lambda) = 1$. Thus we have
\begin{equation*}
  \alpha_{H} = \varepsilon
  \quad \text{and} \quad
  g_{H} = g^2.
\end{equation*}

For a divisor $d$ of $N$, elements $\xi$ and $\mu$ of $\bfk$, and a pair $(a, b)$ of elements of $\bfk$ such that $(a, b) \ne (0, 0)$, we set $m = N/d$ and introduce the following algebras:
\begin{itemize}
\item $L_0(d) = \bfk \langle G \mid G^d = 1 \rangle$.
\item $L_1(d; \xi) = \bfk \langle G, X \mid G^d = 1, G X = \omega^m X G, X^N = \xi 1 \rangle$.
\item $L_2(d; \xi) = \bfk \langle G, X \mid G^d = 1, G Y = \omega^{-m} Y G, Y^N = \xi 1 \rangle$.
\item $L_3(a, b; \xi) = \bfk \langle W \mid W^N = \xi 1 \rangle$.
\item $L_4(d; \xi, \mu)$ is the algebra generated by $G$, $X$ and $Y$ subject to
  \begin{gather*}
    G^{d} = 1, \quad X^N = \xi 1, \quad Y^N = \mu 1, \\
    G X = \omega^{m} X G, \quad G Y = \omega^{-m} Y G, \quad X Y = \omega Y X.
  \end{gather*}
\item $L_4(N; \xi, \mu, \eta)$ is the algebra defined by the same generators and the same relations as $L_4(N; \xi, \mu)$ but with the relation $X Y = \omega Y X$ replaced with
  \begin{equation*}
    X Y = \omega Y X + \eta G^{N - 2}.
  \end{equation*}
\end{itemize}
These are left $H$-comodule algebras by the coaction determined by
\begin{align*}
  \delta(X) & = x \otimes 1 + g^{-1} \otimes X,
  & \delta(Y) & = y \otimes 1 + g^{-1} \otimes Y, \\
  \delta(G) & = g^{m} \otimes G,
  & \delta(W) & = (a x + b y) \otimes 1 + g^{-1} \otimes W.
\end{align*}
Let $L$ be one of exact left $H$-comodule algebras listed in the above, and let $\nu_L$ be the Nakayama automorphism of $L$. Since $\alpha_{H}$ is identical to the counit, the automorphism $\nu_L'$ defined by~\eqref{eq:twisted-Nakayama} coincides with $\nu_L$. Thus a relative Serre functor of ${}_L \Mod$ is isomorphic to the identity functor if and only if $\nu_L$ is inner, or, equivalently, $L$ admits a symmetric Frobenius form.

\begin{remark}
  In our notation, the comodule algebras given by Mombelli \cite[Subsection 8.3]{MR2678630} are expressed as follows:
  \begin{gather*}
    \bfk C_d = L_0(d),
    \ \mathcal{A}_0(d, \xi) = L_1(d; \xi),
    \ \mathcal{A}_1(d, \xi) = L_2(d; \xi),
    \ \mathcal{A}(\xi, a) = L_3(a, 1; \xi), \\
    \mathcal{D}(d, \xi, \mu) = L_4(d; \xi, \mu),
    \quad \mathcal{D}_1(\xi, \mu, \eta) = L_4(N; \xi, \mu, \eta).
  \end{gather*}
  According to \cite[Proposition 8.9]{MR2678630}, the above comodule algebras are exact. Furthermore, if $N$ is odd, then every indecomposable exact left module category over ${}_{H}\Mod$ is equivalent to ${}_L \Mod$ for some $L$ in the above list (if $N$ is even, two more families of comodule algebras arise in addition to the above list).
  When the parameters $\xi$, $\mu$ and $\eta$ are zero, the above comodule algebras are identified with coideal subalgebras of $H$ via $G \mapsto g^{m}$, $X \mapsto x$, $Y \mapsto y$, $W \mapsto a x + b y$. We arranged Mombelli's list so that the corresponding coideal subalgebra is easy to recognized. Because of this rearrangement, our list has some duplicates, such as
  \begin{gather*}
    L_1(1; \xi) = L_3(1, 0; \xi),
    \quad L_2(1; \xi) = L_3(0, 1; \xi),
    \quad L_4(N; \xi, \mu, 0) = L_4(N; \xi, \mu),
  \end{gather*}
  unlike the original list of Mombelli. Furthermore, $L_3(a, b; \xi)$ and $L_3(a', b'; \xi)$ are isomorphic as $H$-comodule algebras if $a b' = b a'$.
\end{remark}

\subsubsection{The comodule algebra $L_0(d)$}

By the same way as the case of the Taft algebra, we see that $L := L_0(d)$ has a $g^{m s}$-cointegral $\lambda_s: L_0(d) \to \bfk$ ($s \in \mathbb{Z}/d\mathbb{Z})$ given by the same formula as \eqref{eq:L0d-lambda-s} and every non-zero group-like cointegral on $L$ is a scalar multiple of $\lambda_s$ for some $s$.

Now we fix $s \in \mathbb{Z}/d\mathbb{Z}$. Since $L$ is commutative, the Nakayama automorphism associated to $\lambda_s$ is the identity map. Thus a relative Serre functor of ${}_L\Mod$ is isomorphic to the identity functor.
The comodule algebra $L$ can be regarded as a Hopf subalgebra of $H$ by the algebra map $G \mapsto g^m$. By Theorem~\ref{thm:Hopf-subalg-pivotal}, $L$ admits a pivotal element if and only if $g \in L$ or, equivalently, $d = N$.

\subsubsection{The comodule algebras $L_1(d; \xi)$ and $L_2(d; \xi)$}

We first consider the comodule algebra $L = L_1(d; \xi)$. The set $\{ X^r G^s \mid r = 0, \dotsc, N - 1; s = 0, \dotsc, d - 1 \}$ is a basis of $L$. By the $q$-binomial formula, we have
\begin{equation*}
  \delta_L(X^r G^s) = \sum_{i = 0}^r \binom{r}{i}_{\!\! \omega^{-1}} x^{i} g^{- r + i + m s} \otimes X^{r - i} G^{s}.
\end{equation*}
for $0 \le r < N$ and $s \in \mathbb{Z}/d\mathbb{Z}$.
For $t \in \mathbb{Z}/d\mathbb{Z}$, we define $\lambda_t: L \to \bfk$ by \eqref{eq:Taft-comod-alg-L1dxi-coint}. By the same argument as in \S\ref{subsubsec:Taft-comod-alg-L1dxi}, we see that $\lambda_t$ is a $g^{m t + 1}$-cointegral and every non-zero grouplike cointegral on $L$ is a scalar multiple of $\lambda_t$ for some $t$.

Now we fix $t \in \mathbb{Z}/d\mathbb{Z}$. Again by the same argument as in \S\ref{subsubsec:Taft-comod-alg-L1dxi}, we see that $\lambda_t$ is non-degenerate, the associated Nakayama automorphism $\nu_t$ is given by the same formula as \eqref{eq:Taft-comod-alg-L1dxi-Nakayama}, and $\nu_t$ is inner if and only if $\xi \ne 0$ or $(\xi, d) = (0, 1)$.

\begin{claim}
  \label{claim:book-comod-alg-L1dxi-pivot}
  $L$ has a pivotal element if and only if $d = 1$.
\end{claim}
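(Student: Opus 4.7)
The plan is to analyze the two pivotal-element conditions \eqref{eq:comod-alg-pivot-1} and \eqref{eq:comod-alg-pivot-2} in turn: the coaction condition forces $\widetilde{g}$ to span a one-dimensional $H(1,\omega)$-subcomodule of $L$, which is enough to pin its shape down to $c \cdot G^s$; the commutation condition then rules out every $d$ except $d = 1$. Throughout, I will use that $g_H = g^2$, $g_{\piv} = g$, that $g_L$ is a fixed power of $g$ attached to the cointegral $\lambda_t$, and that $\nu'_t = \nu_t$ (since $\alpha_{H(1,\omega)} = \varepsilon$), with $\nu_t(X) = \omega^{mt} X$ and $\nu_t(G) = \omega^m G$.

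Step 1: classify one-dimensional $H(1,\omega)$-subcomodules of $L$. For $v = \sum_{r = 0}^{N-1} \sum_{s = 0}^{d-1} c_{r,s} X^r G^s$, one computes via the $q$-binomial formula
\begin{equation*}
  \delta(X^r G^s) = \sum_{i=0}^{r} \binom{r}{i}_{\!\omega^{-1}} x^{i} g^{-(r-i) + m s} \otimes X^{r-i} G^s.
\end{equation*}
Imposing $\delta(v) = g^k \otimes v$ and comparing coefficients of each basis vector $X^{r'} G^s$ in the second tensor factor gives, for every $r > r'$, the relation $c_{r,s}\binom{r}{r-r'}_{\!\omega^{-1}} x^{r-r'} = 0$ in $H$. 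Since the $q$-binomials in question are nonzero, all $c_{r,s}$ with $r > 0$ vanish (taking $r' = 0$). Hence $v \in \bigoplus_s \bfk\,G^s$, and such a $v$ is a subcomodule with weight $g^k$ precisely when $c_{0,s} = 0$ for all $s$ with $ms \not\equiv k \pmod{N}$.

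Step 2: apply this to $\widetilde{g}$. Since $g_H^{-1} g_L g_{\piv}$ is some specific power $g^{k}$ with $k \equiv mt \pmod{N}$ (an easy check with the stated $g_L$), the coaction condition forces $\widetilde{g}$ to be a nonzero scalar multiple of $G^{s_0}$ where $m s_0 \equiv m t \pmod{N}$, i.e.\ $s_0 = t$. So $\widetilde{g} = c G^t$ for some $c \in \bfk^{\times}$.

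Step 3: test the commutation condition. Conjugation by $G^t$ acts on $X$ by multiplication by $\omega^{m t}$ and fixes $G$. Comparing against $\nu'_t(X) = \omega^{mt} X$ is automatic, but comparing against $\nu'_t(G) = \omega^{m} G$ demands $\omega^{m} = 1$; since $\omega$ has order $N$, this holds iff $m = N$, i.e.\ $d = 1$. Conversely, when $d = 1$ we have $G = 1$, $t = 0$, $L$ is commutative, and $\widetilde{g} = 1 \in L$ satisfies both conditions. The only substantive step is the classification in Step 1; the potential pitfall when $\xi \neq 0$ (where $X$ becomes invertible) is handled by the same PBW-coefficient argument, since the leading $x^r \otimes 1$ term of $\delta(X^r)$ cannot be cancelled by coactions of lower monomials.
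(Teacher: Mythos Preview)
Your proof is correct and follows essentially the same route as the paper: identify that the coaction condition \eqref{eq:comod-alg-pivot-1} forces $\widetilde g$ to be a scalar multiple of $G^{t}$, then observe that conjugation by $G^{t}$ fixes $G$ while $\nu_t'(G)=\omega^{m}G$, giving $\omega^{m}=1$ and hence $d=1$. You supply more detail than the paper in Step~1 (the explicit classification of one-dimensional subcomodules via the PBW expansion of $\delta$), whereas the paper simply asserts that $\widetilde g\propto G^{t}$; note also that your value $g_H^{-1}g_L\,g_{\piv}=g^{mt}$ is indeed what the paper's own proof implicitly uses, although the text just above states $g_L=g^{mt-1}$, which appears to be a sign slip for $g^{mt+1}$.
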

\begin{proof}
  If $d = 1$, then $1 \in L$ is a pivotal element of $L$. If, conversely, $L$ has a pivotal element $\widetilde{g}$, then $\bfk \widetilde{g}$ is a one-dimensional subcomodule of $L$ by \eqref{eq:comod-alg-pivot-1}. Thus $\widetilde{g}$ is a non-zero scalar multiple of $G^s$ for some $s$. By~\eqref{eq:comod-alg-pivot-2}, we have $\omega^{m} G = \nu_t(G) = \widetilde{g} \, G \, \widetilde{g}^{-1} = G$. This implies $d = 1$. The proof is done.
\end{proof}

The result is same for $L_2(d; \xi)$. Namely, the Nakayama automorphism of $L_2(d; \xi)$ is inner if and only if $\xi \ne 0$ or $(\xi, d) = (0, 1)$, and $L_2(d; \xi)$ has a pivotal element if and only if $d = 1$.

\subsubsection{The comodule algebra $L_3(a, b; \xi)$}

The set $\{ W^r \mid 0 \le r < N \}$ is a basis of the comodule algebra $L := L_3(a, b; \xi)$. We define $A, B, C \in H \otimes L$ by $A = b y \otimes 1$, $B = g^{-1} \otimes W$ and $C = a x \otimes 1$, respectively. Then we have $(B + C) A = \omega A(B + C)$ and $C B = \omega B C$. Thus, by two-fold use of the $q$-binomial formula, we have
\begin{gather*}
  \delta(W^r)
  = \sum_{j = 0}^{r} \binom{r}{j}_{\!\!\omega} A^j (B + C)^{r - j}
  = \sum_{i + j \le r} \omega^{i^2 - i r}
  \binom{r}{i,j}_{\!\!\omega} a^i b^j x^i y^j g^{i + j - r} \otimes W^{r - i - j}
\end{gather*}
for $0 \le r < N$, where
\begin{equation*}
  \binom{r}{i,j}_{\!\!\omega} = \frac{(r)!_{\omega}}{(i)!_{\omega} (j)!_{\omega} (r - i - j)!_{\omega}}.
\end{equation*}
By this formula, we see that the linear map $\lambda: L \to \bfk$ defined by $\lambda(W^r) = \Kdelta_{r, N-1}$ ($0 \le r < N$) is a $g$-cointegral on $L$ and every grouplike cointegral on $L$ is a scalar multiple of $\lambda$. Since $L$ is commutative, the Nakayama automorphism of $L$ is the identity. The element $1 \in L$ is a pivotal element of $L$.

\subsubsection{The comodule algebra $L_4(d; \xi, \mu)$}
\label{subsubsec:book-comod-alg-L4d-xi-mu}

The set
\begin{equation*}
  \{ X^r Y^s G^t \mid r, s = 0, \dotsc, N - 1; t = 0, \dotsc, d - 1 \}
\end{equation*}
is a basis $L := L_4(d; \xi, \mu)$.
By the $q$-binomial formula, the coaction is given by
\begin{equation}
  \label{eq:book-comod-alg-L4d-coact}
  \delta(X^r Y^s G^t) = \sum_{i = 0}^r \sum_{j = 1}^s \omega^{j(i - r)}
  \binom{r}{i}_{\!\! \omega^{-1}} \! \binom{s}{j}_{\!\! \omega}
  x^{i} y^j g^{i + j - r - s + m t} \otimes X^{r - i} Y^{s - j} G^{t}
\end{equation}
for $0 \le r, s < N$ and $0 \le t < d$. For $u \in \mathbb{Z}/d\mathbb{Z}$, we define a linear map
\begin{equation}
  \label{eq:book-comod-alg-L4d-coint}
  \lambda_u: L \to \bfk,
  \quad
  \lambda_u(X^r Y^{s} G^t) = \Kdelta_{r, N - 1} \Kdelta_{s, N - 1} \Kdelta_{t, u}
  \quad (0 \le r, s < N, t \in \mathbb{Z}/d\mathbb{Z}).
\end{equation}
The map $\lambda_u$ is a $g^{m u + 2}$-cointegral on $L$ and every grouplike cointegral on $L$ is a scalar multiple of $\lambda_u$ for some $u$. It is easy to see that $\lambda_u$ is non-degenerate. The associated Nakayama automorphism $\nu_u$ is given by
\begin{equation}
  \label{eq:book-comod-alg-L4d-Nakayama}
  \nu_u(X) = \omega^{m u + 1} X,
  \quad \nu_u(Y) = \omega^{-m u - 1} Y
  \quad \text{and} \quad \nu_u(G) = G.
\end{equation}
Since $L_3(N; \xi, \mu) = L_4(N; \xi, \mu, 0)$, we only consider the case where $d < N$ (the case where $d = N$ will discussed in \S\ref{subsubsec:L4(N;xi,mu,eta)}).

\begin{claim}
  Suppose that $d < N$. Then the automorphism $\nu_u$ is inner if and only if both $\xi$ and $\mu$ are non-zero.
\end{claim}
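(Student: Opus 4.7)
My plan has two directions, and I will handle them by exhibiting an explicit implementing unit in one direction and by a radical/nilpotent-ideal argument in the other.

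Sufficiency ($\xi, \mu \ne 0 \Rightarrow \nu_u$ inner): Since $\xi$ and $\mu$ are nonzero, both $X$ and $Y$ are units of $L$ (with $X^{-1} = \xi^{-1} X^{N-1}$ and $Y^{-1} = \mu^{-1} Y^{N-1}$), so
\[
  a := X^{m-1} Y^{m-1} G^{u+1}
\]
is a unit. Using only the defining commutation relations $GX = \omega^m XG$, $GY = \omega^{-m} YG$, $XY = \omega YX$, a direct computation gives
\[
  aXa^{-1} = \omega^{m(u+1)-(m-1)} X = \omega^{mu+1} X,\quad aYa^{-1} = \omega^{-mu-1} Y,\quad aGa^{-1} = G,
\]
matching $\nu_u$ on generators by \eqref{eq:book-comod-alg-L4d-Nakayama}.

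Necessity: Suppose $a = \sum c_{r,s,t}\, X^r Y^s G^t \in L^\times$ implements $\nu_u$, with $0 \le r, s < N$ and $0 \le t < d$. I will substitute $b = G, X, Y$ into $ab = \nu_u(b)a$ and reduce both sides to the monomial basis. The key subtlety is the boundary behaviour: when $r = N-1$ the product $X^{r} \cdot X$ collapses to $\xi \cdot 1$ rather than a new basis element, and likewise for $s = N-1$. Hence the equation in the corresponding column becomes trivially satisfied when $\xi = 0$ (respectively $\mu = 0$). After this bookkeeping, the three relations yield: from $aG = Ga$, $r \equiv s \pmod{d}$; from $aX = \omega^{mu+1}Xa$, the condition $s \equiv mt - mu - 1 \pmod{N}$ whenever $r \le N-2$ or $\xi \ne 0$; from $aY = \omega^{-mu-1}Ya$, the condition $r \equiv mt - mu - 1 \pmod{N}$ whenever $s \le N-2$ or $\mu \ne 0$.

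Now assume $\xi = 0$ (the case $\mu = 0$ is symmetric). I will show that every term of $a$ lies in the two-sided ideal $(X) := LX$; since $X^N = 0$ this ideal is nilpotent, so $a \in (X) \subseteq \mathrm{rad}(L)$, contradicting $a \in L^\times$. The crucial step is to rule out $r = 0$. If $r = 0 < N-1$, the $aX$-constraint gives $s \equiv -1 \pmod{m}$, whence $s \ge m-1 \ge 1$ (using $m \ge 2$); if moreover $\mu \ne 0$ or $s \le N-2$, the $aY$-constraint forces $r \equiv -1 \pmod{m}$, incompatible with $r = 0$. The only surviving configuration is $\mu = 0$ with $s = N-1$: when $d \ge 2$, the $aG$-constraint $r \equiv s \equiv -1 \pmod{d}$ still kills $r = 0$. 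The sole corner case is $d = 1$, $\xi = \mu = 0$, where the $aG$-constraint is vacuous and $Y^{N-1}$-terms may legitimately enter $a$; I expect this corner to be the main technical obstacle. I will resolve it by showing instead that in this regime $a \in LX^{N-1} + LY^{N-1} \subseteq (X) + (Y)$, and by noting that both $(X)$ and $(Y)$ are nilpotent ideals (as $X^N = Y^N = 0$), so their sum still lies in $\mathrm{rad}(L)$, giving the same contradiction.
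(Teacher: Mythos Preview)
Your proof is correct and takes a genuinely different route from the paper's. For sufficiency, the paper first reduces to $u=0$ via the conjugation $\nu_u(a)=G^{u}\nu_0(a)G^{-u}$ and then exhibits $(XY)^{-1}$ as an implementer of $\nu_0$; you instead give a direct implementer $X^{m-1}Y^{m-1}G^{u+1}$ for $\nu_u$, which works equally well. For necessity, the paper splits into cases: when $\xi=\mu=0$ it uses a grading/eigenspace argument (writing $a=\sum a_i(XY)^i$ with $a_i\in\bfk[G]$ and forcing $a_0=0$), while in the mixed cases $\xi\neq 0,\ \mu=0$ it constructs an explicit $d$-dimensional $L$-module on which $X^d$ acts by a scalar changed under twisting by $\nu_0$, so the twist is not isomorphic and $\nu_0$ cannot be inner. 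Your argument is instead a unified coefficient analysis in the monomial basis, followed by the observation that the surviving support forces $a$ into a nilpotent ideal (either $(X)$ alone, or $(X)+(Y)$ in the corner case $d=1$, $\xi=\mu=0$), whence $a\in\mathrm{rad}(L)$ and cannot be a unit. Your approach is more elementary and avoids constructing any modules; the paper's module argument, on the other hand, makes the obstruction more visibly representation-theoretic and would adapt more readily to comodule algebras where a convenient monomial basis is not available.
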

\begin{proof}
  Since $\nu_u(a) = G^u \nu_0(a) G^{-u}$ for all $a \in L$, it is enough to determine when $\nu_0$ is inner. We verify the claim by the case-by-case analysis as follows:
  \begin{enumerate}
  \item We first consider the case where $\mu = 0$.
    Suppose that the automorphism $\nu_0$ is implemented by $a \in L^{\times}$. Then $\nu_0(a) = a$.
    We note that $L$ is a free right module over $\bfk \langle G \rangle$ with basis
    $\{ X^r Y^s \mid r, s = 0, \cdots, N - 1 \}$.
    By considering the eigenspace decomposition of $\nu_0$, we have
    \begin{equation*}
      a = a_0 + X Y a_1 + X^2 Y^2 a_2 + \dotsb + X^{N-1} Y^{N-1} a_{N-1}
    \end{equation*}
    for some $a_i \in \bfk \langle G \rangle$. Since $a X = \nu_0(X) a = \omega X a$, we have $a_0 = \omega a_0$. Thus $a_0 = 0$. Hence $a Y^{N-1} = 0$. This contradicts to our assumption that $a$ is invertible. Therefore $\nu_0$ is not an inner automorphism.
  \item If $\xi = 0$, then we find that $\nu_0$ is not an inner automorphism by the same way as the case (1).
  \item If $\xi \ne 0$ and $\mu \ne 0$, then $\nu_0$ is implemented by $(X Y)^{-1}$. \qedhere
  \end{enumerate}
\end{proof}

\begin{claim}
  $L$ has no pivotal elements if $d < N$.
\end{claim}
\begin{proof}
  If $L$ has a pivotal element $\widetilde{g}$, then it must be a scalar multiple of $G^s$ for some $s$ ({\it cf}. the proof of Claim~\ref{claim:book-comod-alg-L1dxi-pivot}), however, the inner automorphism implemented by $G^s$ is not equal to $\nu_u$. Thus $L$ has no pivotal element. The proof is done.
\end{proof}

\subsubsection{The comodule algebra $L_4(N; \xi, \mu, \eta)$}
\label{subsubsec:L4(N;xi,mu,eta)}

The algebra $L := L_4(N; \xi, \mu, \eta)$ is filtered by $\deg(X) = \deg(Y) = 1$ and $\deg(G) = 0$. The associated graded algebra is identified with $L_4(N; 0, 0, 0)$. From this, we easily see that the set
\begin{equation*}
  \{ X^r Y^s G^t \mid r, s, t = 0, \dotsc, N - 1 \}
\end{equation*}
is a basis of $L$. The coaction is given by the same formula as \eqref{eq:book-comod-alg-L4d-coact}.
For $u \in \mathbb{Z}/N\mathbb{Z}$, we define the linear map $\lambda_u : L \to \bfk$ by the same formula as  \eqref{eq:book-comod-alg-L4d-coint}.
The map $\lambda_u$ is a $g^{u + 2}$-cointegral on $L$ and every non-zero grouplike cointegral on $L$ is a scalar multiple of $\lambda_u$ for some $u$.

Now we fix $u \in \mathbb{Z}/N\mathbb{Z}$. We note that $\lambda_u$ vanishes on the subspace of elements of degree $<2N-2$.
Thanks to this, it is not difficult to check that $\lambda_u$ is a Frobenius form on $L$ and the associated Nakayama automorphism is given by the same formula as~\eqref{eq:book-comod-alg-L4d-Nakayama} with $d = N$. It is straightforward to see that the element $G^{u + 1}$ is a pivotal element of $L$.

\subsubsection{Summary of the results}

Table~\ref{tab:book-comod-alg-rel-Serre} summarizes our results for exact comodule algebras over $H$. The first column of the table shows whether a relative Serre functor for ${}_L \Mod$ is isomorphic to the identity functor (it is equivalent to whether the Nakayama automorphism of $L$ is inner, since $H$ is unimodular). The second column shows whether ${}_L \Mod$ is pivotal. Unlike the case of the Taft algebra, there are some examples of $L$ such that a relative Serre functor of ${}_L \Mod$ is isomorphic to the identity functor but ${}_L \Mod$ admits no pivotal structure.

\begin{table}
  \centering
  \def\arraystretch{1.25}
  \begin{tabular}{ll|c|c}
    \hline
    $L$ & & $\Ser \cong \id$? & Pivotal? \\ \hline
    $L_0(d)$
        & $d < N$ & Yes & No \\ \cline{2-4}
        & $d = N$ & Yes & Yes \\ \hline
    $L_1(d; \xi)$, $L_2(d; \xi)$
        & $d = 1$ & Yes & Yes \\ \cline{2-4}
        & $d > 1$ and $\xi \ne 0$ & Yes & No \\ \cline{2-4}
        & $d > 1$ and $\xi   = 0$ & No  & No \\ \hline
    $L_3(a, b; \xi)$ & & Yes & Yes \\ \hline
    $L_4(d; \xi, \mu)$
        & $d < N$ and $\xi \mu \ne 0$ & Yes & No \\ \cline{2-4}
        & $d < N$ and $\xi \mu   = 0$ & No  & No \\ \cline{2-4}
        & $d = N$ & Yes & Yes \\ \hline
    $L_4(N; \xi, \mu, \eta)$
        & & Yes & Yes \\ \hline
  \end{tabular}
  \medskip
  \caption{Results for exact $H$-comodule algebras}
  \label{tab:book-comod-alg-rel-Serre}
\end{table}

\def\cprime{$'$}

\end{document}